\documentclass[hidelinks,onefignum,onetabnum]{siamart251216}

\usepackage{amsfonts}
\usepackage{graphicx}
\usepackage{color}

\newsiamremark{remark}{Remark}
\newsiamremark{hypothesis}{Hypothesis}
\crefname{hypothesis}{Hypothesis}{Hypotheses}
\newsiamthm{claim}{Claim}

\headers{Fast Gradient Algorithm for Nonconvex Optimization}{L. T. Nguyen, A. Eberhard, X. Yu, and C. Li}
\title{Fast Gradient Algorithm with Dry-like Friction and Nonmonotone Line Search for Nonconvex Optimization Problems\thanks{Submitted to the editors DATE.
\funding{This work was supported by the Australian Research Council (ARC) through Discovery Program under Grant DP200101197.}}}

\author{Lien T. Nguyen\thanks{School of Science, RMIT University, Melbourne, VIC 3000, Australia
  (\email{nguyenthuylien228@gmail.com},  \email{andy.eberhard@rmit.edu.au}).}
\and Andrew Eberhard\footnotemark[2], 
\and Xinghuo Yu\thanks{School of Engineering, RMIT University, Melbourne, VIC 3000, Australia
  (\email{xinghuo.yu@rmit.edu.au}).}
\and Chaojie Li\thanks{School of Electrical Engineering and Telecommunications, University of New South Wales, NSW 2033, Australia
(\email{chaojie.li@unsw.edu.au}).}}

\ifpdf
\hypersetup{
  pdftitle={Fast Gradient Algorithm with Dry-like Friction and Nonmonotone Line Search for Nonconvex Optimization Problems},
  pdfauthor={L. T. Nguyen, A. Eberhard, X. Yu, and C. Li}
}
\fi

\usepackage{amsmath, amssymb}
\usepackage[breakable]{tcolorbox}
\usepackage[caption=false]{subfig}
\usepackage{multirow}

\newsiamremark{example}{Example}
\newsiamremark{algo}{Algorithm}

\renewcommand\theenumi{(\roman{enumi})}
\renewcommand\theenumii{(\alph{enumii})}
\renewcommand{\labelenumi}{\rm (\roman{enumi})}

\usepackage[shortlabels]{enumitem}

\makeatletter
\renewcommand{\normalsize}{%
   \@setfontsize\normalsize\@xpt\@xiipt
   \abovedisplayskip 5\p@ \@plus2\p@ \@minus5\p@
   \abovedisplayshortskip \z@ \@plus3\p@ 
   \belowdisplayshortskip 3\p@ \@plus3\p@ \@minus3\p@ 
   \belowdisplayskip \abovedisplayskip
   \let\@listi\@listI}
\makeatother   

\newcommand{\argmax}{\ensuremath{\operatorname*{argmax}}}
\newcommand{\argmin}{\ensuremath{\operatorname*{argmin}}}

\newcommand{\dom}{\ensuremath{\operatorname{dom}}}

\newcommand{\prox}{\ensuremath{\operatorname{Prox}}}

\newcommand{\dist}{\ensuremath{\operatorname{dist}}}

\newcommand{\R}{\mathbb{R}}
\newcommand{\N}{\mathbb{N}}

\newcommand{\epi}{\operatorname{epi}}
\newcommand{\inte}{\operatorname{int}}
\newcommand{\Id}{\operatorname{Id}}

\newcounter{step}
\newcommand\step[1]{%
	\refstepcounter{step}	
	\vskip 0.25\baselineskip
	\ifx\hfuzz#1\hfuzz
		\item[~\(\triangleright\)~\textbf{Step~\arabic{step}.}]
	\else
		\item[~\(\triangleright\)~\textbf{Step~\arabic{step}}] (\texttt{#1})\textbf{.}%
	\fi
}

\allowdisplaybreaks

\begin{document}

\maketitle

\begin{abstract}
In this paper, we propose a fast gradient algorithm for the problem of minimizing a differentiable (possibly nonconvex) function in Hilbert spaces. We first extend the dry friction property for convex functions to what we call the \emph{dry-like friction property} in a nonconvex setting, and then employ a line search technique to adaptively update parameters at each iteration. Depending on the choice of parameters, the proposed algorithm exhibits subsequential convergence to a critical point or full sequential convergence to an ``approximate'' critical point of the objective function. We also establish the full sequential convergence to a critical point under the Kurdyka--{\L}ojasiewicz (KL) property of a merit function. Thanks to the parameters' flexibility, our algorithm can reduce to a number of existing inertial gradient algorithms with Hessian damping and dry friction. By exploiting variational properties of the Moreau envelope, the proposed algorithm is adapted to address weakly convex nonsmooth optimization problems. In particular, we extend the result on KL exponent for the Moreau envelope of a convex KL function to a broad class of KL functions that are not necessarily convex nor continuous. Simulation results illustrate the efficiency of our algorithm and demonstrate the potential advantages of combining dry-like friction with extrapolation and line search techniques.
\end{abstract}

\begin{keywords}
gradient algorithm,
dry friction,
inertial techniques,
KL property,
line search,
linear convergence,
Moreau envelope,
nonconvex optimization,
proximal operator.
\end{keywords}

\begin{MSCcodes}
90C26, 
41A25, 
65K05, 
65K10. 
\end{MSCcodes}

\section{Introduction}

Throughout this paper, $\mathcal{H}$ is a real Hilbert space equipped with the scalar product $\langle \cdot, \cdot\rangle$ and the associated norm $\|\cdot\|$. We consider the nonconvex optimization problem
\begin{equation}\label{eq:pb}
\min_{x \in \mathcal{H}} f(x),
\end{equation}
where $f\colon \mathcal{H} \to \mathbb{R}$ is a differentiable function. Using dynamical systems and their discretization to solve this problem has attracted much attention from very early days. One of the most popular algorithms is the gradient descent (GD). However, it is known that the GD method converges slowly
and can easily be trapped into local optima. Thus, there has been many studies on the theory and
practice of both accelerated first-order and second-order schemes, for example \cite{AA22,AAC06, ABC21, BBJ15, NYEL22, SBC14}. Among them are the algorithms that use dry friction, viscous friction, and a geometric damping driven by the second-order information of the objective. In \cite{Pol64}, Polyak introduced the \emph{heavy-ball method} with a fixed viscous damping coefficient $\lambda > 0$, namely 
\begin{equation*}
\ddot{x}(t) + \lambda \dot{x}(t) + \nabla f(x(t)) = 0.
\end{equation*}
When $f$ is convex, this method has the asymptotic convergence rate of $O(\frac{1}{t})$, which is however not better than the GD method. The next important step was taken by Su, Boyd, and Candes \cite{SBC14} with the introduction of an asymptotic vanishing damping coefficient $\frac{\lambda}{t}$, that is
\begin{equation*}
\ddot{x}(t) + \frac{\lambda}{t} \dot{x}(t) + \nabla f(x(t)) = 0.
\end{equation*}
For $\lambda \geq 3$, the trajectories of this method satisfy $f(x(t)) -\inf_{\mathcal{H}} f= O(\frac{1}{t^2})$.

Dry friction which is an important subject in mechanics, was first introduced by Adly, Attouch, and Cabot \cite{AAC06} for the continuous dynamics
\begin{equation*}
\ddot{x}(t) + \partial \phi(\dot{x}(t))  + \nabla f(x(t)) \ni 0.
\end{equation*}
While viscous dampings produce asymptotically many small oscillations, dry friction can stabilize mechanical systems in finite time. Assuming that the potential friction function $\phi$ has a sharp minimum at the origin (dry friction), they showed that the trajectories converge to equilibria in finite time. Corresponding results for partial differential equations were obtained by Amann and Diaz in \cite{AD03}.

In \cite{AA22}, Adly and Attouch analyzed the convergence properties of several algorithms obtained by temporal discretization of the differential inclusion
\begin{equation*}
\ddot{x}(t) + \gamma(t) \dot{x}(t) + \partial \phi (\dot{x}(t)) + \nabla f(x(t)) \ni 0.
\end{equation*}
Their main results concern the \emph{inertial gradient proximal algorithm with dry friction} (IPGDF)		
\begin{equation*}
x_{n+1} =x_n + h \prox_{\frac{h}{1 +h\gamma}\phi}\left(\frac{1}{h(1+h\gamma)} (x_{n} -x_{n-1}) -\frac{h}{1+h\gamma} \nabla f(x_n))\right).
\end{equation*}
The combination of viscous friction with dry friction and Hessian-driven damping has been considered by Adly and Attouch in \cite{AA20, AA21} as
\begin{equation*}
\ddot{x}(t) + \gamma \dot{x}(t) +  \partial \phi (\dot{x}(t)) + \beta \nabla^2 f(x(t)) \dot{x}(t) + \nabla f(x(t)) \ni 0
\end{equation*}	
and the discrete-time algorithm 
\begin{align*}
x_{n+1}=x_n+ h u_n, \text{~where}
\end{align*}
$u_n = \prox_{\frac{h}{1 +h\gamma} \phi}\left(\tfrac{1}{h(1+h\gamma)} (x_{n} -x_{n-1})-\tfrac{\beta}{1+ h\gamma}(\nabla f(x_n) -\nabla f(x_{n-1})) -\tfrac{h}{1+h\gamma} \nabla f(x_n))\right).$
The Hessian-driven damping has a natural connection with the strong damping property in mechanics and physics. It helps control and attenuate the oscillation effects that occur naturally with inertial systems. But each sequence $(x_n)_{n \in \N}$ generated by the algorithms in \cite{AA20, AA21, AA22, AAC06} only converges to an ``approximate'' critical point $x_{\infty}$ of $f$ in the sense that $-\nabla f(x_{\infty}) \in \partial \phi(0)$.

In \cite{ABC21}, Attouch, Bo\c{t}, and Csetnek introduced the differential inclusion
\begin{equation}\label{eq:ABC}
\ddot{x}(t) + \gamma \dot{x}(t) +  \partial \phi (\dot{x}(t) +\beta \nabla f(x)) + \beta\nabla^2 f(x(t)) \dot{x}(t) + \nabla f(x(t)) \ni 0,
\end{equation}	
where the damping term $\partial \phi (\dot{x}(t) +\beta \nabla f(x))$ involves both the velocity vector and the gradient of the potential function $f$. An advantage of considering the dry friction term in this new form is that the iterates generated by that algorithm converge towards a critical point of $f$ (a minimizer in the case where $f$ is convex).
In \cite{AAL21}, Adly, Attouch, and Le showed various discretizations of the dynamic \eqref{eq:ABC}, where the main concern is the \emph{inertial proximal algorithm with Hessian damping and dry friction} (IPAHDD-C1) 
\begin{equation*}
\begin{cases}
z_n &= \frac{1}{h}(x_n -x_{n-1}) + \beta \nabla f(x_{n-1}) \\
x_{n+1}\!\!\! &= x_n - \beta h \nabla f(x_n) + h \prox_{\frac{h}{1 +h\gamma} \phi} \left( \frac{1}{1 +\gamma h} z_n + \frac{(\gamma \beta -1)h}{1 +\gamma h} \nabla f(x_n)\right).
\end{cases}
\end{equation*}
Despite their good convergence properties, the algorithms based on the dry friction damping are not as fast as Nesterov's accelerated gradient algorithm \cite{Nes83}. Although the authors also used the combination of dry friction and Hessian-driven damping with inertial techniques, their methods follow the GD method after a finite number of steps. Moreover, the choices of parameters are relatively restrictive and may not work well for achieving acceleration. To get more flexibilities in the choices of the parameters in solving optimization problems, the line search technique is an efficient strategy. A traditional monotone line search scheme is given under the Wolfe condition or the Armijo condition, see \cite{Arm66, Wol69}. The Armijo one tries to get a small value that descends the function and the Wolfe one tries to prevent the selection of excessively small values, which can lead to slow convergence. A bisection strategy is used to get both satisfied simultaneously. In \cite{GLL86}, Grippo, Lampariello, and Lucidi originally introduced a nonmonotone line search for Newton's methods. Subsequent works, including \cite{Dai02, ZH04} have also employed nonmonotone line search techniques. Nevertheless, the authors in \cite{Dai02, ZH04} achieved $R$-linear convergence for their nonmonotone line search algorithms only when the objective function is strongly convex. Recognizing the potential of nonmonotone line search schemes to enlarge the choices of related inertial parameters and improve performance, researchers extended this technique to address nonconvex nonsmooth problems in \cite{GZLHY13, LL15, WNF09, Yan21}.

In this work, we first extend the dry friction property for convex functions to the dry-like friction property for a broad class of possibly nonconvex functions. Using functions satisfying this dry-like friction property, we propose a \emph{fast gradient algorithm with dry-like friction and nonmonotone line search} (AGDNL). It is based on a combination of Hessian damping, dry-like friction, Nesterov's accelerated technique, and nonmonotone line search. Thanks to the line search technique, the parameters can be chosen more flexibly and be updated adaptively if a certain line search criterion is not satisfied. Depending on the choice of parameters, the sequence generated by the proposed algorithm is bounded (with all strong cluster points being critical points) or strongly convergent to an ``approximate'' critical point of the objective function. Moreover, with appropriate choices of parameters, our algorithm reduces to several existing algorithms. 
To the best of our knowledge, this is the first algorithm using the combination of dry friction property and line search technique. On the other hand, the proposed algorithm also includes a sequence of perturbations and errors which is only required to be bounded instead of converging to zero. Under the KL property of a suitable merit function, we establish the convergence of the full sequence generated by our algorithm. We also adapt the proposed algorithm to address problem \eqref{eq:pb} in the case where $f\colon \mathcal{H}\to (-\infty,+\infty]$ is a weakly convex function. In addition, we extend the result on the KL exponent of the Moreau envelope of a convex KL function in finite-dimensional settings to a broad class of possibly nonconvex KL functions in Hilbert spaces. 

The remainder of the paper is organized as follows. We start Section~\ref{s:DLF} by recalling some necessary mathematical notions and then introduce the dry-like friction property for possibly nonconvex functions. In Section~\ref{s:main}, our AGDNL algorithm is developed together with its convergence properties in different scenarios. Section~\ref{s:comparison} discusses some special cases of parameters such that the line search criterion automatically holds and the proposed algorithm reduces to several existing algorithms. Based on the properties of the Moreau envelope, these results are extended to nonconvex nonsmooth functions in Section~\ref{s:nonsmooth}. Numerical simulations are conducted in Section~\ref{s:numerical simulation} to confirm the superior performance of our proposed algorithm. Finally, Section~\ref{s:conclusion} provides some concluding remarks.

\section{Preliminaries}

We denote the set of nonnegative integers by $\mathbb{N}$, the set of real numbers by $\mathbb{R}$, the set
of nonnegative real numbers by $\mathbb{R}_+$, and the set of the positive real numbers by $\mathbb{R}_{++}$. For $x\in \mathcal{H}$ and $\delta \in \mathbb{R}_+$, the closed ball in $\mathcal{H}$ centered at $x$ with radius $\delta$ is $B(x; \delta) = \{y \in \mathcal{H}: \|x - y\| \leq \delta\}$. For a subset $C$ of $\mathcal{H}$, its interior is $\inte C =\{x\in C: \exists \delta \in \mathbb{R}_{++},\ B(x; \delta) \subseteq C\}$.

Given $f\colon \mathcal{H} \to (-\infty, +\infty]$, its \emph{domain} is denoted by $\dom f =\{x \in \mathcal{H} \colon f(x)  < +\infty\}$ and its \emph{epigraph} by $\epi f =\{(x,\xi) \colon f(x) \leq \xi\}$. We say that $f$ is \emph{proper} if $\dom f \neq \varnothing$, \emph{lower semicontinuous} if $\epi f$ is a closed set, and \emph{convex} if $\epi f$ is a convex set. The function $f$ is said to be \emph{$\alpha$-weakly convex} ($\alpha \in \mathbb{R}_+$) if $f + \frac{\alpha}{2}\| \cdot\|^2$ is convex.

Let $f\colon \mathcal{H} \to (-\infty, +\infty]$ be proper and let $x \in \dom f$. The \emph{Fr\'echet subdifferential} of $f$ at $x$ is defined as
\begin{equation*}
\hat{\partial} f(x) = \left \{u \in \mathcal{H} \colon \liminf_{y \to x}\frac{f(y) -f(x) -\langle u, y-x \rangle}{\|y-x\|} \geq 0\right \}
\end{equation*}
and the \emph{limiting subdifferential} of $f$ at $x$ is defined as
\begin{equation*}
\partial f(x) = \{u \in \mathcal{H} \colon \exists x_n \to x, f(x_n) \to f(x) \text{~and~} \exists u_n \in \hat{\partial} f(x_n), u_n \to u \text{~as~} n \to +\infty\}.
\end{equation*}
For $x \notin  \dom f$, one takes $\hat{\partial} f(x)=\varnothing$ and $\partial f(x) = \varnothing$. The domain of $\partial f$ is $\dom \partial f = \{ x \in \mathcal{H}: \partial f(x) \neq \varnothing\}$. In the case when $f$ is convex, both Fr\'echet and limiting subdifferentials coincide with the \emph{convex subdifferential}, that is,
 $\partial f(x) =\{u \in \mathcal{H}: \forall y \in \mathcal{H}, \langle u, y-x\rangle \leq f(y) -f(x)\}.$
Let $\xi \in \R_{++}$. The \emph{Moreau envelope} of $f$ is the mapping $f_{\xi} \colon \mathcal{H} \to \left(-\infty, +\infty\right]$ given by
\begin{equation*}
f_{\xi}(x) = \inf_{y \in \mathcal{H}} \left(f(y) + \frac{1}{2\xi}\|x-y\|^2\right)
\end{equation*} 
and the \emph{proximal operator} of $f$ is the set-valued mapping $\prox_{\xi f} \colon  \mathcal{H} \rightrightarrows \mathcal{H}$ given by
\begin{equation*}
\prox_{\xi f}(x) =\argmin_{y \in \mathcal{H}}  \left(f(y) + \frac{1}{2\xi}\|x-y\|^2\right).
\end{equation*}
In a common abuse of notation, we consider $\prox_{\xi f}(x)$ as a point if it is a singleton.

\subsection*{Dry-like friction property}
\label{s:DLF}
We recall from \cite{AA20, AA21, AA22} that a function $\phi \colon \mathcal{H} \to \R_+$ is said to satisfy the \emph{dry friction property} if $\phi$ is convex and continuous, $\min_{x \in \mathcal{H}} \phi(x) = \phi(0) =0$, and there exists $r \in \mathbb{R}_{++}$ such that, for all $x\in \mathcal{H}$, $\phi(x) \geq r\|x\|$. We now extend this concept to nonconvex settings.

\begin{definition}[Dry-like friction property]
\label{d:DLF}
We say that a proper function $\phi\colon \mathcal{H} \to \left(-\infty, +\infty\right]$ satisfies the \emph{dry-like friction property with constant $r \in \mathbb{R}_{++}$} if for all $x \in \dom \partial \phi$, $\inf_{u \in \partial \phi(x)} \langle u, x\rangle \geq r \|x\|$. The function $\phi$ is said to satisfy the \emph{dry-like friction property} or called a \emph{dry-like friction function} if it satisfies dry-like friction property with some constant $r$.
\end{definition}

\begin{remark}
\label{r:DLF}
We have the following comments on the dry-like friction property.

\begin{enumerate}
\item\label{r:DLF_weaklycvx}
Let $\phi \colon \mathcal{H} \to \left(-\infty, +\infty\right]$ be a proper $\alpha$-weakly convex function and suppose that there exists $r \in \mathbb{R}_{++}$ such that, for all $x\in \mathcal{H}$, 
\begin{equation*}
\phi(x) \geq \frac{\alpha }{2}\|x\|^2 + r\|x\| + \phi(0).    
\end{equation*}
Then $\phi$ satisfies the dry-like friction property with constant $r$. Indeed, take arbitrary $x \in \dom \partial \phi$ and $u \in \partial \phi(x)$. By the $\alpha$-weak convexity, $\langle u, 0-x \rangle \leq  \phi(0) - \phi(x) +\frac{\alpha}{2}\|x\|^2$, and so
\begin{equation*}
\langle u, x \rangle \geq \phi(x) - \phi(0) - \frac{\alpha}{2}\|x\|^2\geq \frac{\alpha}{2}\|x\|^2 +r\|x\|+\phi(0)- \phi(0) -\frac{\alpha}{2}\|x\|^2= r\|x\|.
\end{equation*}
Therefore, $\inf_{u \in \partial \phi(x)}\langle u, x \rangle \geq r\|x\|$.
\item
If $\phi$ satisfies the dry friction property, then, by \ref{r:DLF_weaklycvx}, it satisfies the dry-like friction property. 
\item\label{r:DLF_cvx} 
Let $r \in \mathbb{R}_{++}$. According to \ref{r:DLF_weaklycvx}, $\phi =r\|\cdot\|$ satisfies the dry-like friction property with constant $r$. In this case, we also note that, for all $\tau \in \mathbb{R}_{++}$,
\begin{equation*}
\prox_{\tau\phi}(x) = \left(1- \frac{r\tau}{\max\{r\tau, \|x\|\}}\right) x = \begin{cases}
0 & \text{~if~} \|x\| \leq r\tau,\\
(\|x\| -r\tau)\frac{x}{\|x\|} &\text{~if~} \|x\| \geq r\tau.
\end{cases}
\end{equation*}
\item\label{r:DLF_weaklycvxfun}
Let $\alpha, r \in \mathbb{R}_{++}$ and $\phi: \mathcal{H} \to \left(-\infty, +\infty\right]$ be given by $\phi(x) = \max\{\frac{\alpha}{2}\|x\|^2 +r\|x\|, -\frac{\alpha}{2}\|x\|^2+2r\|x\|\}$. Then $\phi(x) + \frac{\alpha}{2}\|x\|^2 = \max\{\alpha\|x\|^2+r\|x\|, 2r\|x\|\}$ is a proper convex function, and so $\phi$ is proper and $\alpha$-weakly convex. Moreover, we have from the definition of $\phi $ that $\phi(0) =0$ and, for all $x\in \mathcal{H}$, $\phi(x) \geq \frac{\alpha }{2}\|x\|^2 + r\|x\| + \phi(0)$. By \ref{r:DLF_weaklycvx}, $\phi$ satisfies the dry-like friction property with constant $r$. We also have the closed form solution for the proximal operator of $\phi$. Specifically, for all $\tau \in (0,\frac{1}{\alpha})$, 
\begin{equation*}
\prox_{\tau\phi}(x) = \begin{cases}
0 & \text{if~} \|x\| \leq 2r\tau,\\
\frac{\|x\| -2r\tau}{1 -\alpha \tau}\frac{x}{\|x\|} &\text{if~} 2 r\tau < \|x\| \leq r\tau + \frac{r}{\alpha},\\
\frac{r}{\alpha} \frac{x}{\|x\|} & \text{if~} r\tau + \frac{r}{\alpha} < \|x\| \leq 2 r\tau + \frac{r}{\alpha}, \\
\frac{\|x\| -r\tau}{ 1+\alpha \tau }\frac{x}{\|x\|} & \text{if~} \|x\| > 2 r\tau +\frac{r}{\alpha}.
\end{cases}
\end{equation*} 
The proof of this is given in Appendix~\ref{s:appendix}.

\item \label{r:DLF_stribeckfun}
Inspired by mechanical models involving Stribeck friction,
we consider the function $\phi: \mathcal{H} \to (-\infty, +\infty]$ given by
\begin{equation*}
\phi(x) = \frac{1}{2}k_v\|x\|^2 + F_c\|x\| - v_s(F_s - F_c) \exp\left(-\frac{\|x\|}{v_s}\right),
\end{equation*}
where $k_v \in \R_+$, $v_s \in \R_{++}$, $F_c \in \R_{++}$, and $F_s \in [F_c, +\infty)$. Then 
\begin{equation}
\partial \phi(x)= \begin{cases}
k_v x + F_c \frac{x}{\|x\|} + (F_s -F_c)\exp(-\frac{\|x\|}{v_s})\frac{x}{\|x\|} &\text{~if~} x \neq 0,\notag\\
B(0; F_s) &\text{~if~} x=0
\end{cases}
\end{equation}
and we see that $\phi$ satisfies the dry-like friction property with constant $F_c$.
\end{enumerate}
\end{remark}

\begin{lemma}\label{l:dry}
Suppose that $\phi\colon \mathcal{H}\to \left(-\infty, +\infty\right]$ satisfies the dry-like friction property with constant $r$. Let $y \in \mathcal{H}$ and $w \in \prox_{\tau \phi}(y)$. Then $\langle w-y, w\rangle  + r\tau \|w\| \leq 0.$
\end{lemma}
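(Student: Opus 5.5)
The plan is to use the first-order optimality condition for the proximal subproblem and then pair it with $w$ via the dry-like friction property. Since $w \in \prox_{\tau\phi}(y)$, the point $w$ minimizes $\psi(z) := \phi(z) + \frac{1}{2\tau}\|y-z\|^2$. By Fermat's rule for the limiting subdifferential we have $0 \in \partial \psi(w)$. The quadratic term is continuously differentiable, so the exact sum rule for limiting subdifferentials under a $C^1$ perturbation applies. This gives
\begin{equation*}
0 \in \partial \phi(w) + \frac{1}{\tau}(w - y),
\end{equation*}
so $u := \frac{1}{\tau}(y - w) \in \partial\phi(w)$. In particular $w \in \dom\partial\phi$.

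For this step I would argue directly from the definitions rather than cite a calculus rule, since we are in a Hilbert space. Global minimality gives $\liminf_{z\to w}\bigl(\psi(z)-\psi(w)\bigr)/\|z-w\| \geq 0$. Expand $\frac{1}{2\tau}\|y-z\|^2 - \frac{1}{2\tau}\|y-w\|^2 = \frac{1}{\tau}\langle w-y, z-w\rangle + \frac{1}{2\tau}\|z-w\|^2$; the last term is $o(\|z-w\|)$. Substituting shows $\frac{1}{\tau}(y-w) \in \hat\partial\phi(w) \subseteq \partial\phi(w)$, using the constant sequence in the definition of the limiting subdifferential. I expect this to be the only step requiring care, and it is routine.

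Now apply Definition~\ref{d:DLF} at $x = w \in \dom\partial\phi$ with this particular $u$:
\begin{equation*}
\frac{1}{\tau}\langle y-w, w\rangle \geq \inf_{v\in\partial\phi(w)}\langle v, w\rangle \geq r\|w\|.
\end{equation*}
Multiplying by $\tau>0$ and rearranging yields $\langle w-y, w\rangle + r\tau\|w\| \leq 0$, which is the claim.
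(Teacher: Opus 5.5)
Your proof is correct and follows the paper's argument: the optimality condition for the proximal subproblem gives $\frac{1}{\tau}(y-w)\in\partial\phi(w)$, and then the dry-like friction property at $w\in\dom\partial\phi$ yields the inequality after multiplying by $\tau>0$. Your direct check that $\frac{1}{\tau}(y-w)\in\hat\partial\phi(w)\subseteq\partial\phi(w)$ justifies the optimality step, which the paper simply asserts.
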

\begin{proof}
Since $w \in \prox_{\tau \phi}(y) = \argmin_{x \in \mathcal{H}} \left(\phi(x) +\frac{1}{2 \tau}\|x-y\|^2\right)$, we have $0 \in \partial \phi(w) + \frac{1}{\tau}(w-y)$, which yields $\frac{1}{\tau}(y-w) \in \partial \phi(w)$, and so $w \in \dom \partial \phi$. By the dry-like friction property of $\phi$, $\frac{1}{\tau}\langle y -w, w\rangle \geq r\|w\|$, which completes the proof.
\end{proof}

\section{Combination of dry-like friction and line search}
\label{s:main}

In this section, we assume that $f: \mathcal{H} \to \R$ is a differentiable function whose gradient is $L$-Lipschitz continuous. Inspired by the works using dry friction in \cite{AA20,AA21, AA22, AAL21, ABC21} and the works using nonmonotone line search in \cite{Dai02,  GZLHY13,GLL86, WNF09,  Yan21, ZH04} to enlarge the choices of the parameters, we propose the AGDNL for solving problem \eqref{eq:pb} with the potential function $V \colon \mathcal{H} \times \mathcal{H} \times \mathcal{H} \times \R_+ \times \R_+ \to \R $ defined by
\begin{align}\label{eq:V}
V(x,y,w,\rho,\sigma)= f(x) +\rho \|x -y\|^2 +\sigma \|w\|^2.
\end{align}
The complete framework of the proposed algorithm is presented in Algorithm~\ref{algo:AGDNL}. In this algorithm, the Lipschitz constant is not required to be known and only approximate Lipschitz constants are needed. This is very useful because, for large-scale problems, the Lipschitz constant is not always known nor always easily computable.

\begin{tcolorbox}[
	left=0pt,right=0pt,top=0pt,bottom=0pt,
	colback=blue!10!white, colframe=blue!50!white,
  	boxrule=0.2pt]
\begin{algo}[AGDNL]
\label{algo:AGDNL}
\step{}\label{step:init}
Let $\phi\colon \mathcal{H}\to \left(-\infty, +\infty\right]$ satisfy the dry-like friction property with constant $r$. 
Let $u_{-1}=x_{-1} =x_0 \in \mathcal{H}$. 
Let $\mu \in \R_{++}$, $\bar\alpha =\alpha_{-1} \in (0, \frac{1}{\mu})$,  $\bar\beta =\beta_{-1} \in \R_{+}$, and $\bar\gamma =\gamma_{-1} \in \R_{+}$. 
Let $L_{\min} \in (0, L]$, and $L_{\max} \in [L, +\infty)$. Let $\lambda_{\min} \in [0,\frac{1}{L_{\max}})$, $\lambda_{\max} \in [\lambda_{\min}, +\infty)$, $\kappa_{\min} \in (0, \bar\kappa]$, where $\bar\kappa = \frac{2(1-\mu \bar\alpha)}{\mu L_{\max}} -\frac{2(1-\mu \bar\alpha)\lambda_{\min}}{\mu}$, and $\kappa_{\max} \in [\kappa_{\min}, +\infty)$. Set 
\begin{equation*}
\bar c =\frac{2(1-\mu \bar \alpha) - \mu^2 L_{\max} \underline{\nu}}{2 \mu^2 \underline{\nu}} 
\text{~~and~~} \bar d =\frac{(1-\mu \bar\alpha)\lambda_{\min}^2 + \lambda_{\min} \mu \kappa_{\min}}{\mu^2 \underline{\nu}},    
\end{equation*}
where $\underline{\nu}=\frac{2 \lambda_{\min}(1 -\mu \bar\alpha)}{\mu^2}+ \frac{\kappa_{\min}}{\mu}$. Let $c \in [0, \bar c ]$ and $ d \in [0, \bar d]$ such that $d >0$ if $\lambda_{\min} >0$ and $c=0$. Let $L_{-1} \in [L_{\min}, L_{\max}]$, $\lambda_{-1} \in [\lambda_{\min}, \lambda_{\max}]$, $\kappa_{-1} \in [\kappa_{\min}, \kappa_{\max}]$, and set $\rho_{-1} = \frac{1}{\mu^2}(1 -\mu \bar\alpha)\frac{\gamma_{-1}}{\nu_{-1}} + \frac{L_{-1}\beta_{-1} +\gamma_{-1}}{2}$ and $\sigma_{-1} =\frac{\mu \alpha_{-1}}{2\nu_{-1}}$, where $\nu_{-1} = \frac{2\lambda_{-1}(1 -\mu \bar\alpha)}{\mu^2} +\frac{\kappa_{-1}}{\mu}$. 
Set also $w_{-1} =\frac{\lambda_{-1} \nabla f(u_{-1})}{\mu}$. Let $\delta \in \R_{++}$ and $\ell_1, \ell_2, \ell_3 \in (0,1)$. 
Let $N \in \N$ and set $n=0$.

\step{}\label{step:main}
Let $e_n \in \mathcal{H}$, $\theta_n \in \R$, and $\tau_n\in \mathbb{R}_+$ such that $r\tau_n \geq |\theta_n| +\|e_n\| +\delta$. 
Choose $\tilde{L}_n \in [L_{\min}, L_{\max}] $, $\tilde{\alpha}_n \in (0, \bar\alpha]$, and $\tilde{\kappa}_n \in [\kappa_{\min}, \kappa_{\max}]$. Choose $\tilde{\beta}_n \in [0, \bar\beta]$ such that $\tilde{\beta}_n  = 0$ if $c = \bar c$. 
Choose $\tilde{\gamma}_n \in [0, \bar\gamma]$ such that $\tilde{\gamma}_n =0$ if $d=\bar d$ or $c=\bar c$ or $c=0$.
Choose $\tilde{\lambda}_n \in [\lambda_{\min}, \lambda_{\max}]$ such that $\tilde{\lambda}_n  =0$ if $\lambda_{\min} =0$. Set $\beta_n =\tilde{\beta}_n$, $\gamma_n =\tilde{\gamma}_n$, $L_n =\tilde{L}_n$, $\alpha_n =\tilde{\alpha}_n$, $\lambda_n =\tilde{\lambda}_n$, and $\kappa_n = \tilde{\kappa}_n$.
\begin{enumerate}[leftmargin=2em]
\item\label{step:main_comp}
Compute 
\begin{equation*}
\small
\begin{cases}
u_n = x_n + \beta_n(x_n - x_{n-1}),\\ 
v_n = x_n + \gamma_n (x_n -x_{n-1}),\\
w_n \in \prox_{\tau_n \phi}\left(\alpha_n(x_n -v_{n-1}+\lambda_{n-1}\nabla f(u_{n-1}))-\theta_n \frac{\nabla f(u_n)}{\|\nabla f(u_n)\|} -\kappa_n \nabla f(u_n)+ e_n\right ),\\
z_{n+1} = v_n- \lambda_n\nabla f(u_n) +\mu w_n.
\end{cases}
\end{equation*}
\item 
\label{step:main_line}
Set $\rho_n = \frac{1}{\mu^2}(1 -\mu \bar\alpha)\frac{\gamma_n}{\nu_n} + \frac{ L_n\beta_n +\gamma_n}{2 }$ and $\sigma_n =\frac{\mu \alpha_n}{2 \nu_n}$, where $\nu_n = \frac{2 \lambda_n(1 -\mu \bar\alpha)}{\mu^2}+ \frac{\kappa_n }{\mu}$. If
\begin{align}\label{eq:linesearch}
V(z_{n+1}, x_n, w_n,\rho_n, \sigma_n) +c \|z_{n+1}-x_n\|^2 + d \|\nabla f(u_n)\|^2 +\frac{\delta}{\underline{\nu}}\| w_n\|\notag\\
\leq \max_{[n-N]_+ \leq k \leq n}V(x_k, x_{k-1}, w_{k-1},\rho_{k-1}, \sigma_{k-1}),
\end{align}
then go to Step~\ref{step:last}.
\item\label{step:main_mult}
Set $L_n \leftarrow \min\{L_n/\ell_1, L_{\max}\}$, $ \beta_n \leftarrow \ell_{2}\beta_n $,  $ \gamma_n \leftarrow \ell_{2}\gamma_n $, $\alpha_n \leftarrow \ell_{2}\alpha_n $, $\lambda_n \leftarrow \max\{\ell_3 \lambda_n, \lambda_{\min}\}$, $\kappa_n \leftarrow \max\{\ell_3 \kappa_n, \kappa_{\min}\}$, and go to Step~\ref{step:main}\ref{step:main_comp}. 
\end{enumerate}

\step{}\label{step:last}
Set $x_{n+1} \leftarrow z_{n+1}$. If a termination criterion is not met, then let $n\leftarrow n+1$ and go to Step~\ref{step:main}.
\end{algo}
\end{tcolorbox}
\begin{remark}[Simplified AGDNL]
\label{r:lambda0}
In AGDNL algorithm, if $\lambda_{\min} =0$ and $\bar \gamma=0$ then Step~\ref{step:main} reduces to

\newcounter{st}
\renewcommand\thest{\arabic{st}'}
\setcounter{st}{1}
\refstepcounter{st}
\item[~\(\triangleright\)~\textbf{Step~\thest.}]\label{step:main'}
Let $e_n \in \mathcal{H}$, $\theta_n \in \R$, and $\tau_n\in \mathbb{R}_+$ such that $r\tau_n \geq |\theta_n| +\|e_n\| +\delta$. 
Choose $\tilde{L}_n \in [L_{\min}, L_{\max}] $, $\tilde{\alpha}_n \in (0, \bar\alpha]$, and $\tilde{\kappa}_n \in [\kappa_{\min}, \kappa_{\max}]$. Choose $\tilde{\beta}_n \in [0, \bar\beta]$ such that $\tilde{\beta}_n  = 0$ if $c = \bar c$. 
Set $\beta_n =\tilde{\beta}_n$, $\gamma_n =\tilde{\gamma}_n$, $L_n =\tilde{L}_n$, $\alpha_n =\tilde{\alpha}_n$, $\lambda_n =\tilde{\lambda}_n$, and $\kappa_n = \tilde{\kappa}_n$.

\begin{enumerate}
\item\label{step:main_comp'}
Compute 
\begin{equation*}
\begin{cases}
u_n &= x_n + \beta_n(x_n - x_{n-1}),\\ 
w_n &\in \prox_{\tau_n \phi}\left(\alpha_n(x_n -x_{n-1})-\theta_n \frac{\nabla f(u_n)}{\|\nabla f(u_n)\|} -\kappa_n \nabla f(u_n)+ e_n\right ),\\
z_{n+1}\!\!\! &= x_n +\mu w_n.
\end{cases}
\end{equation*}
\item 
Set $\rho_n =  \frac{ L_n\beta_n }{2 }$ and $\sigma_n =\frac{\mu \alpha_n}{2 \nu_n}$, where $\nu_n =  \frac{\kappa_n }{\mu}$. If
\begin{align*}
V(z_{n+1}, x_n, w_n,\rho_n, \sigma_n) +c \|z_{n+1}-x_n\|^2 + d \|\nabla f(u_n)\|^2 + \frac{\delta}{\underline{\nu}}\| w_n\|\\
\leq \max_{[n-N]_+ \leq k \leq n}V(x_k, x_{k-1}, w_{k-1},\rho_{k-1}, \sigma_{k-1}),
\end{align*}
then go to Step~\ref{step:last}.
\item
Set $L_n \leftarrow \min\{L_n/\ell_1, L_{\max}\}$, $ \beta_n \leftarrow \ell_{2}\beta_n $, $\alpha_n \leftarrow \ell_{2}\alpha_n $,  $\kappa_n \leftarrow \max\{\ell_3 \kappa_n, \kappa_{\min}\}$, and go to Step~\ref{step:main'}\ref{step:main_comp'}.
\end{enumerate}
\end{remark}

Before analyzing the convergence of Algorithm~\ref{algo:AGDNL}, we characterize the descent property of our potential function in the following lemma.

\begin{lemma}
\label{l:descent}
Suppose that $f$ is a differentiable function whose gradient is $L$-Lipschitz continuous. Under the setting of AGDNL algorithm, for all $n \in \N$, the following hold:
\begin{enumerate}
\item\label{l:descent_wn}
$\|w_n\|^2 - \mu \alpha_n \langle w_{n-1}, w_n\rangle  +\kappa_n \langle \nabla f(u_n), w_n\rangle  +\delta \| w_n\| \leq 0$. Consequently, $\|w_n\|^2 + (\delta - k_{\max} \|\nabla f(u_n)\| - \mu  \bar \alpha\|w_{n-1}\|) \|w_n\| \leq 0$. 
\item\label{l:descent_Vn}
If $L_n \geq L$, $L_n \beta_n \leq L_{n-1} \beta_{n-1}$, $\gamma_n \leq \gamma_{n-1}$, $\frac{\gamma_n}{\nu_n} \leq \frac{\gamma_{n-1}}{\nu_{n-1}}$, and $\frac{\alpha_n}{\nu_n} \leq \frac{\alpha_{n-1}}{\nu_{n-1}}$, then
\begin{align*}
&V(z_{n+1}, x_n, w_n,\rho_n, \sigma_n) + \frac{c_n}{\nu_n}\|z_{n+1} -x_n||^2 +  \frac{d_n}{\nu_n}\|\nabla f(u_n)\|^2 + \frac{\delta}{\nu_n} \| w_n\| \\
&\leq V(x_n, x_{n-1}, w_{n-1},\rho_{n-1}, \sigma_{n-1}),
\end{align*}  
where
\begin{align}
c_n &= \frac{(1 -\mu \bar\alpha) (1 -2\gamma_n )}{\mu^2} - \frac{(2 L_n\beta_n +\gamma_n + L_n)\nu_n }{2} \label{d:cn}\\
\text{and~~}
d_{n} &= \frac{2(1- \mu \bar\alpha)(\lambda_n^2 -\gamma_n\lambda_n)-\mu\gamma_n\kappa_n+2\mu\lambda_n \kappa_n}{2\mu^2} \label{d:dn}.
\end{align}
\end{enumerate}
\end{lemma}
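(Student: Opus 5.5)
The plan is to obtain \ref{l:descent_wn} directly from Lemma~\ref{l:dry}. For \ref{l:descent_Vn}, I would combine the descent lemma for $f$ with \ref{l:descent_wn}, using an identity that rewrites the argument of the proximal step in terms of $w_{n-1}$.

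\emph{Step 1 (identity for the prox argument).} For $n\ge 1$, Step~\ref{step:last} gives $x_n=z_n=v_{n-1}-\lambda_{n-1}\nabla f(u_{n-1})+\mu w_{n-1}$. Hence $x_n-v_{n-1}+\lambda_{n-1}\nabla f(u_{n-1})=\mu w_{n-1}$. For $n=0$ the same identity should follow from the initialization: $w_{-1}=\lambda_{-1}\nabla f(u_{-1})/\mu$, with $v_{-1}$ read as $x_{-1}=x_0$. I will check this convention.

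So $w_n\in\prox_{\tau_n\phi}(y_n)$ with
\begin{equation*}
y_n=\mu\alpha_n w_{n-1}-\theta_n\frac{\nabla f(u_n)}{\|\nabla f(u_n)\|}-\kappa_n\nabla f(u_n)+e_n .
\end{equation*}
Lemma~\ref{l:dry} then yields
\begin{equation*}
\|w_n\|^2-\mu\alpha_n\langle w_{n-1},w_n\rangle+\kappa_n\langle\nabla f(u_n),w_n\rangle+\theta_n\Bigl\langle \tfrac{\nabla f(u_n)}{\|\nabla f(u_n)\|},w_n\Bigr\rangle-\langle e_n,w_n\rangle+r\tau_n\|w_n\|\le 0 .
\end{equation*}
By Cauchy--Schwarz, the $\theta_n$ and $e_n$ terms are at least $-(|\theta_n|+\|e_n\|)\|w_n\|$. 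Since $r\tau_n\ge|\theta_n|+\|e_n\|+\delta$, the first inequality of \ref{l:descent_wn} follows. The consequence follows from Cauchy--Schwarz again together with $\alpha_n\le\bar\alpha$ and $\kappa_n\le\kappa_{\max}$.

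\emph{Step 2 (descent of $f$).} Write $z_{n+1}-x_n=\gamma_n(x_n-x_{n-1})-\lambda_n\nabla f(u_n)+\mu w_n$ and $u_n-x_n=\beta_n(x_n-x_{n-1})$. Since $L_n\ge L$, I would apply the descent lemma at $u_n$, once toward $z_{n+1}$ and once toward $x_n$, and subtract:
\begin{equation*}
f(z_{n+1})\le f(x_n)+\langle\nabla f(u_n),z_{n+1}-x_n\rangle+\tfrac{L_n}{2}\|z_{n+1}-u_n\|^2+\tfrac{L_n}{2}\|x_n-u_n\|^2 .
\end{equation*}
I then split $\|z_{n+1}-u_n\|^2$ through $z_{n+1}-x_n$ and $\beta_n(x_n-x_{n-1})$. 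The resulting cross term $2L_n\beta_n\langle z_{n+1}-x_n,x_n-x_{n-1}\rangle$ would be handled by Young's inequality, which should produce the $(2L_n\beta_n+L_n)$ coefficient in $c_n$ and the $\frac{L_n\beta_n}{2}$ part of $\rho_{n-1}$.

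Expanding the inner product gives $\langle\nabla f(u_n),z_{n+1}-x_n\rangle=\gamma_n\langle\nabla f(u_n),x_n-x_{n-1}\rangle-\lambda_n\|\nabla f(u_n)\|^2+\mu\langle\nabla f(u_n),w_n\rangle$. The last term is controlled by \ref{l:descent_wn} after dividing by $\kappa_n$. That step produces $-\|w_n\|^2$, $+\mu\alpha_n\langle w_{n-1},w_n\rangle$ and $-\delta\|w_n\|$, with weight $\mu/\kappa_n$.

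\emph{Step 3 (weighting and eliminating $w_n$).} I would not rely on \ref{l:descent_wn} alone. Instead I re-express $\|w_n\|^2$ via $\mu^2\|w_n\|^2=\|z_{n+1}-x_n-\gamma_n(x_n-x_{n-1})+\lambda_n\nabla f(u_n)\|^2$, taking a convex combination of this representation and the bound from \ref{l:descent_wn}. The weights are $\frac{2\lambda_n(1-\mu\bar\alpha)}{\mu^2}$ and $\frac{\kappa_n}{\mu}$, whose sum is $\nu_n$. This explains the division by $\nu_n$ and the appearance of $(1-\mu\bar\alpha)$: after $\mu\alpha_n\langle w_{n-1},w_n\rangle\le\frac{\mu\alpha_n}{2}(\|w_{n-1}\|^2+\|w_n\|^2)$, a factor $1-\mu\bar\alpha$ of $\|w_n\|^2$ remains available.

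The $\gamma_n$ cross terms $\langle\nabla f(u_n),x_n-x_{n-1}\rangle$ and $\langle z_{n+1}-x_n,x_n-x_{n-1}\rangle$ are absorbed by Young's inequality. They contribute $-\gamma_n\lambda_n$ and $-\mu\gamma_n\kappa_n$ to $d_n$, $-2\gamma_n$ and $-\gamma_n\nu_n/2$ to $c_n$, and $\frac{(1-\mu\bar\alpha)\gamma_n}{\mu^2\nu_n}+\frac{\gamma_n}{2}$ to $\rho$. Finally, the monotonicity hypotheses $L_n\beta_n\le L_{n-1}\beta_{n-1}$, $\gamma_n\le\gamma_{n-1}$, $\gamma_n/\nu_n\le\gamma_{n-1}/\nu_{n-1}$ and $\alpha_n/\nu_n\le\alpha_{n-1}/\nu_{n-1}$ give $\rho_n\le\rho_{n-1}$ and $\sigma_n\le\sigma_{n-1}$. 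This lets the $\|x_n-x_{n-1}\|^2$ and $\|w_{n-1}\|^2$ terms be bounded by those in $V(x_n,x_{n-1},w_{n-1},\rho_{n-1},\sigma_{n-1})$.

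The main obstacle is Step 3: choosing the Young constants so that every coefficient lands exactly on $c_n$, $d_n$, $\rho_{n-1}$ and $\sigma_{n-1}$. I would first fix the convex-combination weights so that the $\|w_n\|^2$ terms cancel. Then I would tune each Young constant by matching coefficients against \eqref{d:cn}--\eqref{d:dn}.
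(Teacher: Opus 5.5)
Your part (i) is the paper's argument, and reading $v_{-1}=x_0$ at $n=0$ is the convention the paper implicitly uses. The gap is in Step 2 of part (ii): your descent estimate is too lossy to reach the stated constants. Expanding $f$ twice around $u_n$ gives
\[
f(z_{n+1})\le f(x_n)+\langle\nabla f(u_n),z_{n+1}-x_n\rangle+\tfrac{L_n}{2}\|z_{n+1}-u_n\|^2+\tfrac{L_n}{2}\|x_n-u_n\|^2 .
\]
This is valid. With $\Delta_n=x_n-x_{n-1}$ and $u_n-x_n=\beta_n\Delta_n$, the error term is $\frac{L_n}{2}\|z_{n+1}-x_n\|^2-L_n\beta_n\langle z_{n+1}-x_n,\Delta_n\rangle+L_n\beta_n^2\|\Delta_n\|^2$. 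Your plan overlooks the last term, which collects the two $\frac{L_n}{2}\beta_n^2\|\Delta_n\|^2$ pieces.

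After Young's inequality with any $\varepsilon>0$, the coefficients are $\frac{L_n}{2}(1+\beta_n\varepsilon)$ on $\|z_{n+1}-x_n\|^2$ and $\frac{L_n\beta_n}{2\varepsilon}+L_n\beta_n^2$ on $\|\Delta_n\|^2$. Matching the stated $c_n$ needs $\varepsilon\le 1$. Matching the $\frac{L_n\beta_n}{2}$ part of $\rho_n$ needs $\frac{1}{2\varepsilon}+\beta_n\le\frac12$. These are incompatible whenever $\beta_n>0$.

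The surplus $\nu_n L_n\beta_n^2\|\Delta_n\|^2$ cannot be absorbed elsewhere. If $\gamma_n=0$ and $L_n\beta_n=L_{n-1}\beta_{n-1}$, the whole $\|\Delta_n\|^2$ budget on the right is $\rho_{n-1}=\frac{L_n\beta_n}{2}$. The only other slack (from $\alpha_n\le\bar\alpha$ and from Young on $\langle w_{n-1},w_n\rangle$) sits on $w$-terms. So the coefficient matching you defer to Step 3 would fail on this route.

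The missing idea is to pay for the gradient mismatch only linearly in $\beta_n$. Split
\[
\langle\nabla f(u_n),z_{n+1}-x_n\rangle=\langle\nabla f(u_n)-\nabla f(x_n),z_{n+1}-x_n\rangle+\langle\nabla f(x_n),z_{n+1}-x_n\rangle .
\]
Bound the first term below by $-L_n\beta_n\|\Delta_n\|\|z_{n+1}-x_n\|$ (Lipschitz continuity and Cauchy--Schwarz). Bound the second with the descent lemma \emph{at $x_n$}. This yields
\[
\langle\nabla f(u_n),z_{n+1}-x_n\rangle\ge f(z_{n+1})-f(x_n)-\tfrac{L_n\beta_n}{2}\|\Delta_n\|^2-\tfrac{L_n\beta_n+L_n}{2}\|z_{n+1}-x_n\|^2 .
\]
With this replacing Step 2, your Step 3 weighting is essentially the paper's proof. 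The paper lower-bounds $\|w_n\|^2-\mu\alpha_n\langle w_{n-1},w_n\rangle+\kappa_n\langle\nabla f(u_n),w_n\rangle$ as follows:
\begin{enumerate}
\item write $\|w_n\|^2=\frac{\mu\alpha_n}{2}\|w_n\|^2+(1-\mu\alpha_n)\|w_n\|^2$ and use $1-\mu\alpha_n\ge1-\mu\bar\alpha$;
\item expand $(1-\mu\bar\alpha)\|w_n\|^2$ and $\kappa_n\langle\nabla f(u_n),w_n\rangle$ via $\mu w_n=(z_{n+1}-x_n)-\gamma_n\Delta_n+\lambda_n\nabla f(u_n)$, so the total weight on $\langle\nabla f(u_n),z_{n+1}-x_n\rangle$ is $\nu_n$;
\item apply Young with unit constants to every $\gamma_n$ cross term and drop $\gamma_n^2\|\Delta_n\|^2$.
\end{enumerate}
The coefficient of $\|\Delta_n\|^2$ then comes out exactly as $\rho_n\nu_n$.
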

\begin{proof}
\ref{l:descent_wn}:
Set $y_n = \alpha_n(x_n -v_{n-1}+\lambda_n\nabla f(u_{n-1}))-\theta_n \frac{\nabla f(u_n)}{\|\nabla f(u_n)\|}-\kappa_n \nabla f(u_n)+ e_n$. By the update of $x_n$, we have $\mu w_{n-1}= x_n - v_{n-1} +\lambda_{n-1} \nabla f(u_{n-1})$, and so
\begin{equation*}
y_n = \alpha_n\mu w_{n-1} -\theta_n \frac{\nabla f(u_n)}{\|\nabla f(u_n)\|} -\kappa_n \nabla f(u_n)+ e_n.
\end{equation*}
By Step~\ref{step:main}\ref{step:main_comp}, $w_n \in \prox_{\tau_n \phi}(y_n)$. Since $\phi$ satisfies the dry-like friction property with constant $r$, we have from Lemma~\ref{l:dry} that $\langle w_n-y_n, w_n\rangle  + r\tau_n \|w_n\| \leq 0$, which yields
\begin{equation*}
\|w_n\|^2 - \alpha_n \mu\langle w_{n-1}, w_n\rangle  + \kappa_n \langle \nabla f(u_n), w_n\rangle  + (r\tau_n - |\theta_n| - \|e_n\|)\| w_n\| \leq 0.    
\end{equation*}
Combining this with $r\tau_n \geq |\theta_n| +\|e_n\| +\delta$, we get the first conclusion, which implies the second one.

\ref{l:descent_Vn}: Set $\Delta_n = x_n - x_{n-1}$. It follows from the update of $z_{n+1}$ that
\begin{equation*}
\mu w_n=z_{n+1} -v_n +\lambda_n \nabla f(u_n) 
=(z_{n+1} -x_n)-\gamma_n \Delta_n +\lambda_n \nabla f(u_n),    
\end{equation*}
and so
\begin{align}\label{eq:57}
\mu^2\|w_n\|^2&=  \|z_{n+1} -x_n\|^2 +\gamma_n^2 \|\Delta_n\|^2 +\lambda_n^2 \|\nabla f(u_n)\|^2 - 2\gamma_n \langle z_{n+1} -x_n,  \Delta_n\rangle \notag\\
 & \quad +2  \lambda_n\langle z_{n+1} -x_n, \nabla f(u_n)\rangle -2 \gamma_n\lambda_n\langle  \Delta_n, \nabla f(u_n) \rangle \notag\\
 &\geq  \|z_{n+1} -x_n\|^2 +\gamma_n^2 \|\Delta_n\|^2 +\lambda_n^2 \|\nabla f(u_n)\|^2 - \gamma_n (\|z_{n+1}-x_n\|^2 + \|\Delta_n\|^2)  \notag\\
  & \quad +2 \lambda_n\langle z_{n+1} -x_n, \nabla f(u_n)\rangle - \gamma_n\lambda_n(\|\Delta_n\|^2 + \|\nabla f(u_n)\|^2) \notag\\
  &=  (1-\gamma_n)\|z_{n+1} -x_n\|^2 +(\gamma_n^2-\gamma_n -\gamma_n\lambda_n) \|\Delta_n\|^2 + (\lambda_n^2 -\gamma_n\lambda_n) \|\nabla f(u_n)\|^2 \notag\\
   & \quad +2 \lambda_n\langle z_{n+1} -x_n, \nabla f(u_n)\rangle.
\end{align}
Since $\nabla f$ is $L$-Lipschitz continuous, $L_n \geq L$, and $u_n = x_n + \beta_n(x_n - x_{n-1})$, we have
\begin{align}\label{eq:es2}
\langle \nabla f(u_n), z_{n+1} -x_n \rangle &= \langle \nabla f(u_n) - \nabla f(x_n), z_{n+1} - x_{n} \rangle + \langle \nabla f(x_n), z_{n+1} -x_n \rangle \notag\\
& \geq -L_n\|u_n -x_n\| \|z_{n+1} -x_n\| + f(z_{n+1}) -f(x_n) -\frac{L_n}{2}\|z_{n+1} - x_n\|^2 \notag\\
& \geq  -\frac{L_n\beta_n}{2}\|\Delta_n\|^2 -\frac{L_n\beta_n +L_n}{2}\|z_{n+1} -x_n\|^2 + f(z_{n+1}) -f(x_n).
\end{align}
Combining this with \eqref{eq:57}, we derive that
\begin{align}\label{eq:es1}
 &\|w_n\|^2 - \mu \alpha_n \langle w_{n-1}, w_n\rangle+\kappa_n \langle \nabla f(u_n), w_n\rangle \notag\\
&\geq \|w_n\|^2 - \frac{\mu \alpha_n}{2}(\|w_n\|^2 +\|w_{n-1}\|^2)+\kappa_n \langle \nabla f(u_n), w_n\rangle \notag\\
& \geq  -\frac{\mu \alpha_{n}}{2} \|w_{n-1}\|^2 +\frac{\mu \alpha_n}{2} \|w_n\|^2 +(1-\mu \bar\alpha)\|w_n\|^2\notag\\
&\quad+\frac{\kappa_n }{\mu} \langle \nabla f(u_n), z_{n+1} -x_n \rangle -\frac{\gamma_n\kappa_n }{\mu} \langle \nabla f(u_n), \Delta_n \rangle +\frac{\lambda_n \kappa_n }{\mu}\|\nabla f(u_n)\|^2\notag\\
& \geq  -\frac{\mu \alpha_{n}}{2} \|w_{n-1}\|^2  +\frac{\mu \alpha_n}{2} \|w_n\|^2 \notag\\
& \quad +\frac{1-\mu \bar\alpha }{\mu^2}\left( (1-\gamma_n)\|z_{n+1} -x_n\|^2 +(\gamma_n^2-\gamma_n -\gamma_n\lambda_n) \|\Delta_n\|^2 + (\lambda_n^2 -\gamma_n\lambda_n) \|\nabla f(u_n)\|^2  \right)\notag\\
  & \quad+ \frac{2\lambda_n(1-\mu \bar\alpha)}{\mu^2}\langle z_{n+1} -x_n, \nabla f(u_n)\rangle +\frac{\kappa_n}{\mu} \langle \nabla f(u_n), z_{n+1} -x_n \rangle  \notag\\
  &\quad -\frac{\gamma_n \kappa_n }{2\mu} (\|\nabla f(u_n)\|^2 +\|\Delta_n \|^2) +\frac{\lambda_n\kappa_n }{\mu}\|\nabla f(u_n)\|^2\notag\\
&\geq -\frac{\mu \alpha_{n}}{2} \|w_{n-1}\|^2 +\frac{\mu \alpha_n}{2} \|w_n\|^2 +\bar c_n\|z_{n+1}-x_n\|^2 -\tilde c_n\|\Delta_n\|^2+ d_n\|\nabla f(u_n)\|^2\notag\\
&\quad +\left(\frac{2\lambda_n(1-\mu \bar\alpha)}{\mu^2} +\frac{\kappa_n }{\mu}\right)( f(z_{n+1}) -f(x_n)), 
\end{align}
where the last inequality follows from \eqref{eq:es2}, $d_n$ is defined in \eqref{d:dn} and
\begin{align}
\bar c_n &=\frac{(1-\mu \bar\alpha) (1-\gamma_n)}{\mu^2} -\frac{L_n\beta_n + L_n}{2 }  \left(\frac{2\lambda_n(1-\mu \bar\alpha)}{\mu^2}+ \frac{\kappa_n}{\mu}\right), \notag\\
\tilde c_n  &=\frac{(1 -\mu \bar\alpha)\left(\gamma_n +\gamma_n\lambda_n+L_n\lambda_n \beta_n \right)  + \frac{\mu \kappa_n}{2}(L_n\beta_n +\gamma_n)}{\mu^2} \notag\\
&=\frac{(1 -\mu \bar\alpha)\gamma_n}{\mu^2} + \frac{ L_n\beta_n +\gamma_n}{2 } \left(\frac{2\lambda_n(1-\mu \bar\alpha)}{\mu^2}+ \frac{\kappa_n}{\mu}\right) =\rho_n \nu_n.\label{d:eta} 
\end{align}
It follows from \ref{l:descent_wn}, \eqref{eq:es1}, and the definition of $\nu_n$ that
\begin{align*}
&\frac{\mu \alpha_n}{2} \|w_n\|^2 -\frac{\mu \alpha_n}{2}\|w_{n-1}\|^2  +  \nu_n(f(z_{n+1}) -f(x_n))+ \bar c_n \|z_{n+1} -x_n||^2  - \rho_n \nu_n\|\Delta_{n}\|^2 \notag\\
& + d_n\|\nabla f(u_n)\|^2 + \delta \| w_n\|  \leq 0,
\end{align*}
which implies that
\begin{align}\label{eq:ADFGe_4}
 &f(z_{n+1})+ \rho_n \|z_{n+1} -x_n||^2 + \frac{\mu \alpha_n}{2\nu_n} \|w_n\|^2 + \frac{c_n}{\nu_n}\|z_{n+1} -x_n||^2 +  \frac{d_n}{\nu_n}\|\nabla f(u_n)\|^2 \notag\\
 &\hspace{3cm} + \frac{\delta}{\nu_n} \| w_n\| \leq f(x_n)+ \rho_n \|x_n -x_{n-1}||^2 + \frac{\mu \alpha_n}{2 \nu_n} \|w_{n-1}\|^2.
  \end{align}  
Here, we note that
\begin{align*}
\bar c_n-\rho_n \nu_n&= \frac{(1 -\mu \bar\alpha) (1 -2\gamma_n )}{\mu^2} - \frac{2 L_n\beta_n +\gamma_n + L_n}{2} \left(\frac{2\lambda_n(1-\mu \bar\alpha)}{\mu^2}+ \frac{\kappa_n}{\mu}\right) \\
&= \frac{(1 -\mu \bar\alpha) (1 -2\gamma_n )}{\mu^2} - \frac{(2 L_n\beta_n +\gamma_n + L_n)\nu_n}{2}  =c_n.
\end{align*}
Since $L_n \beta_n \leq L_{n-1}\beta_{n-1}$, $\gamma_n \leq \gamma_{n-1}$, and $\frac{\gamma_n}{\nu_n} \leq \frac{\gamma_{n-1}}{\nu_{n-1}}$, we have $\rho_n \leq \rho_{n-1}$. As $\frac{\alpha_n}{\nu_n} \leq \frac{\alpha_{n-1}}{\nu_{n-1}}$, using \eqref{eq:ADFGe_4} and the definition of $V$ in \eqref{eq:V}, we get the conclusion. 
\end{proof}

It can be seen from Lemma~\ref{l:descent} that the sufficient descent of $V$ is satisfied as long as $L_n$ is sufficiently large and $\gamma_n$, $\beta_n$, and $\alpha_n$ are sufficiently small. Using this result, we show in the following lemma that the line search criterion \eqref{eq:linesearch} in Algorithm~\ref{algo:AGDNL} is well defined.

\begin{lemma}[Well-definedness of the line search criterion]
Suppose that $f$ is a differentiable function whose gradient is $L$-Lipschitz continuous. Then, for all $n \in \N$, the line search criterion \eqref{eq:linesearch} in AGDNL algorithm is satisfied after a finite number of inner iterations.
\end{lemma}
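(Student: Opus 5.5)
The plan is to argue by contradiction. Suppose that for some $n$ the criterion \eqref{eq:linesearch} fails at every inner iteration. Then Step~\ref{step:main}\ref{step:main_mult} runs infinitely often, and the parameters at the $j$-th inner iteration satisfy:
\begin{itemize}
\item $L_n=\min\{\tilde L_n/\ell_1^j,L_{\max}\}$, so $L_n=L_{\max}\ge L$ after finitely many $j$;
\item $\beta_n=\ell_2^j\tilde\beta_n$, $\gamma_n=\ell_2^j\tilde\gamma_n$ and $\alpha_n=\ell_2^j\tilde\alpha_n$ all tend to $0$ geometrically;
\item $\lambda_n=\max\{\ell_3^j\tilde\lambda_n,\lambda_{\min}\}$ and $\kappa_n=\max\{\ell_3^j\tilde\kappa_n,\kappa_{\min}\}$ equal $\lambda_{\min}$ and $\kappa_{\min}$ after finitely many $j$.
\end{itemize}
Once $\lambda_n=\lambda_{\min}$ and $\kappa_n=\kappa_{\min}$, we have $\nu_n=\underline{\nu}$, so $\delta/\nu_n=\delta/\underline{\nu}$. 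It then suffices to check that, for $j$ large, the hypotheses of Lemma~\ref{l:descent}\ref{l:descent_Vn} hold and that $c_n/\nu_n\ge c$ and $d_n/\nu_n\ge d$. Granting this, Lemma~\ref{l:descent}\ref{l:descent_Vn} gives
\begin{equation*}
V(z_{n+1},x_n,w_n,\rho_n,\sigma_n)+c\|z_{n+1}-x_n\|^2+d\|\nabla f(u_n)\|^2+\tfrac{\delta}{\underline{\nu}}\|w_n\|\le V(x_n,x_{n-1},w_{n-1},\rho_{n-1},\sigma_{n-1}).
\end{equation*}
The right-hand side is at most the maximum in \eqref{eq:linesearch}, because the term $k=n$ is included. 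This contradicts the assumed failure.

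For the coefficient comparisons I would use the formulas \eqref{d:cn} and \eqref{d:dn} with $\nu_n=\underline{\nu}$, $\lambda_n=\lambda_{\min}$, $\kappa_n=\kappa_{\min}$ and $L_n=L_{\max}$.
\begin{itemize}
\item As $\beta_n,\gamma_n\to0$, $c_n/\underline{\nu}$ increases to $\frac{1-\mu\bar\alpha}{\mu^2\underline{\nu}}-\frac{L_{\max}}{2}=\bar c$. If $c<\bar c$, then $c_n/\underline{\nu}\ge c$ for $j$ large. If $c=\bar c$, the algorithm forces $\tilde\beta_n=\tilde\gamma_n=0$, so $c_n/\underline{\nu}=\bar c$ exactly.
\item In the same way, $d_n/\underline{\nu}$ tends to $\frac{(1-\mu\bar\alpha)\lambda_{\min}^2+\mu\lambda_{\min}\kappa_{\min}}{\mu^2\underline{\nu}}=\bar d$ as $\gamma_n\to0$. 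The case $d=\bar d$ is handled by $\tilde\gamma_n=0$.
\item When $\lambda_{\min}=0$ we have $\bar d=0$, hence $d=0$, and $d_n\ge0$ trivially.
\end{itemize}
The requirement $\bar\kappa\ge\kappa_{\min}>0$ is what makes $\bar c\ge0$ consistent.

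For the monotonicity hypotheses, the values $L_{n-1}\beta_{n-1}$, $\gamma_{n-1}$, $\gamma_{n-1}/\nu_{n-1}$ and $\alpha_{n-1}/\nu_{n-1}$ are fixed numbers from the previous accepted iteration. The current quantities $L_n\beta_n$, $\gamma_n$, $\gamma_n/\nu_n$ and $\alpha_n/\nu_n$ decrease geometrically to $0$, since $\nu_n\ge\underline{\nu}>0$. Whenever the previous value is strictly positive, the inequality therefore holds for $j$ large. This covers $\alpha_{n-1}$, which is always positive, and $L_n\ge L$ is already settled.

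The main obstacle is the degenerate case $\beta_{n-1}=0$ (or $\gamma_{n-1}=0$) while $\tilde\beta_n>0$ (or $\tilde\gamma_n>0$). The shrunk value never reaches $0$, so the literal monotonicity hypothesis of Lemma~\ref{l:descent}\ref{l:descent_Vn} fails. Here I would bypass the lemma's last step and work directly from \eqref{eq:ADFGe_4}, whose right-hand side involves $\rho_n\|\Delta_n\|^2$ and $\tfrac{\mu\alpha_n}{2\nu_n}\|w_{n-1}\|^2$, with $\Delta_n=x_n-x_{n-1}$ fixed. Both terms tend to $0$ as $j\to\infty$. The task is then to compare this with $V(x_n,x_{n-1},w_{n-1},\rho_{n-1},\sigma_{n-1})\ge f(x_n)$ and to extract enough slack to absorb the vanishing term $\rho_n\|\Delta_n\|^2$. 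I would look for that slack in the maximum over $k$ in \eqref{eq:linesearch}, or in the strict gap between $c_n/\nu_n$ and $c$. I expect this to require a short case analysis, trivial when $\Delta_n=0$, and it is where the argument is most delicate.
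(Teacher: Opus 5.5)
Your reduction is the same as the paper's. After finitely many reductions $L_n=L_{\max}$, $\lambda_n=\lambda_{\min}$ and $\kappa_n=\kappa_{\min}$, so $\nu_n=\underline{\nu}$; your comparison of $c_n,d_n$ with $c\underline{\nu},d\underline{\nu}$ matches the paper's, and both then invoke Lemma~\ref{l:descent}\ref{l:descent_Vn}. The step you flag is a real hole, and the paper's proof does not handle it either. It simply asserts that for large $k$ one has $L_{\max}\ell_2^k\tilde\beta_n\le L_{n-1}\beta_{n-1}$ and $\gamma_n\le\gamma_{n-1}$, which is false when $\beta_{n-1}=0<\tilde\beta_n$. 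The monotonicity hypotheses of Lemma~\ref{l:descent}\ref{l:descent_Vn} are used only to get $\rho_n\le\rho_{n-1}$ and $\sigma_n\le\sigma_{n-1}$, so the true obstruction is narrower than the case you describe. It occurs only when $\rho_{n-1}=0$ (i.e.\ $\beta_{n-1}=\gamma_{n-1}=0$), $x_n\neq x_{n-1}$, and $\tilde\beta_n+\tilde\gamma_n>0$. If, say, $\beta_{n-1}=0<\gamma_{n-1}$, then $\rho_n\to0<\rho_{n-1}$, and working from \eqref{eq:ADFGe_4} as you suggest settles it.

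In the remaining case there is no slack to find, and the statement itself is false. Take $f=\frac{L}{2}\|\cdot\|^2$, $\phi=r\|\cdot\|$, $N=0$, $\lambda_{\min}>0$, $\lambda_{\max}\ge 1/L$, $\kappa_{\min}<\bar\kappa$ (so $\bar c>0$), $c\in(0,\bar c)$ with $c\le L/2$, $d=0$, $\bar\beta>0$ and $x_0\neq0$. At $n=0$ choose $\tilde\beta_0=\tilde\gamma_0=0$, $\tilde\lambda_0=1/L$, $\theta_0=0$, $e_0=0$, and $\tau_0$ so large that $w_0=0$. Since $x_{-1}=x_0$, we get $z_1=x_0-\frac{1}{L}\nabla f(x_0)=0$, and \eqref{eq:linesearch} reads $c\|x_0\|^2\le f(x_0)+\sigma_{-1}\|w_{-1}\|^2$, which holds. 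Hence $x_1=0$, $\rho_0=0$, $w_0=0$, and the right-hand side of \eqref{eq:linesearch} at $n=1$ is $V(x_1,x_0,w_0,\rho_0,\sigma_0)=0$. At $n=1$ choose $\tilde\beta_1>0$ (allowed, since $c\neq\bar c$) and $\tilde\gamma_1=0$. At every inner iteration $\beta_1=\ell_2^k\tilde\beta_1>0$ and $u_1=-\beta_1x_0$. If $w_1\neq0$, the left-hand side is at least $\frac{\delta}{\underline{\nu}}\|w_1\|>0$. If $w_1=0$, then $z_2=v_1-\lambda_1\nabla f(u_1)=\lambda_1L\beta_1x_0\neq0$ because $\lambda_1\ge\lambda_{\min}>0$, so the left-hand side is at least $f(z_2)>0$. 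Neither the maximum over $k$ (here $N=0$) nor the gap $c_n/\nu_n-c$ can rescue this, and the inner loop never terminates. The lemma therefore needs an extra rule, e.g.\ $\tilde\beta_n=\tilde\gamma_n=0$ whenever $\beta_{n-1}=\gamma_{n-1}=0$, or letting Step~\ref{step:main}\ref{step:main_mult} set $\beta_n,\gamma_n$ to $0$ after finitely many reductions. With such a rule, your argument via $\rho_n\le\rho_{n-1}$ and \eqref{eq:ADFGe_4} is complete. One minor slip: for $\lambda_{\min}=0$ one has $d_n=-\gamma_n\kappa_n/(2\mu)$. This is nonnegative not trivially, but only because $d=\bar d=0$ forces $\tilde\gamma_n=0$.
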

\begin{proof}
Fix $n \in \mathbb{N}$. Since $\ell_1 \in (0, 1)$ and $\tilde{L}_n > L_{\min}$, there exists $k_0$ such that $\ell_1^{-k_0}\tilde L_n\geq L_{\max}$. It also can be seen that there exists $k_1$ such that $\ell_3^{k_1} \tilde{\lambda}_n \leq \lambda_{\min}$ and $\ell_3^{k_1} \tilde{\kappa}_n \leq \kappa_{\min}$ since $\ell_3 \in (0,1)$. Suppose that the line search criterion \eqref{eq:linesearch} is not satisfied after $\max\{k_0, k_1\}$ inner iterations. At the $k$-th inner iteration with $k >\max\{k_0, k_1\}$, by Step~\ref{step:main}\ref{step:main_mult},
\begin{equation}\label{eq:kth}
L_n = L_{\max}, \quad \gamma_n =\ell_{2}^{k} \tilde{\gamma}_n, \quad \alpha_n =\ell_{2}^{k} \tilde{\alpha}_n, \quad \beta_n =\ell_{2}^{k} \tilde{\beta}_n, \quad \lambda_n = \lambda_{\min}, \text{~and~} \kappa_n =\kappa_{\min}.
\end{equation}
Then $\nu_n = \frac{2 \lambda_{\min}(1 -\mu \bar \alpha)}{\mu^2}+ \frac{\kappa_{\min} }{\mu} = \underline{\nu}$, and so $\frac{\gamma_n}{\nu_n} =\frac{\ell_2^k\tilde{\gamma}_n}{\nu_n} =\frac{\ell_2^k\tilde{\gamma}_n}{\underline{\nu}}$ and $\frac{\alpha_n}{\nu_n} =\frac{\ell_2^k\tilde{\alpha}_n}{\nu_n} =\frac{\ell_2^k \tilde{\alpha}_n}{\underline{\nu}}$. Since $0<\ell_2 <1$, there exists $k_2 >\max\{k_0, k_1\}$ such that after $k_2$ inner iterations, $\gamma_n \leq \gamma_{n-1}$, $L_n \beta_n = L_{\max} \ell_{2}^{k} \tilde{\beta}_n  \leq L_{n-1} \beta_{n-1}$,  $\frac{\gamma_n}{\nu_n} \leq \frac{\gamma_{n-1}}{\nu_{n-1}}$, and $\frac{\alpha_n}{\nu_n} \leq \frac{\alpha_{n-1}}{\nu_{n-1}}$.
Using Lemma~\ref{l:descent} and noting that $\nu_n =\underline{\nu}$, we get
\begin{align}
\label{eq:lsV}
&V(z_{n+1}, x_n, w_n,\rho_n, \sigma_n) + \frac{c_n}{\underline{\nu}}\|z_{n+1} -x_n||^2 +  \frac{d_n}{\underline{\nu}}\|\nabla f(u_n)\|^2 + \frac{\delta}{\underline{\nu}} \| w_n\| \notag \\
&\leq V(x_n, x_{n-1}, w_{n-1},\rho_{n-1}, \sigma_{n-1}).
\end{align}
It follows from \eqref{eq:kth} and the definition of $c_n$ in \eqref{d:cn} that
\begin{align}\label{eq:cn}
c_n&= \frac{(1 -\mu \bar\alpha) (1 -2\ell_{2}^{k} \tilde{\gamma}_n)}{\mu^2} - \frac{( 2 \ell_{2}^{k}L_{\max} \tilde{\beta}_n +\ell_{2}^{k}\tilde{\gamma}_n + L_{\max})\underline{\nu}}{2} \notag\\
&= \frac{ 1-\mu \bar\alpha}{\mu^2}- \frac{L_{\max} \underline{\nu}}{2}-\ell_2^k \left[ \frac{2\tilde{\gamma}_n (1-\mu \bar\alpha)}{\mu^2}  + \frac{2L_{\max}\tilde{\beta}_n \underline{\nu} +\tilde{\gamma}_n\underline{\nu}}{2} \right]\notag\\
&=\bar c \underline{\nu} -\ell_2^k \left[ \frac{2\tilde{\gamma}_n (1-\mu \bar\alpha)}{\mu^2}  + \frac{2L_{\max}\tilde{\beta}_n \underline{\nu} +\tilde{\gamma}_n\underline{\nu}}{2} \right].
\end{align}
Since $c \leq \bar c=\frac{2(1-\mu \bar \alpha) - \mu^2 L_{\max} \underline{\nu}}{2 \mu^2 \underline{\nu}}$, we consider two cases: $c <\bar c$ and $c = \bar c$. If $c <\bar c$, using \eqref{eq:cn} and the fact that $0< \ell_{2} <1$, there exists $k_3$ such that after $k_3$ inner iterations, $c_n \geq c\underline{\nu}$. If $c = \bar c$, by the choice of $\gamma_n$, $\beta_n$ in Step~\ref{step:init} and Step~\ref{step:main}, for all $n \in \N$, $\tilde \gamma_n =0$ and $\tilde \beta_n =0$, which together with \eqref{eq:cn} yields $c_n \geq c\underline{\nu}$.

Now, we consider $d_n$. Using definition of $d_n$ and \eqref{eq:kth}, we have 
\begin{align*}
d_{n}&= \frac{2(1- \mu \bar\alpha)(\lambda_n^2 -\ell_{2}^k \tilde{\gamma}_n\lambda_n)-\mu \ell_{2}^{k}\tilde{\gamma}_n\kappa_n+2\lambda_n \mu \kappa_n }{2\mu^2}\notag\\
& = \frac{(1-\mu \bar\alpha)\lambda_{\min}^2 + \lambda_{\min} \mu \kappa_{\min}}{\mu^2} - \ell_2^k\frac{\tilde{\gamma}_n(2(1 -\mu \bar\alpha)\lambda_{\min} +\mu\kappa_{\min})}{2\mu^2} \notag\\
& =\bar d \underline{\nu} - \ell_2^k\frac{\tilde{\gamma}_n(2(1 -\mu \bar\alpha)\lambda_{\min} +\mu\kappa_{\min})}{2\mu^2}.
\end{align*}
By Step~\ref{step:init} and Step~\ref{step:main}, $d \in [0, \bar d]$ and if $d =\bar d$, then $\tilde{\gamma}_n =0$. Since $0<\ell_2 <1$, there exists $k_4$ such that after $k_4$ inner iterations, $d_{n} \geq d \underline{\nu}$. Hence, in view of \eqref{eq:lsV}, after at most $\max\{ k_2,k_3, k_4\}$ inner iterations, the line search criterion \eqref{eq:linesearch} holds.
\end{proof}

We now establish the convergence properties of AGDNL algorithm. In particular, after finite number of iterations, Step~\ref{step:main}\ref{step:main_comp} of AGDNL algorithm reduces to
\begin{equation}\label{AGNL}
\begin{cases}
u_n &= x_n + \beta_n(x_n - x_{n-1}),\\ 
v_n & = x_n + \gamma_n (x_n -x_{n-1}),\\
z_{n+1}\!\!\! &= v_n- \lambda_n\nabla f(u_n)
\end{cases}
\end{equation}
and the line search criterion becomes
\begin{align}\label{AGNL_ls}
V(z_{n+1}, x_n, 0,\rho_n, \sigma_n) +c \|z_{n+1}-x_n\|^2 + d \|\nabla f(u_n)\|^2
\notag\\
\leq \max_{[n-N]_+ \leq k \leq n}V(x_k, x_{k-1}, 0,\rho_{k-1}, \sigma_{k-1}).    
\end{align}
We call this version as the \emph{fast gradient algorithm with nonmonotone line search} (AGNL).

For each $n \in \N$, set $m(n)\in \argmax_{[n-N]_+ \leq k \leq n}V(x_k, x_{k-1}, w_{k-1},\rho_{k-1}, \sigma_{k-1})$. The following technical lemma will be of use in our analysis.

\begin{lemma}
\label{l:limit}
Let $f \colon \mathcal{H} \to \R$ be a continuous differentiable function. Let $(x_n)_{n \in \N}$ and $(u_n)_{n \in \N}$ be sequences in $\mathcal{H}$ such that $(x_n)_{n \in \N}$ is bounded and $x_n -u_n \to 0$ as $n \to +\infty$. Then $f(x_n) -f(u_n) \to 0$ as $n \to +\infty$.
\end{lemma}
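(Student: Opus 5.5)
The argument is a mean value estimate combined with a uniform bound on the gradient along the segments $[u_n,x_n]$. One caveat comes first. In an infinite-dimensional $\mathcal{H}$, bounded sets are not compact, so mere continuity of $f$ (or of $\nabla f$) does not give uniform continuity on bounded sets. I therefore plan to use the standing assumption of this section that $\nabla f$ is $L$-Lipschitz continuous. If $\mathcal{H}$ is finite dimensional, continuity of $f$ is enough, and I treat that case at the end.

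\emph{Step 1: everything stays in a bounded set.} Let $M=\sup_{n}\|x_n\|<+\infty$. Since $x_n-u_n\to 0$, the sequence $(\|x_n-u_n\|)_{n\in\N}$ is bounded, say by $M'$. Hence $\|u_n\|\le M+M'$ for all $n$. Every point $\xi$ on the segment $[u_n,x_n]$ then satisfies $\|\xi\|\le R:=M+M'$.

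\emph{Step 2: the gradient is bounded on $B(0;R)$.} By $L$-Lipschitz continuity of $\nabla f$, for all $\xi\in B(0;R)$ we have
\begin{equation*}
\|\nabla f(\xi)\|\le \|\nabla f(0)\|+L\|\xi\|\le \|\nabla f(0)\|+LR=:G .
\end{equation*}

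\emph{Step 3: mean value estimate.} Apply the mean value inequality, or equivalently the fundamental theorem of calculus to $t\mapsto f(u_n+t(x_n-u_n))$ on $[0,1]$, which is $C^1$ because $f$ is differentiable with continuous gradient. This gives
\begin{equation*}
|f(x_n)-f(u_n)|=\Big|\int_0^1\langle \nabla f(u_n+t(x_n-u_n)),x_n-u_n\rangle\,dt\Big|\le G\|x_n-u_n\|,
\end{equation*}
and the right-hand side tends to $0$ as $n\to+\infty$, which proves the claim.

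\emph{Finite-dimensional case.} If $\mathcal{H}$ is finite dimensional and $f$ is only continuous, the closed ball $B(0;R)$ is compact, so $f$ is uniformly continuous on it. Since $x_n,u_n\in B(0;R)$ and $\|x_n-u_n\|\to 0$, uniform continuity gives $f(x_n)-f(u_n)\to0$ directly.

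The only delicate point is Step 2: continuity alone does not bound $\nabla f$ on bounded sets in infinite dimensions. That is why I rely on the section's Lipschitz-gradient hypothesis, which holds wherever the lemma is applied, rather than on compactness.
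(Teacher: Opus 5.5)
Your proof is correct, and its skeleton is the paper's: a mean value identity along the segment $[u_n,x_n]$, followed by a uniform bound on $\nabla f$ over a bounded set containing all these segments. The difference is in how that bound is justified, and your version is the sound one.

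The paper infers $\sup_n\|\nabla f(z_n)\|<+\infty$ from boundedness of the intermediate points $z_n$ together with mere continuity of $\nabla f$. That inference relies on compactness of closed balls, so it is valid only when $\mathcal{H}$ is finite dimensional. In fact, for a function that is only continuously differentiable, the statement itself fails in infinite dimensions. On $\ell^2$ with orthonormal basis $(e_k)_{k\geq 1}$, take a smooth $\psi\colon\R\to[0,1]$ with $\psi(0)=1$ and $\psi(t)=0$ for $t\geq 1$. Let $f(x)=\sum_{k\geq 1}\psi(k^2\|x-e_k\|^2)$; this is a locally finite sum, hence $C^\infty$. With $x_n=e_n$ and $u_n=e_n+\frac{1}{n}e_1$ one has $x_n-u_n\to 0$, but $f(x_n)-f(u_n)=1$ for all $n\geq 2$.

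So your appeal to the section's standing $L$-Lipschitz assumption on $\nabla f$ is a necessity rather than a convenience. That assumption is in force wherever the lemma is used: in Theorem~\ref{t:subseq} and, through $f_\xi$, in Section~\ref{s:nonsmooth}. Your finite-dimensional variant, via uniform continuity of $f$ on a compact ball, is also correct and does not even need differentiability.
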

\begin{proof}
By the mean value theorem, $f(x_n) -f(u_n) = \langle \nabla f(z_n), x_n -u_n \rangle$, where $z_n = x_n -\zeta_n (x_n -u_n)$ for some $\zeta_n \in [0,1]$. Then $(z_n)_{n \in \N}$ is bounded due to the assumptions on $(x_n)_{n \in \N}$ and $(u_n)_{n \in \N}$. This together with the continuity of $\nabla f$ implies the existence of $M$ such that, for all $n \in \N$, $\|\nabla f(z_n)\| \leq M$, which yields $|f(x_n) -f(u_n) | \leq M\|x_n -u_n\|$. Since $x_n -u_n \to 0$ as $n \to +\infty$, we obtain the conclusion.
\end{proof}

\begin{theorem}[Subsequential convergence and structural simplification]
\label{t:subseq}
Let $(x_n)_{n\in \mathbb{N}}$ be the sequence generated by AGDNL algorithm and $\Omega$ be the set of strong cluster points of $(x_n)_{n \in \N}$. Suppose that $f$ is a differentiable function whose gradient is $L$-Lipschitz continuous, that $f$ is bounded below, and that the set $\{x \in \mathcal{H}: f(x) \leq f(x_0) + \sigma_{-1}\|w_{-1}\|^2\}$ is bounded. Then the following hold:
\begin{enumerate}
\item\label{t:subseq_bounded}
$(x_n)_{n \in \N}$ is bounded.
\item\label{t:subseq_limit} 
As $ n \to +\infty$, $\|x_{n+1}-x_n\| \to 0$,  $\|w_n\| \to 0$, $f(x_n) \to \zeta$, and \\ $V(x_{n}, x_{n-1}, w_{n-1},\rho_{n-1}, \sigma_{n-1}) \to \zeta$ for some $\zeta \in \mathbb{R}$. Moreover, for all $\bar x \in \Omega$, $ f(\bar x) =\zeta$.  
\item\label{t:subseq_simplify} 
Suppose that $\lambda_{\min} >0$. Then $\lim_{n \to +\infty } \nabla f(x_n) =0$ and for all $\bar x \in \Omega$, $\nabla f(\bar x) =0$. Moreover, from some $N_0 \in \N$ onwards, $w_n =0$ and AGDNL algorithm becomes AGNL algorithm.
\end{enumerate}
\end{theorem}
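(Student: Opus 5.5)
The plan is to run the standard Grippo--Lampariello--Lucidi nonmonotone argument on the potential $V_n := V(x_n, x_{n-1}, w_{n-1},\rho_{n-1},\sigma_{n-1})$ and then read off the conclusions.

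\textbf{Step 1: boundedness.} The line search criterion \eqref{eq:linesearch} gives, for every $n$,
\begin{equation*}
V_{n+1} + c\|x_{n+1}-x_n\|^2 + d\|\nabla f(u_n)\|^2 + \tfrac{\delta}{\underline{\nu}}\|w_n\| \leq V_{m(n)} .
\end{equation*}
Hence $(V_{m(n)})$ is nonincreasing, since $m(n+1)$ ranges over a window that is either contained in the previous window or consists of $n+1$ alone. Consequently $f(x_n)\le V_n\le V_{m(0)}=V_0=f(x_0)+\sigma_{-1}\|w_{-1}\|^2$, because $x_{-1}=x_0$. The sublevel-set hypothesis then yields \ref{t:subseq_bounded}. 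Since $f$ is bounded below and $\rho,\sigma\ge0$, the monotone sequence $V_{m(n)}$ converges to some $\zeta$.

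\textbf{Step 2: limits.} I follow the GLL induction. For each fixed $j\ge1$ I show that
\begin{equation*}
\|x_{m(n)-j+1}-x_{m(n)-j}\|\to0, \qquad \|w_{m(n)-j}\|\to0, \qquad V_{m(n)-j+1}\to\zeta .
\end{equation*}
Apply the inequality at index $m(n)-1$: it gives $V_{m(n)}+\frac{\delta}{\underline\nu}\|w_{m(n)-1}\|\le V_{m(m(n)-1)}$. The two ends converge to $\zeta$, so $\|w_{m(n)-1}\|\to0$. I also need $x_{m(n)}-x_{m(n)-1}\to0$. When $c>0$ this follows from the same inequality. When $c=0$ I use $\mu w_n=(z_{n+1}-x_n)-\gamma_n\Delta_n+\lambda_n\nabla f(u_n)$, together with $d>0$ whenever $\lambda_{\min}>0$ (so $\nabla f(u_n)\to0$), and $\gamma_n=0$ whenever $c=0$. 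If $\lambda_{\min}=0$, then $\lambda_n=0$, so in all cases the step $x_{n+1}-x_n$ is controlled by $\|w_n\|$, $\|\nabla f(u_n)\|$ and $\gamma_n\|\Delta_n\|$ with $\gamma_n=0$.

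Next I pass from $V_{m(n)}\to\zeta$ to $V_{m(n)-1}\to\zeta$. Here I use uniform continuity of $f$ on bounded sets (Lemma~\ref{l:limit}), the fact that the parameters $\rho,\sigma$ are uniformly bounded, and the vanishing of the increments. Because $n-N-1\le m(n)-j\le n$, a finite induction over $j\le N+1$ covers all indices: every $x_{n+1}$ is reached from $x_{m(n+N+1)}$ in at most $N+1$ steps. This gives $\|x_{n+1}-x_n\|\to0$, $\|w_n\|\to0$ and $V_n\to\zeta$. The terms $\rho\|\Delta\|^2$ and $\sigma\|w\|^2$ vanish, so $f(x_n)\to\zeta$, and continuity gives $f(\bar x)=\zeta$ for every $\bar x\in\Omega$. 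This is \ref{t:subseq_limit}.

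\textbf{Step 3: part \ref{t:subseq_simplify}.} Suppose $\lambda_{\min}>0$. Then $d>0$, and the same GLL bookkeeping gives $\nabla f(u_n)\to0$. Since $u_n-x_n=\beta_n\Delta_n\to0$ and $\nabla f$ is $L$-Lipschitz, $\nabla f(x_n)\to0$, and hence $\nabla f(\bar x)=0$ on $\Omega$. For the finite-time termination of $w_n$, I use Lemma~\ref{l:descent}\ref{l:descent_wn}:
\begin{equation*}
\|w_n\|^2+\bigl(\delta-\kappa_{\max}\|\nabla f(u_n)\|-\mu\bar\alpha\|w_{n-1}\|\bigr)\|w_n\|\le0 .
\end{equation*}
Both $\nabla f(u_n)\to0$ and $w_{n-1}\to0$, so the bracket is eventually positive, which forces $w_n=0$ for $n\ge N_0$. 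With $w_{n-1}=w_n=0$, the update becomes $z_{n+1}=v_n-\lambda_n\nabla f(u_n)$, the $\sigma$-terms drop out of $V$, and the iteration is exactly AGNL.

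\textbf{Main obstacle.} I expect the delicate point to be Step 2: keeping the GLL induction rigorous with varying parameters. The terms $\rho_{n}\|\Delta\|^2$ and $\sigma_n\|w\|^2$ must vanish, which needs uniform bounds on $\rho_n$ and $\sigma_n$. These hold because $\nu_n\ge\underline\nu>0$ and $\gamma_n\le\bar\gamma$, $\beta_n\le\bar\beta$, $L_n\le L_{\max}$, $\alpha_n\le\bar\alpha$. The step increments must also be handled in the case $c=0$. The uniform continuity of $f$ on the bounded set comes from Lemma~\ref{l:limit}, since $\nabla f$ is bounded on bounded sets by the Lipschitz property.
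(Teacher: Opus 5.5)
Your overall route is the same as the paper's, and parts \ref{t:subseq_bounded}--\ref{t:subseq_limit} go through: the Grippo--Lampariello--Lucidi argument on $V_n$, the case split on $c$ for the increments, the covering of all indices via $m(n+N+1)$, and Lemma~\ref{l:descent}\ref{l:descent_wn} for the eventual vanishing of $w_n$. The genuine gap is in Step~3, where you write ``Suppose $\lambda_{\min}>0$. Then $d>0$.'' That implication is false. Step~\ref{step:init} only forces $d>0$ when $\lambda_{\min}>0$ \emph{and} $c=0$; with $c>0$ one may take $d=0$, and the paper explicitly allows this (see the case split in the proof of Theorem~\ref{t:fullseq}). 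In that configuration the line-search inequality has no gradient term, so the GLL bookkeeping gives no information on $\nabla f(u_n)$. Your derivations of $\nabla f(x_n)\to0$, of $\nabla f(\bar x)=0$ on $\Omega$, and of $w_n=0$ eventually (which needs $\nabla f(u_n)\to0$ in Lemma~\ref{l:descent}\ref{l:descent_wn}) then have no foundation.

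The repair is what the paper does, and you already have the ingredient from Step~2. Invert the update $x_{n+1}=v_n-\lambda_n\nabla f(u_n)+\mu w_n$ and use $\lambda_n\ge\lambda_{\min}$ and $\gamma_n\le\bar\gamma$:
\begin{equation*}
\lambda_{\min}\|\nabla f(u_n)\|\le\|x_{n+1}-x_n\|+\bar\gamma\|x_n-x_{n-1}\|+\mu\|w_n\| .
\end{equation*}
The right-hand side tends to $0$ by part \ref{t:subseq_limit}. This works for every admissible pair $(c,d)$, and your Lipschitz argument and finite-termination argument then finish the proof.

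Two smaller points in Step~2 need tightening. First, the bound $n-N-1\le m(n)-j\le n$ is wrong for $j$ up to $N+1$. You only need $m(n)-j\to+\infty$ for fixed $j$, together with your correct remark about $m(n+N+1)$. Second, $V_{m(n)-1}\to\zeta$ does not follow from the increments you have at that stage, because the increments inside $V_{m(n)-1}$ are the next-level ones. It follows instead from the sandwich $f(x_{m(n)-1})\le V_{m(n)-1}\le V_{m(m(n)-2)}$ (line search at index $m(n)-2$), together with $f(x_{m(n)-1})\to\zeta$. Alternatively, carry $f(x_{m(n)-j})\to\zeta$ in the induction, as the paper does.
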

\begin{proof}
\ref{t:subseq_bounded}: First, we prove by induction that, for all $n \in \N$,
\begin{equation}\label{eq:bx1}
V(x_{n}, x_{n-1}, w_{n-1},\rho_{n-1}, \sigma_{n-1}) \leq f(x_0) + \sigma_{-1}\|w_{-1}\|^2.
\end{equation}
Since $x_{-1} =x_0$, we have $V(x_{0}, x_{-1}, w_{-1},\rho_{-1},\sigma_{-1}) = f(x_0) + \sigma_{-1}\|w_{-1}\|^2$, so \eqref{eq:bx1} holds for $n=0$. Suppose that \eqref{eq:bx1} holds for all $n \leq n_0$. 
From \eqref{eq:linesearch}, we obtain that
\begin{align*}
V(x_{n_0+1}, x_{n_0}, w_{n_0},\rho_{n_0}, \sigma_{n_0}) &\leq \max_{[n_0-N]_+ \leq k \leq n_0}V(x_k, x_{k-1}, w_{k-1},\rho_{k-1}, \sigma_{k-1})\notag\\
&\leq   f(x_0) + \sigma_{-1}\|w_{-1}\|^2,
\end{align*}
and hence \eqref{eq:bx1} is satisfied for $n=n_0+1$, which completes the induction. Next, it follows from \eqref{eq:bx1} and the definition of $V$ that $f(x_n) \leq  f(x_0) + \sigma_{-1}\|w_{-1}\|^2$. The conclusion now follows from the boundedness of the set $\{x \in \mathcal{H}: f(x) \leq f(x_0) + \sigma_{-1}\|w_{-1}\|^2\}$.

\ref{t:subseq_limit}: It follows from the line search criterion \eqref{eq:linesearch} that
\begin{align}\label{eq:lsx}
&V(x_{n+1}, x_n, w_n,\rho_n, \sigma_n) +c\|x_{n+1}-x_n\|^2  + d \|\nabla f(u_n)\|^2+ \frac{\delta}{\underline{\nu}}\| w_n\|\notag\\ &\leq \max_{[n-N]_+ \leq k \leq n}V(x_k, x_{k-1}, w_{k-1},\rho_{k-1}, \sigma_{k-1})\notag\\
&= V(x_{m(n)}, x_{m(n)-1}, w_{m(n)-1},\rho_{m(n)-1}, \sigma_{m(n)-1}).
\end{align}
Therefore,
\begin{align*}
&V(x_{m(n+1)}, x_{m(n)}, w_{m(n)},\rho_{m(n)}, \sigma_{m(n)}) \notag\\
&= \max_{[n+1-N]_+ \leq k \leq n+1}V(x_k, x_{k-1}, w_{k-1},\rho_{k-1}, \sigma_{k-1})\notag\\
&=\max \left\{V(x_{n+1}, x_n, w_n,\rho_n, \sigma_n), \max_{[n+1-N]_+ \leq k \leq n}V(x_k, x_{k-1}, w_{k-1},\rho_{k-1}, \sigma_{k-1}) \right\}\notag\\
& \leq  \max_{[n-N]_+ \leq k \leq n}V(x_k, x_{k-1}, w_{k-1},\rho_{k-1}, \sigma_{k-1}) \notag\\
&=V(x_{m(n)}, x_{m(n)-1}, w_{m(n)-1},\rho_{m(n)-1}, \sigma_{m(n)-1}),
\end{align*}
which implies that the sequence $(V(x_{m(n+1)}, x_{m(n)}, w_{m(n)},\rho_{m(n)}, \sigma_{m(n)}))_{n\in \N}$ is nonincreasing. On the other hand, this sequence is bounded below due to the bounded below property of $f$ and the definition of $V$. Thus, there exists $\zeta$ such that
\begin{equation}\label{eq:limV}
\zeta =\lim_{n \to +\infty}V(x_{m(n)}, x_{m(n)-1}, w_{m(n)-1},\rho_{m(n)-1}, \sigma_{m(n)-1}).
\end{equation} 
In \eqref{eq:lsx}, replacing $n$ by $m(n)-1$ yields
\begin{align*}
&V(x_{m(n)}, x_{m(n)-1}, w_{m(n)-1},\rho_{m(n)-1}, \sigma_{m(n)-1}) +c\|x_{m(n)}-x_{m(n)-1}\|^2 \notag\\ 
&+d \|\nabla f(u_{m(n)-1})\|^2 +\frac{\delta}{\underline{\nu}}\| w_{m(n)-1}\| \notag\\
&\leq V(x_{m(m(n)-1)}, x_{m(m(n)-1)-1}, w_{m(m(n)-1)-1},\rho_{m(m(n)-1)-1}, \sigma_{m(m(n)-1)-1}).
\end{align*}
Letting $n \to +\infty$ and noting that $m(n) \geq [n-N]_{+}$, we get $ \lim_{n \to +\infty}\|w_{m(n)-1} \| = 0$. 
If $c >0 $, we also have $\lim_{n \to +\infty}\|x_{m(n)}-x_{m(n)-1}\|~=~0$. If $c=0$ and $d >0$, then $\lim_{n \to +\infty}\|\nabla f(u_{m(n)-1}) \| = 0$, which implies that 
\begin{equation*}
 |x_{m(n)} - x_{m(n)-1}\| =\|-\lambda_{m(n)-1}\nabla f(u_{m(n)-1}) + \mu w_{m(n)-1} \| \to 0 \text{~as~} n \to +\infty   
\end{equation*}
If $c=d=0$, then $\lambda_{\min} =0$ since  $d >0$ if $\lambda_{\min} >0$ and $c=0$. By Step~\ref{step:main}, for all $n \in \N$, $\lambda_n = \gamma_n =0$, and so $\lim_{n \to +\infty}\|x_{m(n)}-x_{m(n)-1}\| =\lim_{n \to +\infty}\mu \|w_{m(n)-1} \| =0$. Therefore, in any cases,
\begin{equation}\label{eq:indu1}
\lim_{n \to +\infty}\|x_{m(n)}-x_{m(n)-1}\| = 0 \text{~and~} \lim_{n \to +\infty}\|w_{m(n)-1} \| = 0.
\end{equation}
By the definitions of $\rho_n$ and $\sigma_n$, for all $n \in \N$,
$0\leq \rho_n \leq  \frac{1}{\mu^2}(1 -\mu \bar\alpha)\frac{\bar\gamma}{\underline{\nu}} + \frac{ L_{\max}\bar\beta +\bar\gamma}{2 }$ and $ 0\leq  \sigma_n \leq\frac{\mu \alpha_{\max}}{2 \underline{\nu}}.$ Combining with \eqref{eq:limV} and \eqref{eq:indu1}, we derive that
\begin{align*}
f(x_{m(n)})&=V(x_{m(n)}, x_{m(n)-1}, w_{m(n)-1},\rho_{m(n)-1}, \sigma_{m(n)-1}) \notag\\
&\quad - \rho_{m(n)-1}\|x_{m(n)}-x_{m(n)-1}\|^2 - \sigma_{m(n)-1} \| w_{m(n)-1}\|^2
\end{align*}
also tends to $\zeta$ as $n \to +\infty$. Since $(x_n)_{n\in \N}$ is bounded and $\lim_{n \to +\infty}\|x_{m(n)}-x_{m(n)-1}\| = 0$, Lemma~\ref{l:limit} implies that 
\begin{equation}\label{eq:indu2}
\lim_{n \to +\infty}f(x_{m(n)-1})=\lim_{n \to +\infty}f(x_{m(n)}) = \zeta.
\end{equation}
Now, we prove by induction that for all $k \in [1, m(n)]$,
\begin{equation}\label{eq:indu}
\lim_{n \to +\infty}\|x_{m(n)-k+1}-x_{m(n)-k}\|=0 , \lim_{n \to +\infty} \|w_{m(n)-k}\|=0, \text{~and~} \lim_{n \to +\infty} f(x_{m(n)-k}) = \zeta.
\end{equation}
It follows from \eqref{eq:indu1} and \eqref{eq:indu2} that \eqref{eq:indu} holds for $k=1$. Suppose that \eqref{eq:indu} holds for $k=k_0$. Replacing $n$ by $ m(n)-k_0-1:= n_m$ in \eqref{eq:lsx} yields
\begin{align*}
&V(x_{m_n+1}, x_{m_n}, w_{m_n},\rho_{m_n}, \sigma_{m_n}) +c\|x_{m_n+1}-x_{m_n}\|^2 +d \|\nabla f(u_{m_n})\|^2+\delta\| w_{m_n}\| \notag\\
&\leq V(x_{m(m_n)}, x_{m(m_n)-1}, w_{m(m_n)-1},\rho_{m(m_n)-1},\sigma_{m(m_n)-1}).
\end{align*}
Using the definition of $V$, we have
\begin{align*}
&(c+ \rho_{m_n})\|x_{m_n+1}-x_{m_n}\|^2  +(\delta + \sigma_{m_n})\| w_{m_n}\| +d \|\nabla f(u_{m_n})\|^2 \notag\\
&\leq V(x_{m(m_n)}, x_{m(m_n)-1}, w_{m(m_n)-1},\rho_{m(m_n)-1},\sigma_{m(m_n)-1}) - f(x_{m_n+1}).
\end{align*}
Similar to the proof of \eqref{eq:indu1}, by the induction hypothesis (\eqref{eq:indu} holds with $k=k_0$) and \eqref{eq:limV}, we obtain that \eqref{eq:indu} holds for $k=k_0+1$, which completes the induction.

Next, let $n \geq N+1$. By the definition of $m(n)$, we have $n -N \leq m(n) \leq n$, and so $k_n := m(n) -(n -N) +1 \in [1, m(n)]$. Then $n-N = m(n) -k_n +1$ and by \eqref{eq:indu}, 
\begin{align*}
&\lim_{n \to +\infty}\|x_n-x_{n-1}\| = \lim_{n \to +\infty}\|x_{m(n)-k_n +1}-x_{m(n)-k_n}\| =0, \\
&\lim_{n \to +\infty}\|w_{n}\|=\lim_{n \to +\infty}\|w_{n-N-1}\| =\lim_{n \to +\infty}\|w_{m(n)-k_n}\|= 0, \text{~and}\\
&\lim_{n \to +\infty} f(x_{n}) =\lim_{n \to +\infty} f(x_{n-N-1}) = \lim_{n \to +\infty} f(x_{m(n)-k_n }) = \zeta.
\end{align*}
Combining this with the definition of $V$ and the boundedness of $\rho_n$ and $\sigma_n$, we deduce that
\begin{equation}\label{eq:limitf}
\lim_{n \to +\infty}V(x_{n}, x_{n-1}, w_{n-1},\rho_{n-1}, \sigma_{n-1}) = \lim_{n \to +\infty}f(x_n) = \zeta.
\end{equation}
Take an arbitrary $\bar x \in \Omega$. Then there exists a sequence $(x_{k_n})_{n \in \N}$ such that $x_{k_n} \to \bar x$ as $n \to +\infty$. Since $f$ is continuous, we have $\lim_{n \to +\infty}f(x_{k_n}) =f(\bar x)$. This, together with \eqref{eq:limitf}, yields that $f(x_n) \to f(\bar x)$ and $V(x_{n}, x_{n-1}, w_{n-1},\rho_{n-1}, \sigma_{n-1}) \to f(\bar x)$ as $n \to +\infty$.

\ref{t:subseq_simplify}: Since $\lambda_{n} \nabla f(u_n) = v_n- x_{n+1} + \mu w_n = -(x_{n+1} -x_n) + \gamma_n (x_n -x_{n-1}) + \mu w_n$, we have that
\begin{align*}
\lambda_{\min} \|\nabla f(u_n)\|\leq \lambda_n \|\nabla f(u_n)\| 
\leq \|x_{n+1} -x_n\| +\bar\gamma\|x_n -x_{n-1}\| + \mu \|w_n\|.
\end{align*}
Combining with \ref{t:subseq_limit} yields $\lim_{n \to +\infty} \|\nabla f(u_n)\|=0$. Since $\nabla f$ is $L$-Lipschitz continuous, it holds that
\begin{equation*}
0\leq \left|\|\nabla f(u_n) \|-\|\nabla f(x_n)\|\right| \leq \|\nabla f(u_n) -\nabla f(x_n)\| \leq L\|u_n -x_n\| = L\beta_n\|x_n -x_{n-1}\|
\end{equation*}
which implies that $\lim_{n \to +\infty} \|\nabla f(x_n)\| =0$, and so $\lim_{n \to +\infty} \|\nabla f(\bar x)\| =0$ any $\bar x \in \Omega$. 

It follows from $\lim_{n \to +\infty}\|\nabla f(u_n)\| =0$ and $\lim_{n \to +\infty}\|w_n\| =0$ that there exists $N_0 \in \mathbb{N}$ such that, for all $n \geq N_0$, $\kappa_{\max} \|\nabla f(u_n)\| +\mu \bar\alpha \|w_{n-1}\| \leq \delta$. Combining this with Lemma~\ref{l:descent}\ref{l:descent_wn}, we have for all $n \geq N_0$, $w_n =0$, which yields $ x_{n+1} -v_n +\lambda_n \nabla f(u_n)=0$, where $u_n = x_n +\beta_n(x_n-x_{n-1})$ and $v_n = x_n +\gamma_n(x_n -x_{n-1})$.
\end{proof}

\begin{remark}[Estimation when $w_n$ vanishes]
Regarding AGDNL algorithm with $N =0$ and $d>0$, we now give the estimation for $n_0$ such that $w_{n_0} =0$. Let $f^* =\min_{x \in \mathcal{H}} f(x)$. It follows from \eqref{eq:linesearch} that
\begin{align*}
V(x_{n+1}, x_n, w_n,\rho_n, \sigma_n)+c \|x_{n+1}-x_n\|^2 +d\|\nabla f(u_n)\|^2+ \frac{\delta}{\underline{\nu}}\|w_n\|
  \notag\\
   \leq V(x_n, x_{n-1}, w_{n-1},\rho_{n-1}, \sigma_{n-1}).
\end{align*}
Using telescoping, we have
\begin{equation}\label{eq:contr_1}
c\sum_{n=0}^{+\infty} \|x_{n+1}-x_n\|^2 +d\sum_{n=0}^{+\infty}\|\nabla f(u_n)\|^2  +\frac{\delta}{\underline{\nu}}\sum_{n=0}^{+\infty}\| w_n\| 
\leq V(x_0, x_{-1}, w_{-1},\rho_{-1}, \sigma_{-1}) - f^*,
\end{equation}
since $V(x_{n+1}, x_n, w_n,\rho_n, \sigma_n) \geq f(x_{n+1}) \geq  f^*$. Set $V_0 =V(x_0, x_{-1}, w_{-1},\rho_{-1}, \sigma_{-1}) - f^*$. Choose $\varepsilon_1 = \frac{ \delta }{\kappa_{\max}  +\mu \bar\alpha }$ and $\bar N \geq \max\{\frac{V_0}{d \varepsilon_1^2}, \frac{\underline{\nu} V_0}{\delta \varepsilon_1}\} $. 
We see that there exists $ n_0 \leq \bar N$ such that $\|w_{n_0-1}\| \leq \varepsilon_1$ and $\|\nabla f(u_{n_0})\| \leq \varepsilon_1$. Indeed, suppose that for all $ n \leq \bar N$, $\|w_{n}\| > \varepsilon_1$ or $\|\nabla f(u_n)\| > \varepsilon_1$. Then
\begin{equation*}
d \sum_{n=0}^{n=\bar N} \|\nabla f(u_n)\|^2+\frac{\delta}{\underline{\nu}} \sum_{n=0}^{n=\bar N}\|w_n\| \geq \bar N \min\{d \varepsilon_1^2, \frac{\delta \varepsilon_1}{\underline{\nu}}  \} \geq V_0,
\end{equation*}
which contradicts \eqref{eq:contr_1}. Therefore, $\kappa_{\max} \|\nabla f(u_{n_0})\| +\mu \bar\alpha  \|w_{n_0-1}\| \leq  \kappa_{\max} \varepsilon_1 +\mu \bar\alpha  \varepsilon_1 =\delta$. This, together with Lemma~\ref{l:descent}\ref{l:descent_wn}, yields that $w_{n_0} =0$.
\end{remark}

In the case $\lambda_{\min} =0$, $N=0$ and $\bar \gamma =0$, we show that the iterative sequence is strongly convergent to an ``approximate'' critical point $x_{\infty}$ of $f$ in the sense that $-\nabla f(x_{\infty}) \in \partial \phi(0)$. 
\begin{proposition}[Convergence to an ``approximate'' critical point]
\label{p:l0N0}
Let $(x_n)_{n\in \mathbb{N}}$ be the sequence generated by AGDNL algorithm with $\lambda_{\min} =0$, $N=0$ and $\bar \gamma =0$. Suppose that $f$ is a differentiable function whose gradient is $L$-Lipschitz continuous and that $f$ is bounded below. Then the sequence $(x_n)_{n\in \mathbb{N}}$ strongly converges to a point $x_{\infty} \in \mathcal{H}$. Moreover, 
\begin{enumerate}
\item \label{p:l0N0i}
If $\kappa_n = \kappa_{\min}$, $\tau_n=\tau$ and as $n \to +\infty$, $\theta_n \to 0$ and $e_n \to 0$, then $-\frac{\kappa_{\min}}{\tau}\nabla f(x_{\infty}) \in \partial \phi(0)$. 
\item \label{p:l0N0ii}
If $\|\nabla f(x_{\infty})\| <\frac{\delta}{\kappa_{\max}}$, then there exists $N_0 \in \mathbb{N}$ such that for all $n \geq N_0$, $x_{n} =x_{N_0}$.
\end{enumerate}
\end{proposition}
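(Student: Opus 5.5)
With $\lambda_{\min}=0$ and $\bar\gamma=0$, Step~\ref{step:main} forces $\lambda_n=\gamma_n=0$ for all $n$, so the iteration is the simplified one of Remark~\ref{r:lambda0}: $x_{n+1}=x_n+\mu w_n$. With $N=0$, the line search criterion \eqref{eq:linesearch} is a monotone descent inequality:
\begin{equation*}
V(x_{n+1},x_n,w_n,\rho_n,\sigma_n)+c\|x_{n+1}-x_n\|^2+d\|\nabla f(u_n)\|^2+\frac{\delta}{\underline{\nu}}\|w_n\|\leq V(x_n,x_{n-1},w_{n-1},\rho_{n-1},\sigma_{n-1}).
\end{equation*}
Since $f$ is bounded below and $\rho_n,\sigma_n\geq 0$, we have $V\geq \inf f$. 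Telescoping then gives
\begin{equation*}
\frac{\delta}{\underline{\nu}}\sum_{n\ge 0}\|w_n\|\leq V(x_0,x_{-1},w_{-1},\rho_{-1},\sigma_{-1})-\inf f<+\infty,
\end{equation*}
exactly as in \eqref{eq:contr_1}. Hence $\sum_n\|x_{n+1}-x_n\|=\mu\sum_n\|w_n\|<+\infty$, so $(x_n)$ is a Cauchy sequence in the Hilbert space $\mathcal{H}$ and converges strongly to some $x_\infty$. This step needs no coercivity assumption, which is why the proposition drops the bounded sublevel set hypothesis of Theorem~\ref{t:subseq}.

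For \ref{p:l0N0i}, set $y_n=\alpha_n\mu w_{n-1}-\theta_n\frac{\nabla f(u_n)}{\|\nabla f(u_n)\|}-\kappa_{\min}\nabla f(u_n)+e_n$, using the identity from the proof of Lemma~\ref{l:descent}\ref{l:descent_wn} with $\lambda_{n-1}=\gamma_{n-1}=0$. The optimality condition of the prox gives $\frac{1}{\tau}(y_n-w_n)\in\partial\phi(w_n)$. We know $w_n\to 0$, and $u_n-x_n=\beta_n(x_n-x_{n-1})\to 0$, so $u_n\to x_\infty$ and, by continuity of $\nabla f$, $\nabla f(u_n)\to\nabla f(x_\infty)$. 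Combined with $\theta_n\to0$, $e_n\to0$ and the boundedness of $\alpha_n$, this gives $\frac{1}{\tau}(y_n-w_n)\to -\frac{\kappa_{\min}}{\tau}\nabla f(x_\infty)$. We can then conclude $-\frac{\kappa_{\min}}{\tau}\nabla f(x_\infty)\in\partial\phi(0)$ by the closedness of the limiting subdifferential.

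The main obstacle is that closedness requires $\phi(w_n)\to\phi(0)$. I would obtain it from the prox minimality inequality
\begin{equation*}
\phi(w_n)+\frac{1}{2\tau}\|w_n-y_n\|^2\le\phi(0)+\frac{1}{2\tau}\|y_n\|^2,
\end{equation*}
which gives $\limsup\phi(w_n)\le\phi(0)$. The reverse inequality $\liminf\phi(w_n)\ge\phi(0)$ needs lower semicontinuity of $\phi$, which I would invoke as implicitly assumed (it holds in all the examples of Remark~\ref{r:DLF}). There is also the issue of $\nabla f(u_n)=0$, where the normalized direction is undefined. I would adopt the convention that this term is $0$, which does not affect the limit since $\theta_n\to 0$.

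For \ref{p:l0N0ii}, since $\nabla f(u_n)\to\nabla f(x_\infty)$ with $\kappa_{\max}\|\nabla f(x_\infty)\|<\delta$, and $\|w_{n-1}\|\to0$, there is $N_0$ such that $\kappa_{\max}\|\nabla f(u_n)\|+\mu\bar\alpha\|w_{n-1}\|<\delta$ for all $n\ge N_0$. By the second inequality in Lemma~\ref{l:descent}\ref{l:descent_wn}, the coefficient of $\|w_n\|$ is then strictly positive, which forces $w_n=0$. Hence $x_{n+1}=x_n$ for all $n\ge N_0$, so $x_n=x_{N_0}$ from then on.
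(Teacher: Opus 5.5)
Your proof is correct and follows the paper's argument: telescoping the $N=0$ line-search inequality gives $\sum_n\|x_{n+1}-x_n\|=\mu\sum_n\|w_n\|<+\infty$, part \ref{p:l0N0i} comes from passing to the limit in the prox optimality condition, and part \ref{p:l0N0ii} comes from Lemma~\ref{l:descent}\ref{l:descent_wn}. Your extra care in \ref{p:l0N0i} is warranted: the paper's ``letting $n\to+\infty$'' silently needs $\phi(w_n)\to\phi(0)$, hence lower semicontinuity of $\phi$ at $0$, which is not among the stated hypotheses. Note also that lower semicontinuity, combined with the prox inequality tested at any fixed point of $\dom\phi$, gives $\phi(0)<+\infty$, which your upper bound tacitly uses.
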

\begin{proof}
 Since $\lambda_{\min} =0$, $N=0$, and $\bar \gamma =0$, it follows from Remark \ref{r:lambda0} that
\begin{equation*}
V(x_{n+1}, x_n, w_n,\rho_n, \sigma_n) +c \|x_{n+1}-x_n\|^2 +\frac{\delta}{\underline{\nu}}\| w_n\| \leq V(x_n, x_{n-1}, w_{n-1},\rho_{n-1}, \sigma_{n-1}),
\end{equation*}
and $\mu w_n = x_{n+1} -x_n$. Using telescoping, we obtain that $\sum_{n=0}^{\infty}\|x_{n+1} -x_n\|=\mu \sum_{n=0}^{\infty} \|w_n\| < +\infty$, which implies the existence of $x_{\infty}$ such that $x_n \to  x_{\infty}$ as $n \to +\infty$. 

Suppose that $\kappa_n = \kappa_{\min}$, $\tau_n=\tau$, $r\tau \geq |\theta_n| +\|e_n\| +\delta$ and as $n \to +\infty$, $\theta_n \to 0$, and $e_n \to 0$. Since $w_n \in \prox_{\tau_n \phi}\left(\alpha_n(x_n -x_{n-1})-\theta_n \frac{\nabla f(u_n)}{\|\nabla f(u_n)\|} -\kappa_n \nabla f(u_n)+ e_n\right)$, we have
\begin{equation*}
\frac{1}{\tau_n}(\alpha_n(x_n -x_{n-1})-\theta_n \frac{\nabla f(u_n)}{\|\nabla f(u_n)\|} -\kappa_n \nabla f(u_n)+ e_n-w_n) \in \partial \phi(w_n).
\end{equation*}
Letting $n \to +\infty$, we obtain that $-\frac{\kappa_{\min}}{\tau}\nabla f(x_{\infty}) \in \partial \phi(0)$. 

Now suppose that $\kappa_{\max}\|\nabla f(x_{\infty})\| < \delta$. We have $u_n = x_n + \beta_n(x_n -x_{n-1}) \to x_{\infty}$ and $w_n \to 0$ as $n \to +\infty$, so $\kappa_{\max}\|\nabla f(u_n)\| + \mu \bar\alpha\|w_{n-1}\| \xrightarrow{n \to +\infty} \kappa_{\max}\|\nabla f(x_{\infty})\|< \delta$.
Therefore, there exists $N_0 \in \mathbb{N}$ such that, for all $n \geq N_0$, $\kappa_{\max}\|\nabla f(u_n)\| + \mu \bar\alpha\|w_{n-1}\| \leq \delta.$ Combining this with Lemma~\ref{l:descent}\ref{l:descent_wn}, for all $n \geq N_0$, $w_n =0$, and so $x_{n+1} =x_n$.
\end{proof}

\begin{remark}[Finite convergence]
Let $\phi: \mathcal{H} \to \R$ be given by $\phi =r \|x\|$ with $r>0$. Then $\partial \phi(0) = B(0,r)$. Let $(x_n)_{n\in \mathbb{N}}$ be the sequence generated by AGDNL algorithm with $\lambda_{\min} =0$, $N=0$, $\kappa_n = \kappa_{\min} = \kappa_{\max} = \kappa$, $\tau_n= \tau$, $\theta_n \equiv 0$, $e_n \equiv 0$, and $\delta= r\tau$. Then according to Proposition~\ref{p:l0N0}\ref{p:l0N0i}, we have $\|\nabla f(x_{\infty})\| \leq \frac{r \tau}{\kappa}$. Therefore, the smaller $\tau$ is, the closer $x_{\infty}$ is to the critical point of $f$. Moreover, if $\|\nabla f(x_{\infty})\| < \frac{r \tau}{\kappa}$, then $(x_n)_{n \in \N}$ is finitely convergent.
\end{remark}

We now recall the Kurdyka--{\L}ojasiewicz (KL) property \cite{Kur98, Loj63} which plays an important role in our convergence analysis. Let $h: \mathcal{H} \to (-\infty, +\infty]$ be a proper lower semicontinuous function. We say that $h$ satisfies the \emph{KL property} at $\bar x \in \dom \partial h$ if there exist $\varepsilon \in \mathbb{R}_{++}$, $\nu \in (0, +\infty]$, and a continuous concave function $\varphi: [0, \nu) \to \R_+$ such that $\varphi(0) =0$ and $\varphi$ is continuously differentiable on  $(0, \nu)$ with $\varphi' >0$, and for all $x \in B(\bar x; \varepsilon)$ with $h(\bar x) < h(x) < h (\bar x) + \nu$, one has
\begin{equation*}
\varphi'(h(x)- h(\bar x))\dist(0, \partial h(x)) \geq 1.
\end{equation*}
If $h$ satisfies the KL property at each point in $\dom \partial h$, then $h$ is called a \emph{KL function}.

The function $h$ is said to satisfy the \emph{KL property at $\bar x$ with exponent $\theta$} if it satisfies the KL property at $\bar x \in \dom \partial h$ with corresponding function $\varphi(s) = \gamma s^{1-\theta}$ for some $\gamma \in \R_{++}$ and $\theta \in [0,1)$, i.e., there exist $c \in \mathbb{R}_{++}$, $\varepsilon \in \mathbb{R}_{++}$, and $\nu \in (0, +\infty]$ such that
\begin{equation*}
\dist(0, \partial h(x)) \geq c (h(x) -h(\bar x))^{\theta}
\end{equation*}
whenever $x \in B(\bar x; \varepsilon)$ with $h(\bar x) < h(x) < h(\bar x) + \nu$. If $h$ is a KL function and has the same exponent $\theta$ at any $\bar x \in \dom \partial h$, then $h$ is called a \emph{KL function with exponent $\theta$}. Some typical KL functions include strongly convex functions, real analytic functions, and semialgebraic functions, see \cite{ABRS10, Loj63}.

\begin{lemma}[Abstract convergence under KL property]
\label{l:KL}
Let $\mathcal{H}$ and $\mathcal{K}$ be two finite-dimensional real Hilbert
spaces. Let $h\colon \mathcal{K}\to \R$ be a continuous function, $(x_n)_{n\in \mathbb{N}}$ and $(z_n)_{n\in \mathbb{N}}$ be sequences in $\mathcal{H}$ and $\mathcal{K}$, respectively, $(\Delta_n)_{n\in \mathbb{N}}$ be a sequence of nonnegative real numbers, and $I$ be a finite set of integer indices with the smallest index not exceeding $1$. Set $\Delta_k =0$ for $k <0$ and consider the following conditions:
\begin{enumerate}
\renewcommand\theenumi{(\alph{enumi})}
\renewcommand{\labelenumi}{\rm (\alph{enumi})}
\item\label{a:decrease}
There exists a positive real number $\alpha$ such that, for all $n \in \N$, $h(z_{n+1}) + \alpha \Delta_n^2 \leq  h(z_n)$.
\item\label{a:error}
There exist nonnegative real numbers $\beta_i$, $i\in I$ such that, for all $n \in \N$, $\dist(0, \partial h(z_n)) \leq \sum_{i\in I} \beta_i\Delta_{n-i}$.
\item\label{a:distance}
There exists an integer $j$ such that, for all $n \in \N$, $\|x_{n+1} -x_n\| \leq  \Delta_{n+j}$.
\end{enumerate}
Suppose that \ref{a:decrease} and \ref{a:error} hold, that the sequence $(z_n)_{n \in \N}$ is bounded with $\Omega$ being the set of cluster points, and that $h$ satisfies the KL property at any point of $\Omega$. Then the following hold:
\begin{enumerate}
\item
For all $\bar z \in \Omega$, $0 \in \partial h(\bar z)$ and $h(z_n)\downarrow \bar h := h(\overline{z})$ as $n\to +\infty$.
\item 
If \ref{a:distance} holds, then $\sum_{n=0}^{+\infty} \|x_{n+1} -x_n\| < +\infty$ and the sequence $(x_n)_{n \in \N}$ converges to a point $\bar x \in \mathcal{H}$.
\item 
Suppose further that $h$ satisfies the KL property at any point of $\Omega$ with exponent $\alpha \leq 1/2$. Then there exist $\Gamma_1 \in \R_{++}$ and $\zeta \in (0,1)$ such that, for all $n \in \N$, $0 \leq h(z_n) - \bar h \leq \Gamma_1 \zeta^{n}$. Moreover, if \ref{a:distance} holds, then there exists $\Gamma_2 \in \R_{++}$ such that, for all $n \in \N$, $\|x_n -\bar x\| \leq \Gamma_2 \zeta^{n/2}.$
\end{enumerate}
\end{lemma}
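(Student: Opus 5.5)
The argument follows the standard Attouch--Bolte--Svaiter template, adapted to the delayed error bound in \ref{a:error}. For part (i), condition \ref{a:decrease} makes $(h(z_n))$ nonincreasing. Since $(z_n)$ is bounded and $h$ is continuous, $(h(z_n))$ is bounded below, so it converges to some $\bar h$, and $\sum_n \Delta_n^2 \le (h(z_0)-\bar h)/\alpha<+\infty$. In particular $\Delta_n\to 0$. Condition \ref{a:error} then gives $\dist(0,\partial h(z_n))\to 0$, because $I$ is finite. Take $\bar z\in\Omega$ with $z_{k_n}\to\bar z$. Continuity of $h$ gives $h(\bar z)=\bar h$. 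Choose $g_n\in\partial h(z_{k_n})$ with $\|g_n\|\to 0$. Closedness of the limiting subdifferential (using $h(z_{k_n})\to h(\bar z)$ and a diagonal argument from Fréchet subgradients) then yields $0\in\partial h(\bar z)$. Since $\Omega$ is compact and $h\equiv \bar h$ on it, I would invoke the uniform KL lemma: there exist $\varepsilon,\nu>0$ and a single concave $\varphi$ such that $\varphi'(h(z)-\bar h)\,\dist(0,\partial h(z))\ge 1$ whenever $\dist(z,\Omega)<\varepsilon$ and $\bar h<h(z)<\bar h+\nu$. This compactness uses finite dimensionality. Moreover $\dist(z_n,\Omega)\to 0$ by boundedness.

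For part (ii), first dispose of the trivial case in which $h(z_{n_0})=\bar h$ for some $n_0$. Then \ref{a:decrease} forces $\Delta_n=0$ for all $n\ge n_0$, and \ref{a:distance} gives finitely many nonzero increments. Otherwise, for $n\ge n_1$, both KL conditions hold. Write $\varphi_n=\varphi(h(z_n)-\bar h)$. Concavity and \ref{a:decrease} give
\[
\varphi_n-\varphi_{n+1}\ge \varphi'(h(z_n)-\bar h)\,(h(z_n)-h(z_{n+1}))\ge \frac{\alpha\Delta_n^2}{\sum_{i\in I}\beta_i\Delta_{n-i}}.
\]
Set $\beta=\sum_i\beta_i$ and $m=|I|$. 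The inequality $2\sqrt{ab}\le a+b$ then gives
\[
2\Delta_n\le \frac{\beta}{\alpha}(\varphi_n-\varphi_{n+1})+\frac{1}{m}\sum_{i\in I}\Delta_{n-i}.
\]
Now sum from $n_1$ to $K$. Each shifted sum $\sum_n\Delta_{n-i}$ is bounded by $\sum_{n\ge n_1-\max I}^{K-\min I}\Delta_n$. Because $\min I\le 1$, the upper index exceeds $K$ by at most a fixed finite number of terms. This is the main obstacle: the delayed and possibly future indices must be absorbed. After averaging over $I$, the right-hand side contributes at most one copy of $\sum\Delta_n$, plus finitely many boundary terms, and those boundary terms stay bounded because $\Delta_n\to0$. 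Subtracting one copy of the sum from the left-hand factor $2$ leaves $\sum_n\Delta_n\le \frac{\beta}{\alpha}\varphi_{n_1}+C<+\infty$. Condition \ref{a:distance} then gives $\sum\|x_{n+1}-x_n\|<+\infty$, so $(x_n)$ is Cauchy and converges.

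For part (iii), take $\varphi(s)=\gamma s^{1-\theta}$ with $\theta\le1/2$. We may shrink to $h(z_n)-\bar h<1$, so the KL inequality with exponent $1/2$ holds. KL then gives $(h(z_n)-\bar h)^{1/2}\le C\,\dist(0,\partial h(z_n))\le C\sum_i\beta_i\Delta_{n-i}$. Squaring and using \ref{a:decrease} gives
\[
h(z_{n})-\bar h\le C'\sum_{i\in I}\bigl(h(z_{n-i})-h(z_{n-i+1})\bigr).
\]
Because $\min I\le1$ and $(h(z_n))$ is monotone, each term is at most $h(z_{n-p})-h(z_{n+1})$, where $p=\max I$. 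So $r_{n+1}\le r_n\le C''(r_{n-p}-r_{n+1})$ with $r_n=h(z_n)-\bar h$. This yields $r_{n+1}\le \frac{C''}{1+C''}r_{n-p}$, hence geometric decay $r_n\le\Gamma_1\zeta^n$ after adjusting $\zeta$ to account for the block length $p+1$.

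For the iterates, repeat the summation of part (ii) from $n$ to $\infty$, using $\varphi_n=\gamma r_n^{1/2}$. This bounds $\sum_{k\ge n}\Delta_k$ by a multiple of $r_{n-p}^{1/2}$ plus boundary $\Delta$'s, and those boundary terms satisfy $\Delta_k\le (r_k/\alpha)^{1/2}$. Hence $\|x_n-\bar x\|\le\sum_{k\ge n+j}\Delta_k\le\Gamma_2\zeta^{n/2}$.
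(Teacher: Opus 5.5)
Your proposal is correct in substance, but it takes a different, self-contained route from the paper. The paper does not re-run the Attouch--Bolte--Svaiter argument. It invokes \cite[Lemma~5.1 and Theorem~5.2]{BDL21} and checks only two hypotheses of that abstract result: (I) some subsequence satisfies $z_{k_n}\to\tilde z$ with $h(z_{k_n})\to h(\tilde z)$, and (II) $h$ is constant on $\Omega$. Both follow from continuity of $h$, boundedness of $(z_n)$, and the monotonicity of $(h(z_n))$ given by (a). Your first paragraph contains exactly these checks. Everything after it is a from-scratch proof of what the paper imports: the uniformized KL inequality on the compact set $\Omega$, the AM--GM/telescoping step with shifted indices absorbed by averaging over $I$, the exponent-$\tfrac12$ recursion, and the tail bound for $\|x_n-\bar x\|$. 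Your route shows where finite dimensionality (compactness of $\Omega$, closedness of the limiting subdifferential) and finiteness of $I$ are used; the paper's route buys brevity.

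A few steps need small repairs.

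First, start from $\Delta_n^2\le\frac1\alpha(\varphi_n-\varphi_{n+1})\sum_{i\in I}\beta_i\Delta_{n-i}$. AM--GM with the constant $\beta=\sum_i\beta_i$ yields the weighted average $\sum_{i\in I}\frac{\beta_i}{\beta}\Delta_{n-i}$, not $\frac1m\sum_{i\in I}\Delta_{n-i}$. Alternatively, use the constant $m\max_i\beta_i$ with the uniform average. Either version sums to one copy of $\sum\Delta_n$ plus boundary terms, so (ii) is unaffected.

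Second, in (ii) the number of ``future'' boundary terms is bounded because $I$ is finite, not because $\min I\le 1$.

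Third, in (iii) you claim each $h(z_{n-i})-h(z_{n-i+1})$ is at most $h(z_{n-p})-h(z_{n+1})$. This is false when $I$ contains negative indices, e.g.\ $I=\{-1\}$, which the hypothesis allows. The correct telescoping window runs from $n-p$ to $s:=n+1-\min I$, giving $r_n\le C''(r_{n-p}-r_s)$. The assumption $\min I\le 1$ is exactly what ensures $s\ge n$, hence $r_s\le r_n$. This yields $r_s\le\frac{C''}{1+C''}\,r_{n-p}$, a contraction over blocks of length $p+1-\min I$. With this correction, the geometric rates for $h(z_n)-\bar h$ and for $\|x_n-\bar x\|$ follow as you describe.
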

\begin{proof}
In view of \cite[Lemma~5.1 and Theorem~5.2]{BDL21}, we only need to check that (I) there  exist a subsequence $(z_{k_n})_{n \in \N}$ and $\tilde{z}$ such that $z_{k_n} \to \tilde{z}$ and $h(z_{k_n}) \to h(\tilde{z})$ as $n \to +\infty$; and (II) $h$ is constant on $\Omega$. The first one is due to the continuity of $h$ and the boundedness of $(z_{n})_{n \in \N}$. We now check (II). It follows from \ref{a:decrease} that $(h(z_n))_{n \in \N}$ is nonincreasing, and hence convergent due to (I). Take an arbitrary $\bar z$ in $\Omega$. Then, there exists a subsequence $(z_{k_n})_{n \in \N}$ such that $z_{k_n} \to z$ as $n \to +\infty$. Since $h$ is continuous, we obtain that $h(\bar z) =\lim_{n \to +\infty} h(z_{k_n}) =\lim_{n\to +\infty} h(z_n)$, and so $h$ is constant on $\Omega$. The proof is complete.
\end{proof}

\begin{theorem}[Sequential and linear convergence]
\label{t:fullseq}
Suppose that $\mathcal{H}$ is finite dimensional, that $f$ is a differentiable function whose gradient is $L$-Lipschitz continuous, and that $f$ is bounded below. Let $(x_n)_{n\in \mathbb{N}}$ be the sequence generated by AGDNL algorithm with $\lambda_{\min} >0$. Suppose further that the set $\{x \in \mathcal{H}: f(x) \leq f(x_0) + \sigma_{-1}\|w_{-1}\|^2\}$ is bounded, and that one of the following holds
\begin{enumerate}
\item \label{ADFGe:i}  $N=0$ and $ g(x,y,\rho):= f(x)+ \rho  \|x -y\|^2 $ satisfies the KL property at $(\bar x, \bar x, \rho)$ for all $\bar x \in \mathcal{H}$ and $\rho \in [0, \bar\rho]$, where $\bar\rho: =\frac{1}{\mu^2}(1 -\mu \bar\alpha)\frac{\bar\gamma}{\underline{\nu}} + \frac{ L_{\max}\bar\beta +\bar\gamma}{2 }$.
\item \label{ADFGe:ii} $\frac{1}{\lambda_{\max} }- L \bar\beta -\frac{\bar\gamma}{\lambda_{\min}}-\frac{L}{2} >0$ and $h(x,y) =f(x) +(\frac{L \bar\beta}{2} +\frac{\bar\gamma}{2 \lambda_{\min} }) \|x-y\|^2$ satisfies the KL property at $(\bar x, \bar x)$  for all $\bar x \in \mathcal{H}$.
\end{enumerate}
Then $\sum_{k=0}^{+\infty} \|x_{n+1} -x_n\| < +\infty$ and $(x_n)_{n\in \N}$ converges to a critical point $x^*$ of $f$. Moreover, if the KL property in \ref{ADFGe:i} or \ref{ADFGe:ii} is satisfied with exponent $\theta \leq 1/2$, then there exist $\Gamma \in \R_{++}$ and $\zeta \in (0,1)$ such that, for all $n \in \N$, $\|x_n -x^*\| \leq \Gamma{\zeta}^{n/2}$ and $|f(x_n) -f(x^*) |\leq \Gamma{\zeta}^n$.
\end{theorem}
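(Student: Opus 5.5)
The plan is to reduce to the AGNL regime and then apply Lemma~\ref{l:KL}. By Theorem~\ref{t:subseq}\ref{t:subseq_simplify}, since $\lambda_{\min}>0$, there is $N_0$ such that $w_n=0$ for all $n\ge N_0$. From then on the iteration is \eqref{AGNL}, $x_{n+1}=x_n+\gamma_n\Delta_n-\lambda_n\nabla f(u_n)$ with $\Delta_n=x_n-x_{n-1}$, and the line search reduces to \eqref{AGNL_ls}. Since convergence and rates are unaffected by finitely many initial terms, we re-index so that $N_0=0$. Theorem~\ref{t:subseq} also gives boundedness of $(x_n)$, $\|\Delta_n\|\to0$, and the fact that cluster points are critical. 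The choice $c=0$ forces $d>0$, so in all cases the descent contains either $c\|\Delta_{n+1}\|^2$ or $d\|\nabla f(u_n)\|^2$, and the latter dominates $\|\Delta_{n+1}\|^2$ up to $\|\Delta_n\|^2$ terms.

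\emph{Case \ref{ADFGe:i}.} Here $N=0$, so \eqref{AGNL_ls} is the monotone inequality $g(x_{n+1},x_n,\rho_n)+c\|\Delta_{n+1}\|^2+d\|\nabla f(u_n)\|^2\le g(x_n,x_{n-1},\rho_{n-1})$. We take $z_n=(x_n,x_{n-1},\rho_{n-1})$ in $\mathcal{H}\times\mathcal{H}\times\R$, with $h(x,y,\rho)=g(x,y,\rho)$; this $h$ is continuous, and $(z_n)$ is bounded because $\rho_n\in[0,\bar\rho]$. Cluster points of $(z_n)$ have the form $(\bar x,\bar x,\bar\rho')$ because $\Delta_n\to0$, so the KL hypothesis applies. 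For condition \ref{a:decrease} we set $\Delta_n^{\mathrm{KL}}:=\|\Delta_{n+1}\|+\|\nabla f(u_n)\|$. The difficulty is that when $c=0$ only $d\|\nabla f(u_n)\|^2$ is available. We then use $\|\Delta_{n+1}\|\le\bar\gamma\|\Delta_n\|+\lambda_{\max}\|\nabla f(u_n)\|$ and, if needed, an auxiliary $\Delta$ that collects two consecutive steps; the index set $I$ in Lemma~\ref{l:KL} permits such lags.

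For condition \ref{a:error}, the gradient of $h$ is
\[
\bigl(\nabla f(x)+2\rho(x-y),\ -2\rho(x-y),\ \|x-y\|^2\bigr),
\]
whose norm at $z_n$ is at most $\|\nabla f(x_n)\|+C\|\Delta_n\|$. We then bound
\[
\|\nabla f(x_n)\|\le\|\nabla f(u_{n})\|+L\bar\beta\|\Delta_n\|,
\]
and also $\lambda_{\min}\|\nabla f(u_n)\|\le\|\Delta_{n+1}\|+\bar\gamma\|\Delta_n\|$. This yields a bound by lagged $\Delta$'s.

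\emph{Main obstacle.} The $\rho$-component of the gradient is not controlled by a subgradient of $f$ alone, so we need to check that the $\rho$-variable does not spoil the KL argument. Its partial derivative $\|x_n-x_{n-1}\|^2\le C\|\Delta_n\|$ is harmless because $\Delta_n$ is bounded. Lemma~\ref{l:KL}\ref{a:distance} then holds with $j$ chosen so that $\|x_{n+1}-x_n\|\le\Delta_{n+j}$.

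\emph{Case \ref{ADFGe:ii}.} Here $N$ may be positive, so we avoid the line search and instead prove a direct descent for $h(x_{n+1},x_n)$, which is a fixed function independent of $n$. Using the $L$-smoothness inequality together with $x_{n+1}-x_n=\gamma_n\Delta_n-\lambda_n\nabla f(u_n)$, we obtain
\[
f(x_{n+1})\le f(x_n)+\langle\nabla f(u_n),\Delta_{n+1}\rangle+L\beta_n\|\Delta_n\|\|\Delta_{n+1}\|+\tfrac L2\|\Delta_{n+1}\|^2 .
\]
We substitute $\nabla f(u_n)=(\gamma_n\Delta_n-\Delta_{n+1})/\lambda_n$ and apply Young's inequality. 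The result is $h(x_{n+1},x_n)+\bigl(\tfrac1{\lambda_{\max}}-L\bar\beta-\tfrac{\bar\gamma}{\lambda_{\min}}-\tfrac L2\bigr)\|\Delta_{n+1}\|^2\le h(x_n,x_{n-1})$, and the assumed positivity of this coefficient gives condition \ref{a:decrease}. The error bound \ref{a:error} and the distance condition \ref{a:distance} follow as in Case \ref{ADFGe:i}.

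Lemma~\ref{l:KL} now gives summability $\sum\|x_{n+1}-x_n\|<+\infty$ and convergence of $(x_n)$ to some $x^*$. Theorem~\ref{t:subseq}\ref{t:subseq_simplify} gives $\nabla f(x^*)=0$. When the KL exponent is at most $1/2$, Lemma~\ref{l:KL} gives the linear rates for $h(z_n)$ and $\|x_n-x^*\|$. To pass to $|f(x_n)-f(x^*)|$, we use $h(z_n)-f(x_n)=\rho\|\Delta_n\|^2\le C\zeta^n$ together with $h(z_n)\to f(x^*)$.
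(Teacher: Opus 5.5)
Your overall architecture matches the paper's: reduce to AGNL after $N_0$, use $\mathbf z_n=(x_n,x_{n-1},\rho_{n-1})$ with $g$ in Case (i), prove a direct descent for the fixed function $h$ in Case (ii), then apply Lemma~\ref{l:KL}. However, Case (i) has a genuine gap in the subcase $c=0$.

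\textbf{The gap.} When $c=0$, the line search only gives $g(\mathbf z_{n+1})+d\|\nabla f(u_n)\|^2\le g(\mathbf z_n)$. Your proposed remedy uses $\|x_{n+1}-x_n\|\le\bar\gamma\|x_n-x_{n-1}\|+\lambda_{\max}\|\nabla f(u_n)\|$ together with an ``auxiliary $\Delta$ collecting two consecutive steps''. This does not produce the hypotheses of Lemma~\ref{l:KL}.
\begin{itemize}
\item If the KL sequence is $\|\nabla f(u_n)\|$, condition~(b) requires bounding $\|x_n-x_{n-1}\|$ by finitely many lagged gradient norms. Unrolling the recursion never terminates.
\item If the KL sequence involves $\|x_{n+1}-x_n\|$, as your $\|\Delta_{n+1}\|+\|\nabla f(u_n)\|$ does, condition~(a) fails. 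Nothing in the descent controls $\|x_{n+1}-x_n\|^2$ without the $\bar\gamma\|x_n-x_{n-1}\|$ term, and no smallness of $\bar\gamma$ is assumed (the experiments even use $\bar\gamma=1$).
\end{itemize}
Relatedly, your choice $\|\Delta_{n+1}\|+\|\nabla f(u_n)\|$ also breaks~(a) when $c>0$ but $d=0$. The algorithm allows this combination, and $\gamma_n>0$ is then permitted.

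\textbf{The missing observation} is a parameter rule in Step~\ref{step:main}: $\tilde\gamma_n=0$ is required whenever $c=0$. The inner loop only rescales $\gamma_n$ by $\ell_2$, so $\gamma_n\equiv 0$ in that case. Consequently:
\begin{itemize}
\item For $n\ge N_0$, $x_{n+1}-x_n=-\lambda_n\nabla f(u_n)$, hence $d\|\nabla f(u_n)\|^2\ge (d/\lambda_{\max}^2)\|x_{n+1}-x_n\|^2$.
\item In both subcases you obtain $g(\mathbf z_{n+1})+\underline c\,\|x_{n+1}-x_n\|^2\le g(\mathbf z_n)$, with $\underline c=c$ or $\underline c=d/\lambda_{\max}^2$.
\end{itemize}

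Then take the KL sequence to be simply $\|x_{n+1}-x_n\|$:
\begin{itemize}
\item Condition~(a) holds by the descent above.
\item Condition~(b) holds with $I=\{0,1\}$, using your bounds $\|\nabla f(x_n)\|\le\|\nabla f(u_n)\|+L\bar\beta\|x_n-x_{n-1}\|$ and $\lambda_{\min}\|\nabla f(u_n)\|\le\|x_{n+1}-x_n\|+\bar\gamma\|x_n-x_{n-1}\|$.
\item Condition~(c) holds with $j=0$.
\end{itemize}

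With this fix, the rest of your argument coincides with the paper's proof: Case (ii), the handling of the $\rho$-component, and the transfer of the rate to $|f(x_n)-f(x^*)|$. Your rate transfer is spelled out more explicitly than in the paper.
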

\begin{proof}
\ref{ADFGe:i}: According to Theorem~\ref{t:subseq}\ref{t:subseq_simplify}, there exists $N_0$ such that for all $n \geq N_0$, $w_{n} =0$ and $x_{n+1}=v_{n}-\lambda_n \nabla f(u_n)$. Let $n \geq N_0+1$. 
We first see that there exists $\underline{c} >0$ such that
\begin{equation}\label{eq:KL1}
V(x_{n+1}, x_n,0,\rho_n, \sigma_n)+ \underline{c} \|x_{n+1}-x_n\|^2  \leq V(x_n, x_{n-1},0,\rho_{n-1},\sigma_{n-1}).
\end{equation}
Indeed, since $N=0$, it follows from \eqref{eq:linesearch} that
\begin{equation}\label{eq:KL1'}
V(x_{n+1}, x_n,0,\rho_n, \sigma_n)+ c\|x_{n+1}-x_n\|^2  + d \|\nabla f(u_n)\|^2 \leq V(x_n, x_{n-1},0,\rho_{n-1},\sigma_{n-1}).
\end{equation}
By Step~\ref{step:init}, $d >0$ if $\lambda_{\min} >0$ and $c=0$. Thus, we must have $c >0$ or $d>0$ since $\lambda_{\min} >0$. If $c>0$, then \eqref{eq:KL1} holds with $\underline{c}:= c$. If $c=0$ and $d >0$, then by Step~\ref{step:main}, for all $n \in \N$, $\gamma_n =0$. Thus $x_{n+1} =x_n -\lambda_n \nabla f(u_n)$, which implies that $\|\nabla f(u_n)\| = \frac{1}{\lambda_n}\|x_{n+1} -x_n\| \geq \frac{1}{\lambda_{\max}}\|x_{n+1} -x_n\| $. This together with \eqref{eq:KL1'} implies that \eqref{eq:KL1} is satisfied with $\underline{c} := d/\lambda_{\max}^2$.

Set $\mathbf z_n = (x_{n}, x_{n-1}, \rho_{n-1})$. Noting that $g(x,y,s)=V(x,y,0,s,t)$, we have from \eqref{eq:linesearch} that
\begin{equation}\label{eq:KL_1}
g(\mathbf z_{n+1})+ \underline{c} \|x_{n+1}-x_n\|^2  \leq g(\mathbf z_n).
\end{equation}
On the other hand,
\begin{equation*}
\nabla g(\mathbf z_{n}) =(2\rho_{n-1}(x_{n} -x_{n-1}) + \nabla f(x_{n}), -2\rho_{n-1}(x_{n} -x_{n-1}) ,\|x_{n} -x_{n-1}\|^2 ).
\end{equation*}
Using the definition of $\rho_n$ and the fact that $\gamma_n \leq \bar\gamma$, $\nu_n \geq \underline{\nu}$, $L_n \leq L_{\max}$, we have
\begin{equation*}
\rho_n = \frac{1}{\mu^2}(1 -\mu \bar\alpha)\frac{\gamma_n}{\nu_n} + \frac{ L_n\beta_n +\gamma_n}{2 }\leq \frac{1}{\mu^2}(1 -\mu \bar\alpha)\frac{\bar\gamma}{\underline{\nu}} + \frac{ L_{\max}\bar\beta +\bar\gamma}{2 }=\bar\rho.
\end{equation*}
Therefore, by using the sum extension of the natural norm to the product space,
\begin{align}\label{eq:d1}
\|\nabla g(\mathbf z_{n})) \|&\leq \|2\rho_{n-1}(x_{n} -x_{n-1}) +\nabla f(x_{n})\| +\|2\rho_{n-1}(x_{n} -x_{n-1})\| +\|x_{n} -x_{n-1}\|^2 \notag\\
&\leq 4 \bar\rho\|x_{n} -x_{n-1}\| +\|\nabla f(x_{n})\| +\|x_{n} -x_{n-1}\|^2.
\end{align}
Since $\nabla f$ is $L$-Lipschitz continuous and $\beta_n \leq \bar\beta$,
\begin{equation*}
\|\nabla f(u_n) -\nabla f(x_n)\| \leq L \|u_n -x_n\| = L\|\beta_n(x_n -x_{n-1})\| \leq L \bar\beta\|x_n -x_{n-1}\|.
\end{equation*}
As $\lambda_n \nabla f(u_n) = v_n - x_{n+1} = -(x_{n+1} -x_n) + \gamma_n (x_n - x_{n-1})$, it follows that
\begin{align}\label{eq:d2}
\|\nabla f(x_n)\|& \leq \|\nabla f(u_n)\| + \|\nabla f(u_n) -\nabla f(x_n)\|\notag\\
&\leq \frac{1}{\lambda_{\min}} (\|x_{n+1}-x_n\| + \bar\gamma\|x_n-x_{n-1}\|)  + L \bar\beta\|x_n -x_{n-1}\|  \notag\\
& =  \frac{1}{\lambda_{\min}}\|x_{n+1}-x_n\|  + \left( \frac{\bar\gamma}{\lambda_{\min}} +  L \bar\beta\right)\|x_n - x_{n-1}\|.
\end{align}
According to Theorem~\ref{t:subseq}\ref{t:subseq_limit}, $\lim_{n \to +\infty}\|x_{n}-x_{n-1}\| =0$, so there exists $N_1 \geq N_0 +1$ such that, for all $n \geq N_1$, $\|x_{n}-x_{n-1}\| \leq 1$, which implies that  $\|x_{n}-x_{n-1}\|^2 \leq \|x_{n}-x_{n-1}\|$. Combining this with \eqref{eq:d1} and \eqref{eq:d2}, there exists $C$ such that, for all $n \geq N_1$,
\begin{equation} \label{eq:KL2}
\| \nabla g(\mathbf z_{n}) \| \leq C(\|x_{n+1}-x_{n}\|+\|x_{n} -x_{n-1}\|).
\end{equation}
Moreover, $(\mathbf z_n)_{n \in \N}$ is bounded due to the boundedness of $(x_n)_{n \in \N}$ and $(\rho_n)_{n \in \N}$. Combining this with \eqref{eq:KL_1} and \eqref{eq:KL2}, we get the conclusion due to Lemma~\ref{l:KL}. 

\ref{ADFGe:ii}: As $\nabla f$ is $L$-Lipschitz continuous, for all $ n \in \N$,
\begin{align*}
&f(x_{n+1}) -f(x_n) -\frac{L}{2}\|x_{n+1}-x_n\|^2 \notag\\
&\leq \langle \nabla f(x_n), x_{n+1} -x_n\rangle  \notag\\
& = \langle\nabla f(x_n) -\nabla f(u_n), x_{n+1} -x_n\rangle + \langle \nabla f(u_n), x_{n+1} -x_n\rangle  \notag\\
& \leq L\|x_n -u_n\|\|x_{n+1} -x_n\| + \frac{1}{\lambda_n}\langle v_n -x_{n+1}, x_{n+1} -x_n \rangle \notag\\
& = L \beta_n \|x_n -x_{n-1}\|\|x_{n+1} -x_n\|- \frac{1}{\lambda_n}\| x_{n+1} -x_{n}\|^2 + \frac{\gamma_n}{\lambda_n}\langle x_n -x_{n-1}, x_{n+1} -x_n \rangle.
\end{align*}
Noting that $\langle x_n -x_{n-1}, x_{n+1} -x_n \rangle \leq \frac{1}{2}\left( \|x_n -x_{n-1}\|^2 + \|x_{n+1} -x_n\|^2 \right)$, we derive that
\begin{equation*}
f(x_{n+1}) -f(x_n) +\left(\frac{1}{\lambda_n} - \frac{L \beta_n}{2} -\frac{\gamma_n}{2\lambda_n}-\frac{L}{2} \right)\|x_{n+1}-x_n\|^2 \leq (\frac{L \beta_n}{2} +\frac{\gamma_n}{2\lambda_n})\|x_n - x_{n-1}\|^2.
\end{equation*}
Using the definition of $h$, we have
\begin{equation}\label{eq:cvKL2}
h(x_{n+1}, x_n) + \hat c \|x_{n+1}-x_n\|^2 \leq h(x_n,x_{n-1}),
\end{equation}
where $\hat c:= \frac{1}{\lambda_{\max} }- L \bar\beta -\frac{\bar\gamma}{\lambda_{\min}}-\frac{L}{2} >0$. On the other hand,
\begin{equation*}
\nabla h(x_n, x_{n-1}) = (\nabla f(x_{n}) + (L \bar\beta +\frac{\bar\gamma}{ \lambda_{\min} })(x_n -x_{n-1}),  -(L \bar\beta +\frac{\bar\gamma}{ \lambda_{\min} })(x_n -x_{n-1})).
\end{equation*}
Combining this with \eqref{eq:d2}, we obtain that
\begin{equation*}
\|\nabla h(x_n, x_{n-1})\| \leq  \frac{1}{\lambda_{\min}}\|x_{n+1}-x_n\|  + 3\left( \frac{\bar\gamma}{\lambda_{\min}} +  L \bar\beta\right)\|x_n - x_{n-1}\|,
\end{equation*}
which together with \eqref{eq:cvKL2} and Lemma~\ref{l:KL} completes the proof. 
\end{proof}

Regarding the last conclusion of Theorem~\ref{t:fullseq}, we note that $\lim_{k \to +\infty} \frac{\Gamma \zeta^k}{ 1/k^2} =0$ as $\zeta \in (0,1)$. Therefore, $|f(x_k)-f(x^*)| \leq \Gamma \zeta^k$ implies $|f(x_k)-f(x^*)| \leq o(\frac{1}{k^2})$.

\begin{remark} 
In Theorem~\ref{t:fullseq}, we impose the assumption that $g$ or $h$ satisfies the KL property. This assumption is automatically satisfied when $f$ is a proper lower semicontinuous definable function (see, e.g., \cite[Section~4.3]{ABRS10}). In particular, if $f$ is a proper lower semicontinuous semialgebraic function, then so are $g$ and $h$, in which case $f$, $g$, and $h$ are KL functions with exponent in $[0, 1)$. On the other hand, if $f$ satisfies the KL property with exponent $\theta \in [\frac{1}{2}, 1)$, according to \cite[Proposition~4.1]{Yan21} and \cite[Theorem~3.6]{LP18}, $g$ and $h$ satisfy the KL property with exponent $\theta$.
\end{remark}

We require in Theorem~\ref{t:fullseq} that either $N=0$ or $\frac{1}{\lambda_{\max}} - L \beta -\frac{\gamma}{\lambda_{\min}} -\frac{L}{2} >0$. Nevertheless, without requiring these conditions, in the case when $\bar\beta =\bar\gamma = 0$, we still have linear convergence of the objective function values in the following proposition.

\begin{proposition}[Linear convergence]
Suppose that $\mathcal{H}$ is finite dimensional, that $f$ is a differentiable function whose gradient is $L$-Lipschitz continuous, and that $f$ is bounded below. Let $(x_n)_{n\in \mathbb{N}}$ be the sequence generated by AGDNL algorithm with $\lambda_{\min} >0$. Suppose further that the set $\{x \in \mathcal{H}: f(x) \leq f(x_0) + \sigma_{-1}\|w_{-1}\|^2\}$ is bounded, that $\bar\beta =\bar\gamma = 0$, and that $f$ is a KL function with exponent $\theta \leq \frac{1}{2}$. Then, there exist $\Gamma \in \R_{++}$ and $\rho \in (0,1)$ such that, for all $n \in \N$, $f(x_n) -f(\bar x) \leq \Gamma{\rho}^n$, where $\bar x $ is a cluster point of $(x_n)_{n \in \N}$.
\end{proposition}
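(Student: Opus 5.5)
With $\bar\beta=\bar\gamma=0$ we have $\beta_n=\gamma_n=0$, hence $u_n=v_n=x_n$, $\rho_n=0$, and $V(x_{n+1},x_n,w_n,\rho_n,\sigma_n)=f(x_{n+1})+\sigma_n\|w_n\|^2$. By Theorem~\ref{t:subseq}\ref{t:subseq_simplify}, there is $N_0$ such that $w_n=0$ for all $n\ge N_0$. From then on the iteration is a plain gradient step $x_{n+1}=x_n-\lambda_n\nabla f(x_n)$ with $\lambda_n\in[\lambda_{\min},\lambda_{\max}]$, and the potential is $V_{n+1}:=V(x_{n+1},x_n,0,\rho_n,\sigma_n)=f(x_{n+1})$.

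The line search \eqref{eq:linesearch} then becomes the nonmonotone descent inequality
\begin{equation*}
f(x_{n+1})+c\|x_{n+1}-x_n\|^2+d\|\nabla f(x_n)\|^2\le \max_{[n-N]_+\le k\le n} f(x_k)
\end{equation*}
for all $n\ge N_0+N+1$. Since $x_{n+1}-x_n=-\lambda_n\nabla f(x_n)$, in either case $c>0$ or ($c=0$, $d>0$) we get a constant $a>0$ with
\begin{equation*}
f(x_{n+1})+a\|\nabla f(x_n)\|^2\le \max_{[n-N]_+\le k\le n} f(x_k),
\end{equation*}
namely $a=c\lambda_{\min}^2$ or $a=d$.

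Next we bring in the KL inequality. By Theorem~\ref{t:subseq}\ref{t:subseq_limit}, $f(x_n)\to\zeta=f(\bar x)$ for every $\bar x\in\Omega$, and $\Omega$ is nonempty and compact because $(x_n)$ is bounded and $\mathcal H$ is finite dimensional. By Theorem~\ref{t:subseq}\ref{t:subseq_simplify}, each $\bar x\in\Omega$ is critical. Since $f$ is KL with exponent $\theta\le 1/2$, a standard uniformization over the compact set $\Omega$ (finite subcover, with $\dist(x_n,\Omega)\to0$) yields $C>0$ such that, for all large $n$,
\begin{equation*}
\|\nabla f(x_n)\|^2\ge C\,(f(x_n)-\zeta)
\end{equation*}
whenever $f(x_n)>\zeta$. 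Here I use $\theta\le1/2$ and $f(x_n)-\zeta<1$ to replace $(f-\zeta)^{2\theta}$ by $f-\zeta$; the inequality is trivial if $f(x_n)\le\zeta$. Setting $e_n=f(x_n)-\zeta$ and $E_n=\max_{[n-N]_+\le k\le n}e_k$, this gives
\begin{equation*}
e_{n+1}\le E_n-aC\,e_n.
\end{equation*}
We know $E_n\ge 0$ and decreases to $0$, since $E_n$ is the nonincreasing max-sequence from the proof of Theorem~\ref{t:subseq}, which converges to $\zeta$ after shifting. Note that individual $e_n$ might be negative; I handle this by using $e_n^+$, or by noting that the bound is only needed from above.

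The main obstacle is turning this nonmonotone recursion into geometric decay of $E_n$. The recursion alone only gives $e_{n+1}\le E_n$, and the $-aCe_n$ term helps only when $e_n$ itself is large. I would argue in blocks of length $N+1$ in the style of Grippo--Lampariello--Lucidi/Dai: combine the above with the plain bound $e_{n+1}\le E_n$. Then either some $e_k$ in the next block satisfies $e_k\ge \eta E_{k-1}$ for a fixed $\eta$, which forces $e_{k+1}\le(1-aC\eta)E_{k-1}$, or all $e_k$ in the block are already at most $\eta E_{\text{start}}$. Choosing $\eta$ suitably (for instance $\eta$ with $\max\{\eta,1-aC\eta\}<1$), induction shows $E_{n+N+1}\le q\,E_n$ for some $q\in(0,1)$. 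A cleaner alternative I would try first is the auxiliary sequence $\tilde e_n=E_n$ and a direct proof that $\max_{n+1\le k\le n+N+1}e_k\le q E_n$.

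This yields $E_n\le \Gamma' q^{n/(N+1)}$. Hence $f(x_n)-f(\bar x)\le E_n\le\Gamma\rho^n$ with $\rho=q^{1/(N+1)}$, after enlarging $\Gamma$ to absorb the finitely many initial indices.
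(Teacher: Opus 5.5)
There is a genuine gap at the step you flag as the main obstacle. The recursion $e_{n+1}\le E_n-aC\,e_n$, even together with $e_n\ge 0$ and $E_n\downarrow 0$, does \emph{not} imply geometric decay of $E_n$, so no block argument built on it alone can succeed.

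For a counterexample, take $N=1$, so that $E_n=\max\{e_{n-1},e_n\}$, assume $aC\le 1$, and let $\varepsilon_k\downarrow 0$ be any nonincreasing sequence, for instance $\varepsilon_k=1/\log(k+2)$. Set $e_{2k}=\varepsilon_k$ and $e_{2k+1}=0$. Then
\[
e_{2k+1}=0\le(1-aC)\varepsilon_k=E_{2k}-aCe_{2k}
\quad\text{and}\quad
e_{2k+2}=\varepsilon_{k+1}\le\varepsilon_k=E_{2k+1}-aCe_{2k+1}.
\]
So the recursion holds, yet $E_n$ decays arbitrarily slowly. This is exactly where your dichotomy breaks. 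A large $e_k$ does force $e_{k+1}$ to be small, but $e_k$ itself stays in the window. The small $e_{k+1}$ then gives no descent, so $e_{k+2}$ can again equal $E_{k+1}=e_k$. The underlying defect is that you apply the KL inequality at $x_n$. The descent term then controls $e_n$, which is already in the window, rather than the new value $e_{n+1}$.

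The missing idea is to apply the KL inequality at the new iterate $x_{n+1}$ and bound its gradient by the step that produced it. Since $x_{n+1}-x_n=-\lambda_n\nabla f(x_n)$ and $\nabla f$ is $L$-Lipschitz,
\[
\|\nabla f(x_{n+1})\|\le L\|x_{n+1}-x_n\|+\|\nabla f(x_n)\|\le \Bigl(L+\frac{1}{\lambda_{\min}}\Bigr)\|x_{n+1}-x_n\|\le \lambda_{\max}\Bigl(L+\frac{1}{\lambda_{\min}}\Bigr)\|\nabla f(x_n)\|.
\]
Combine this with your uniformized KL bound at $x_{n+1}$. This gives $a\|\nabla f(x_n)\|^2\ge b\,e_{n+1}$ for some $b>0$ whenever $e_{n+1}>0$, hence $(1+b)e_{n+1}\le E_n$. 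The same inequality holds trivially when $e_{n+1}\le 0$, because $E_n\ge 0$. Since $E_n$ is nonincreasing, each $k\in[n+1,n+N+1]$ satisfies $(1+b)e_k\le E_{k-1}\le E_n$. Therefore $E_{n+N+1}\le E_n/(1+b)$, and the rest of your argument goes through.

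This is essentially what the paper does. It passes to the subsequence $\tilde z_n=x_{m((N+1)n)}$ and obtains monotone descent $f(\tilde z_n)+a\Delta_{n-1}^2\le f(\tilde z_{n-1})$, where $\Delta_{n-1}=\|x_{m((N+1)n)}-x_{m((N+1)n)-1}\|$. It then uses exactly the relative-error bound $\|\nabla f(\tilde z_n)\|\le (L+1/\lambda_{\min})\Delta_{n-1}$ to invoke Lemma~\ref{l:KL}.
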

\begin{proof}
Since $\bar\beta = \bar\gamma =0$, we have for all $n \in \N$, $\beta_n = \gamma_n =0$. Using the definitions of $\rho_n$ and $V$ in Step~\ref{step:main}\ref{step:main_line} and \eqref{eq:V}, we have $\rho_n=0$ and $V(x_{n+1}, x_n,0, \rho_n, \sigma_n) = f(x_{n+1})$. According to Theorem~\ref{t:subseq}\ref{t:subseq_simplify}, there exists $N_0 \in \mathbb{N}$ such 
that, for all $n \geq N_0$, $x_{n+1}=x_{n}-\lambda_n \nabla f(x_n)$ and $f(x_{n+1})+c \|x_{n+1}-x_n\|^2 + d \|\nabla f(x_n)\|^2 \leq \max_{[n-N]_+ \leq k \leq n}f(x_k)$,
which implies that
\begin{equation}\label{eq:cri}
f(x_{n+1})+(c+\frac{d}{\lambda_{\max}^2}) \|x_{n+1}-x_n\|^2 \leq \max_{[n-N]_+ \leq k \leq n}f(x_k).
\end{equation}
We have $c+\frac{d}{\lambda_{\max}^2} >0$ since $\lambda_{\min} >0$. Set $r(n) =m((N+1)n) $ for $n \in \N$. Since $n-N \leq m(n) \leq n$, we have $r(n) = m((N+1)n) \geq (N+1)n -N$, and so $r(n) -1 \geq(N+1)n -N -1 = (N+1)(n-1) \geq m((N+1)(n-1)) = r(n-1)$.
Moreover,
\begin{align}\label{eq:rn}
f(x_{r(n)})& \leq f(x_{m(r(n)-1)} ) -(c+\frac{d}{\lambda_{\max}^2}) \|x_{r(n)} -x_{r(n)-1} \|^2 \notag\\
& \leq f(x_{m((N+1)(n-1))} ) -(c+\frac{d}{\lambda_{\max}^2}) \|x_{r(n)} -x_{r(n)-1} \|^2 \notag\\
&= f(x_{r(n-1)})-(c+\frac{d}{\lambda_{\max}^2}) \|x_{r(n)} -x_{r(n)-1} \|^2,
\end{align}
where the first inequality follows from \eqref{eq:cri}, the second inequality follows from the fact that $(f(x_{m(n)}))_n$ is non-increasing and $r(n)-1 \geq (N+1)(n-1)$. Now set $\tilde{z}_n = x_{r(n)}$ and $\Delta_{n-1} = \|x_{r(n)} -x_{r(n)-1}\|$. It follows from \eqref{eq:rn} that
\begin{equation*}
f(\tilde{z}_n) + (c+\frac{d}{\lambda_{\max}^2})  \Delta_{n-1}^2 \leq f(\tilde{z}_{n-1}).
\end{equation*}
On the other hand,
\begin{align*}
\|\nabla f(\tilde{z}_n)\|=\|\nabla f(x_{r(n)}) \|& \leq \|\nabla f(x_{r(n)}) - \nabla f(x_{r(n)-1})\| + \| \nabla f(x_{r(n)-1})\| \notag\\
& \leq L\|x_{r(n)} -x_{r(n)-1}\| +\frac{1}{\lambda_{r(n)-1}}\|x_{r(n)} -x_{r(n)-1}\| \notag\\
& \leq (L +\frac{1}{\lambda_{\min}}) \Delta_{n-1}.
\end{align*}
According to Theorem~\ref{t:subseq}\ref{t:subseq_bounded}, $(x_n)_{n \in \N}$ is bounded, which implies that $(\tilde{z}_n)_{n \in \N}$ is bounded. 
Since $f$ satisfies the KL property with exponent $\theta \leq \frac{1}{2}$, Lemma~\ref{l:KL} yields that there exist $\Gamma \in \R_{++}$ and $\omega \in (0,1)$ such that, for all $j \in \N$,
\begin{equation}\label{eq:fz}
f(\tilde{z}_j) - f(\bar x) \leq \Gamma \omega^j,
\end{equation}
where $\bar x$ is a cluster point of $(x_n)_{n \in \N}$. Finally, for all $n \in \N$, there exists $j \in \N$, such that $[(N+1)j -N]_{+} \leq n \leq (N+1)j$. Then
\begin{equation*}
f(x_n) \leq  \max_{[(N+1)j -N]_{+} \leq k \leq (N+1)j} f(x_k) =f(x_{m((N+1)j)}) = f(x_{r(j)})=f(\tilde{z}_j)
\end{equation*}
This together with \eqref{eq:fz} implies that $f(x_n) -f(\bar x) \leq  \Gamma \omega^j = \Gamma ( \sqrt[N+1]{\omega})^{j(N+1)} \leq \Gamma{\rho}^n$, where $\rho =\sqrt[N+1]{\omega}$.
\end{proof}

\section{Special choices of parameters}
\label{s:comparison}

We now discuss some special cases of parameters such that the line search criterion~\eqref{eq:linesearch} automatically holds and we show that with suitable choices of parameters, our algorithm reduces to several existing algorithms.

\subsection{Fast gradient algorithm with only dry-like friction}
\label{r:ls}
It is worthwhile mentioning here some instances of AGDNL algorithm without line search step.
\begin{enumerate}
\item Regarding AGDNL algorithm, let $\mu \in \mathbb{R}_{++}$, $\alpha_n \equiv \bar\alpha \in (0, \frac{1}{\mu})$, $\bar\beta =\beta_{-1} \in \R_{+}$, $\beta_n \in [0, \beta_{n-1}]$, $\bar\gamma =\gamma_{-1} \in \R_{+}$, $\gamma_n \in [0,\gamma_{n-1}]$, $L_{\min} =L_{\max}=L$, $\lambda_n \equiv \lambda_{\min}=\lambda \in \R_{+}$, $\kappa_n \equiv \kappa_{\min} =\kappa \in \R_{++}$. 
Suppose that
\begin{enumerate}
\item\label{a:barc}
$\tilde c:=(1 -\mu \bar\alpha) (1 -2 \bar \gamma ) -  \frac{2 L \bar\beta +\bar\gamma + L}{2} \left(2\lambda(1-\mu \bar\alpha)+ \kappa\mu\right)  \geq 0$; 
\item \label{a:bard}
$\tilde d:=2(1- \mu \bar\alpha)(\lambda^2 -\bar\gamma \lambda)-\mu \bar\gamma \kappa+2\mu\lambda\kappa  \geq  0$.
\end{enumerate}
Let $c \in [0,  \frac{\tilde c}{\mu^2 \underline{\nu}}] $, where $\underline{\nu}:=\frac{2 \lambda(1 -\mu \bar\alpha)}{\mu^2}+ \frac{\kappa}{\mu}$.
Let $d \in [0, \frac{\tilde d}{2\mu^2 \underline{\nu}}]$ such that if $\lambda >0$ and $c=0$, then $d>0$.
Suppose that $\bar \gamma=0$ if $c=0$. Then the line search criterion~\eqref{eq:linesearch} holds trivially for all $n \in \N$ and AGDNL algorithm reduces to the following algorithm without line search
\begin{equation*}
\small
\begin{cases}
u_n &= x_n + \beta_n(x_n - x_{n-1}),\\ 
v_n & = x_n + \gamma_n (x_n -x_{n-1}),\\
w_n &\in \prox_{\tau_n \phi}\left(\bar\alpha(x_n -v_{n-1}+\lambda \nabla f(u_{n-1}))  -\theta_n \frac{\nabla f(u_n)}{\|\nabla f(u_n)\|} )-\kappa_n \nabla f(u_n)+ e_n\right ),\\
x_{n+1}\!\!\! &= v_n- \lambda \nabla f(u_n) +\mu w_n,
\end{cases}
\end{equation*}
which we will name as the \emph{fast algorithm with dry-like friction (AGD)}. Indeed, it follows from assumption~\ref{a:barc} that
\begin{equation*}
(1-\mu \bar \alpha) - L\left(\lambda(1-\mu \bar\alpha) + \frac{ \kappa \mu}{2}\right) \geq  (1-\mu \bar\alpha) 2 \bar\gamma  +\frac{2 L \bar\beta +\bar\gamma }{2} \left(2\lambda(1-\mu \bar\alpha)+ \mu \kappa\right) \geq 0,
\end{equation*}
which implies that $\frac{1}{L} > \lambda =\lambda_{\min}$ and  $\bar \kappa = \frac{2(1-\mu \bar\alpha )}{\mu L} -\frac{2(1-\mu \bar\alpha)\lambda}{\mu} \geq \kappa = \kappa_{\min}$. It also can be seen that $c \in [0, \bar c]$ and $d \in [0, \bar d]$. If $c =\bar c$, then $\frac{\tilde c}{\mu^2 \underline{\nu}} = \bar c$, and so $2(1 -\mu \bar \alpha)- \mu^2 L \nu - 2\tilde c  =0$, i.e., $4(1-\mu \bar\alpha)\bar\gamma  +(2 L \bar\beta +\bar\gamma ) \left(2\lambda(1-\mu \bar\alpha)+ \mu \kappa\right) =0$, which yields $\bar \beta =0$ and $\bar \gamma =0$. Similarly, if $d =\bar d$, then $\bar \gamma=0$. Therefore, the conditions of all the parameters in Step~\ref{step:init} and Step~\ref{step:main} hold. On the other hand, by the definition of $c_n$ and $d_n$ in \eqref{d:cn} and \eqref{d:dn}, $c_n \geq \frac{\tilde c}{\mu^2}$ and $d_n \geq \frac{\tilde d}{\mu^2}$. Since $L_{\min}=L_{\max}=L$ and $\beta_n \in [0, \beta_{n-1}]$, we have for all $n \in \N$ that $L_{n} =L_{\max}$, and so $L_n\beta_n\leq L_{n-1}\beta_{n-1}$. Moreover, for all $n \in \N$, $\nu_{n} = \frac{2 \lambda_{\min}(1 -\mu \bar\alpha)}{\mu^2}+ \frac{\kappa_{\min} }{\mu} = \underline{\nu}$. Now, using Lemma~\ref{l:descent} and noting that $d \in [0, \frac{\tilde d}{\mu^2 \underline{\nu}}] $ and $c \in [0,  \frac{\tilde c}{\mu^2 \underline{\nu}}]$, we obtain 
\begin{align*}
  V(z_{n+1}, x_n, w_n,\rho_n, \sigma_n) +c\|z_{n+1} -x_n||^2 + d\|\nabla f(u_n)\|^2 + \frac{\delta}{\underline{\nu}} \| w_n\| \notag\\
   \leq V(x_n, x_{n-1}, w_{n-1},\rho_{n-1}, \sigma_{n-1}),
\end{align*} 
and the line search criterion \eqref{eq:linesearch} is satisfied. On the other hand, in this case, it follows from $\tilde c \geq 0$ that $ 2 (1 -\mu \bar \alpha) \lambda_n + \kappa_n \mu < \frac{2(1-\mu \bar \alpha)}{L}$. However, for AGDNL, we can have $ 2 (1 -\mu \bar \alpha) \lambda_n + \kappa_n \mu \geq \frac{2(1-\mu \bar \alpha)}{L}$. These choices may
help AGDNL speed up the convergence.

\item 
Let $\gamma_n \equiv 0$, $\lambda_n \equiv 0$, $\kappa_n \equiv \kappa \in \R_{++}$, $\alpha_n \equiv \bar\alpha$, $\bar \beta =\beta_{-1} \in \R_+$, and $L_{\max} = L_{\min} =L$. Let $ \beta_n \in [0, \beta_{n-1}]$. It can be seen that the assumption~\ref{a:bard} is automatically satisfied while the assumption~\ref{a:barc} reduces to 
\begin{equation}\label{a:AGD}
1 -\mu  \bar\alpha-\frac{L\mu(2\bar \beta+1) \kappa}{2} \geq 0.
\end{equation}
Let $c \in [0,  \frac{1}{\mu \kappa} (1 -\mu \bar\alpha-\frac{L\mu(2\bar\beta+1) \kappa}{2})]$. Then AGD algorithm reduces to the following algorithm
\begin{equation}\label{AGD}
\begin{cases}
u_n &= x_n + \beta_n(x_n - x_{n-1}),\\ 
w_n &\in \prox_{\tau_n \phi}\left(\bar\alpha(x_n -x_{n-1})  -\theta_n \frac{\nabla f(u_n)}{\|\nabla f(u_n)\|} -\kappa\nabla f(u_n)+ e_n\right ),\\
 x_{n+1}\!\!\! &= x_n +\mu w_n.
\end{cases}
\end{equation}
\end{enumerate} 

\subsection{Comparison with existing algorithms}
The general framework of AGDNL algorithm covers various algorithms in the literature as we will see below.
\begin{enumerate}
\item 
 Let $\beta_n \equiv 0$, $\theta_n \equiv 0$, $e_n \equiv 0$, and $\gamma_n \equiv 0$. Let  $\beta$, $h$ and $\gamma$ are positive parameters and let $\lambda_n \equiv \beta h$, $\mu =h$, $\tau_n \equiv \frac{h}{1+\gamma h}$, $\alpha_n \equiv \frac{1}{h(1+\gamma h)} $, and $\kappa_n \equiv \frac{(1-\gamma \beta)h}{1 +\gamma h}$. Suppose that the assumption in \cite[Theorem 2.1]{AAL21} holds, i.e., $hL \leq \frac{2 \gamma}{\gamma \beta +1}$. Then $\tilde c \geq 0$ and $\tilde d >0$. By Remark~\ref{r:ls}, 
 criterion~\eqref{eq:linesearch} holds trivially for all $n \in \mathbb{N}$ and AGDNL algorithm reduces to the following algorithm 
 \begin{align*}
 x_{n+1} &= x_n -\beta h \nabla f(x_n) \notag\\
 & + h \prox_{\frac{h}{1 +h\gamma}\varphi} \left( \tfrac{1}{h(1+\gamma h)} (x_n -x_{n-1}) +\tfrac{\beta}{1+\gamma h} \nabla f(x_{n-1}) + \tfrac{(\gamma \beta -1)h}{1+\gamma h} \nabla f(x_n)\right),
 \end{align*}
which can be written as
\begin{equation*}
\begin{cases} 
z_n &= \frac{1}{h}(x_n - x_{n-1}) + \beta \nabla f(x_{n-1})\\ 
x_{n+1}\!\!\! &=x_n- \beta h \nabla f(x_n) + h \prox_{\frac{h}{1+\gamma h} \phi} \left( \frac{1}{1+\gamma h} z_n + \frac{(\gamma \beta -1)h}{1 + \gamma h} \nabla f(x_n)\right).
\end{cases}
\end{equation*}
This is IPAHDD-C1 algorithm and so  we can get \cite[Theorem 2.1]{AAL21} by using Theorem~\ref{t:subseq}.

\item Let $h$ and $\gamma$ are positive. Let $\mu = h$, $\lambda_n \equiv 0$, $\gamma_n \equiv 0$, $\beta_n \equiv 0$, $\theta_n \equiv 0$, $\tau_n \equiv \frac{h}{1+ h\gamma}$, $\alpha_n \equiv \frac{1}{h(1+h\gamma)}$, $\kappa_n \equiv \frac{h}{1+ h\gamma}=\kappa_{\min} =\kappa_{\max}$, and $e_n \equiv 0$. Then the algorithm given in \eqref{AGD}, that is a special case of AGDNL, becomes
\begin{equation*}\label{eq:IPGDF}
x_{n+1}=x_n+h\prox_{ \frac{h}{1+ h\gamma}\phi}\left(\frac{1}{h(1+h\gamma)}(x_n -x_{n-1}) -\frac{h}{1+ h\gamma} \nabla f(x_n) \right ),
\end{equation*}
which is IPGDF algorithm in \cite{AA22}. On the other hand, assumption \eqref{a:AGD} becomes $1 - \frac{1}{1 + h\gamma} - \frac{L h^2}{2(1+h \gamma)} \geq 0$, or equivalently, $h \leq \frac{2\gamma}{L}$, which is the assumption in \cite[Theorem 1]{AA22}.  

  Let $\delta =r \kappa_{\min}$. Using Proposition~\ref{p:l0N0}, $x_n \to x_{\infty}$ as $n \to +\infty$ and $-\nabla f(x_{\infty}) \in \phi(0)$. Suppose that  $\|\nabla f(x_{\infty})\| < r$. Then $\|\nabla f(x_{\infty})\| < \frac{\delta}{\kappa_{\min}} = \frac{\delta}{\kappa_{\max}}$. According to Proposition~\ref{p:l0N0}, there exists $N_0 \in \mathbb{N}$ such that for all $n \geq N_0$, $x_n = x_{\infty}$, i.e., $(x_n)_{n \in \N}$ is finitely convergent. Thus, \cite[Theorem~1 and Theorem~2]{AA22} are instances of Proposition~\ref{p:l0N0}.

\item  
Let $h$ and $\gamma$ are positive. Let $\mu = h$, $\lambda_n \equiv 0$, $\gamma_n \equiv 0$, $\beta_n  \equiv \frac{1}{1+h\gamma}$, $\theta_n  \equiv 0$, $\tau_n \equiv  \frac{h}{1+ h\gamma}$, $\alpha_n \equiv \frac{1}{h(1+h\gamma)}$, $\kappa_n \equiv \frac{h}{1+ h\gamma}$, and $e_n \equiv 0$. Then the algorithm given in \eqref{AGD} reduces to
\begin{equation*}	
\begin{cases}
u_n &= x_n + \frac{1}{1+h\gamma}(x_n - x_{n-1})\\
x_{n+1}\!\!\! &=x_n+h\prox_{ \frac{h}{1+ h\gamma}\phi}\left(\frac{1}{h}(u_n -x_n) -\frac{h}{1+ h\gamma} \nabla f(u_n) \right ).
\end{cases}
\end{equation*}
It is IPGDF-NF algorithm in \cite[Theorem 4]{AA22}. Moreover, assumption \eqref{a:AGD} reduces to $1 - \frac{1}{1 + h\gamma} - \frac{L h^2}{2(1+h \gamma)} - \frac{L h^2}{(1+h \gamma)^2} \geq  0$.
Now, we suppose that $h$ and $\gamma$ satisfy the assumption in \cite[Theorem 4]{AA22}, i.e., $h < \frac{2\gamma}{3L}$. Then it can be  that $1 - \frac{1}{1 + h\gamma} - \frac{L h^2}{2(1+h \gamma)} - \frac{L h^2}{(1+h \gamma)^2}
>\frac{3Lh^2}{2(1 + h\gamma)}- \frac{Lh^2}{1 + h\gamma}\left(\frac{1}{2} + \frac{1}{1 +h\gamma}\right) \geq 0$. Thus, $h$ and $\gamma$ satisfy assumption \eqref{a:AGD}, and so \cite[Theorem 4]{AA22} can be obtained by using Proposition~\ref{p:l0N0}.
\end{enumerate}

\section{Adaptation to nonsmooth case}
\label{s:nonsmooth}

In this section, $f\colon \mathcal{H} \to \left(-\infty, +\infty\right]$ is a proper lower semicontinuous function that is possibly nonconvex and nonsmooth. The following lemma collects some important variational properties of the proximal operator and Moreau envelope of $\alpha$-weakly convex functions. From now on, we adopt a convention that $\frac{1}{\alpha} = +\infty$ if $\alpha =0$.

\begin{lemma}[Proximal operator and Moreau envelope of weakly convex functions] 
\label{l:weakcvx}
Suppose that $f\colon \mathcal{H}\to (-\infty, +\infty]$ is a proper lower semicontinuous $\alpha$-weakly convex function and let $\xi \in (0, \frac{1}{\alpha})$. Then the following hold:
\begin{enumerate}
\item\label{l:weakcvx_Lips} 
$\prox_{\xi f}$ is single-valued and $\frac{1}{1 -\xi \alpha}$-Lipschitz continuous on $\mathcal{H}$.
\item\label{l:weakcvx_diff} 
$f_{\xi}$ is continuously differentiable on $\mathcal{H}$ with gradient $\nabla f_{\xi} = \frac{1}{\xi}(\Id -\prox_{\xi f})$ being $L$-Lipschitz continuous, where $\Id$ is the identity mapping and $L=\max\{\frac{\alpha}{1 - \xi \alpha}, \frac{1}{\xi}\}$.
\item\label{l:weakcvx_crit}  
The functions $f$ and $f_{\xi}$ have the same critical points and critical values. 
\end{enumerate}
\end{lemma}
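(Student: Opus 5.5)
The plan is to reduce everything to the convex case through the function $g = f + \frac{\alpha}{2}\|\cdot\|^2$, which is proper, lower semicontinuous and convex. Fix $\xi \in (0,\frac{1}{\alpha})$ and $x\in\mathcal{H}$. Expanding the square in the objective of $\prox_{\xi f}(x)$ gives
\[
f(y) + \tfrac{1}{2\xi}\|x-y\|^2 = g(y) + \tfrac{1-\xi\alpha}{2\xi}\|y\|^2 - \tfrac{1}{\xi}\langle x,y\rangle + \tfrac{1}{2\xi}\|x\|^2 ,
\]
which, after completing the square, is $g(y) + \frac{1}{2\eta}\|y - \frac{x}{1-\xi\alpha}\|^2$ plus a constant depending only on $x$, with $\eta = \frac{\xi}{1-\xi\alpha}>0$. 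Therefore $\prox_{\xi f}(x) = \prox_{\eta g}\big(\frac{x}{1-\xi\alpha}\big)$. Since $g$ is convex, proper and lsc, $\prox_{\eta g}$ is single-valued and firmly nonexpansive. This yields \ref{l:weakcvx_Lips} at once: $\prox_{\xi f}$ is single-valued, and composing the nonexpansive map with scaling by $\frac{1}{1-\xi\alpha}$ gives the Lipschitz constant $\frac{1}{1-\xi\alpha}$.

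For \ref{l:weakcvx_diff}, the same identity gives $f_\xi(x) = g_\eta\big(\frac{x}{1-\xi\alpha}\big) - \frac{\alpha}{2(1-\xi\alpha)}\|x\|^2$; this constant should be checked by plugging in. The convex Moreau envelope $g_\eta$ is Fréchet differentiable with gradient $\frac{1}{\eta}(\Id - \prox_{\eta g})$. The chain rule then gives a $C^1$ function, and simplifying the resulting expression should produce $\nabla f_\xi = \frac{1}{\xi}(\Id - \prox_{\xi f})$. Alternatively, one can verify the formula directly by sandwiching $f_\xi(x') - f_\xi(x)$ between $\langle \frac{1}{\xi}(x-p),x'-x\rangle$ plus or minus $O(\|x'-x\|^2)$, using $p=\prox_{\xi f}(x)$ and $p' = \prox_{\xi f}(x')$ as test points together with the Lipschitz bound from \ref{l:weakcvx_Lips}.

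The Lipschitz constant is the main obstacle, since crudely bounding $\frac{1}{\xi}(\|x-x'\| + \|p-p'\|)$ only gives $\frac{2-\xi\alpha}{\xi(1-\xi\alpha)}$. To sharpen it, write $T = \prox_{\xi f}$. Firm nonexpansiveness of $\prox_{\eta g}$, rescaled, shows that $S := (1-\xi\alpha)T$ is firmly nonexpansive, so $S = \frac{1}{2}(\Id + R)$ with $R$ nonexpansive. Then
\[
\Id - T = \Id - \tfrac{1}{2(1-\xi\alpha)}(\Id+R) = \tfrac{1-2\xi\alpha}{2(1-\xi\alpha)}\Id - \tfrac{1}{2(1-\xi\alpha)}R .
\]
Its Lipschitz constant is at most $\frac{|1-2\xi\alpha| + 1}{2(1-\xi\alpha)}$. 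When $\xi\alpha \le \frac12$ this equals $1$, so $\nabla f_\xi$ is $\frac{1}{\xi}$-Lipschitz. When $\xi\alpha > \frac12$ it equals $\frac{\xi\alpha}{1-\xi\alpha}$, so $\nabla f_\xi$ is $\frac{\alpha}{1-\xi\alpha}$-Lipschitz. Both cases are covered by $\max\{\frac{\alpha}{1-\xi\alpha},\frac{1}{\xi}\}$.

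For \ref{l:weakcvx_crit}, recall that for weakly convex $f$ the limiting subdifferential is $\partial f = \partial g - \alpha\Id$. Optimality for $p = \prox_{\xi f}(x)$, namely $0\in\partial f(p) + \frac{1}{\xi}(p-x)$, which is a convex condition in terms of $g$, gives $\nabla f_\xi(x) = \frac{1}{\xi}(x-p) \in \partial f(p)$. If $\nabla f_\xi(x)=0$, then $p=x$ and $0\in\partial f(x)$, and moreover $f_\xi(x) = f(p) + 0 = f(x)$. Conversely, suppose $0\in\partial f(x)$. Then $p=x$ satisfies the optimality condition, and this condition is sufficient because the prox objective is strongly convex for $\xi<\frac1\alpha$. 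Hence $\prox_{\xi f}(x) = x$, $\nabla f_\xi(x)=0$, and $f_\xi(x)=f(x)$. So the critical points coincide and the critical values agree.
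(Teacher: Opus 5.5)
Your proof is correct. It differs from the paper's only in that the paper gives no argument for this lemma: it cites \cite[Theorem~3.4]{AA93} for (i)--(ii) and \cite[Proposition~3.6]{AA93} for (iii).

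You instead give a self-contained reduction to the convex function $g=f+\frac{\alpha}{2}\|\cdot\|^2$, with $\eta=\frac{\xi}{1-\xi\alpha}$, through two identities:
\begin{itemize}
\item $\prox_{\xi f}(x)=\prox_{\eta g}\bigl(\frac{x}{1-\xi\alpha}\bigr)$;
\item $f_\xi(x)=g_\eta\bigl(\frac{x}{1-\xi\alpha}\bigr)-\frac{\alpha}{2(1-\xi\alpha)}\|x\|^2$.
\end{itemize}
The computations you left as ``should be checked'' do hold. The additive constant is exactly $-\frac{\alpha}{2(1-\xi\alpha)}\|x\|^2$, and the chain rule collapses to $\frac{1}{\xi}(\Id-\prox_{\xi f})$ because $\frac{1}{\xi(1-\xi\alpha)}-\frac{\alpha}{1-\xi\alpha}=\frac{1}{\xi}$.

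The most informative part of your argument is the Lipschitz constant. You show that $(1-\xi\alpha)\prox_{\xi f}$ is firmly nonexpansive, write it as $\frac12(\Id+R)$ with $R$ nonexpansive, and split at $\xi\alpha=\frac12$. This recovers exactly the two branches of $\max\{\frac{\alpha}{1-\xi\alpha},\frac{1}{\xi}\}$, so it explains where this sharp constant comes from rather than importing it.

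For (iii), your argument makes explicit several facts that the citation hides:
\begin{itemize}
\item $\nabla f_\xi(x)\in\partial f(\prox_{\xi f}(x))$;
\item the converse direction, via strong convexity of the prox objective;
\item $f_\xi=f$ at common critical points, which is precisely the ``same critical values'' claim.
\end{itemize}
The only nonconvex ingredient you need is the exact sum rule $\partial f=\partial g-\alpha\Id$ for the limiting subdifferential. It is valid under the paper's definitions because $\frac{\alpha}{2}\|\cdot\|^2$ is $C^1$ and the limiting and convex subdifferentials of the convex lsc function $g$ coincide.

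In short, the paper's citation buys brevity, while your route buys transparency and independence from the external reference, using only standard convex facts: a single-valued, firmly nonexpansive prox and a differentiable convex Moreau envelope.
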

\begin{proof}
\ref{l:weakcvx_Lips} and \ref{l:weakcvx_diff} follow from \cite[Theorem~3.4]{AA93}, while \ref{l:weakcvx_crit} follows from \cite[Proposition~3.6]{AA93}.
\end{proof}

Exploiting the properties in Lemma~\ref{l:weakcvx}, we can solve weakly convex nonsmooth optimization problems by applying our AGDNL algorithm to the Moreau envelope of the objective function.
 
\begin{theorem}[AGDNL for nonsmooth case]
\label{t:DFGp}
Suppose that $f\colon \mathcal{H}\to (-\infty, +\infty]$ is a proper lower semicontinuous $\alpha$-weakly convex function. Let $\xi \in (0, \frac{1}{\alpha})$ and let $(x_n)_{n\in \mathbb{N}}$ be the sequence generated by AGDLN algorithm applied to $f_{\xi}$. Suppose also that $f$ is bounded below and that the set $\{x \in \mathcal{H}: f_{\xi}(x) \leq f_{\xi}(x_0) +\sigma_{-1}\|w_{-1}\|^2 \}$ is bounded. Then the following hold:
\begin{enumerate}
\item $(x_n)_{n \in \N}$ is bounded, $\lim_{n \to +\infty}\|x_{n+1}-x_n\| = 0$, and $\lim_{n \to +\infty}\|w_n\| = 0$.
\item If $\lambda_{\min} >0$, then every strong cluster point of $(x_n)_{n \in \N}$ is a critical point of $f$.
\item Suppose that $\mathcal{H}$ is finite dimensional, that $\lambda_{\min} >0$, and that one of the following holds:
\begin{enumerate}
\item \label{ADFG_N:i}  $N=0$ and $ g^{(\xi)}(x,y,s):= f_{\xi}(x)+ \rho \|x -y\|^2 $ satisfies the KL property at $(\bar x, \bar x, \rho)$ for all $\bar x \in \mathcal{H}$ and $\rho \in [0, \bar\rho]$, where $\bar\rho =\frac{1}{\mu^2}(1 -\mu \bar\alpha)\frac{\bar\gamma}{\underline{\nu}} + \frac{ L_{\max}\bar\beta +\bar\gamma}{2 }$.
\item \label{ADFG_N:ii} $\frac{1}{\lambda_{\max} }- L \bar\beta -\frac{\bar\gamma}{\lambda_{\min}}-\frac{L}{2} >0$ and $h^{(\xi)}(x,y): =f_{\xi}(x) +(\frac{L \bar\beta}{2} +\frac{\bar\gamma}{2 \lambda_{\min} }) \|x-y\|^2$ satisfies the KL property at $(\bar x, \bar x)$  for all $\bar x \in \mathcal{H}$.
\end{enumerate}
 Then $\sum_{k=0}^{+\infty} \|x_{n+1} -x_n\| < +\infty$ and $(x_n)_{n\in \N}$ converges to a critical point $x^*$ of $f$. Moreover, if the KL property in \ref{ADFG_N:i} or \ref{ADFG_N:ii} is satisfied with exponent $\theta \leq 1/2$, then there exist $\Gamma \in \R_{++}$ and $\zeta \in (0,1)$ such that, for all $n \in \N$, $\|x_n -x^*\| \leq \Gamma\zeta^{n/2}$.
\end{enumerate}
\end{theorem}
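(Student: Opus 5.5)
The plan is to reduce everything to the smooth results of Section~\ref{s:main} applied to the function $f_{\xi}$, using Lemma~\ref{l:weakcvx}. By Lemma~\ref{l:weakcvx}\ref{l:weakcvx_diff}, $f_{\xi}\colon\mathcal{H}\to\R$ is differentiable with $L$-Lipschitz gradient, $L=\max\{\frac{\alpha}{1-\xi\alpha},\frac{1}{\xi}\}$; this is the constant implicitly used in \ref{ADFG_N:ii}. Moreover, $f_{\xi}$ is bounded below, since $f_{\xi}(x)\geq \inf_{\mathcal{H}} f>-\infty$ because the quadratic term is nonnegative. The level set hypothesis is stated directly in terms of $f_{\xi}$. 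Hence all standing assumptions of Theorem~\ref{t:subseq} hold with $f$ replaced by $f_{\xi}$.

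For (i), I will invoke Theorem~\ref{t:subseq}\ref{t:subseq_bounded} and \ref{t:subseq_limit} for $f_{\xi}$. These give boundedness of $(x_n)_{n\in\N}$, $\|x_{n+1}-x_n\|\to 0$ and $\|w_n\|\to 0$.

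For (ii), take $\lambda_{\min}>0$. Theorem~\ref{t:subseq}\ref{t:subseq_simplify} applied to $f_{\xi}$ gives $\nabla f_{\xi}(\bar x)=0$ for every strong cluster point $\bar x$. Since $f_\xi$ is $C^1$, its limiting subdifferential at any point is $\{\nabla f_\xi\}$, so $\bar x$ is a critical point of $f_{\xi}$. Lemma~\ref{l:weakcvx}\ref{l:weakcvx_crit} then makes $\bar x$ a critical point of $f$.

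For (iii), assume $\mathcal{H}$ is finite dimensional. Under \ref{ADFG_N:i}, the function $g^{(\xi)}$ is exactly the function $g$ of Theorem~\ref{t:fullseq}\ref{ADFGe:i} with $f_{\xi}$ in place of $f$. Under \ref{ADFG_N:ii}, the function $h^{(\xi)}$ is exactly the $h$ of Theorem~\ref{t:fullseq}\ref{ADFGe:ii}, and the inequality $\frac{1}{\lambda_{\max}}-L\bar\beta-\frac{\bar\gamma}{\lambda_{\min}}-\frac{L}{2}>0$ is the same condition. Theorem~\ref{t:fullseq} therefore yields $\sum\|x_{n+1}-x_n\|<+\infty$ and $x_n\to x^*$ with $\nabla f_{\xi}(x^*)=0$. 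It also yields the rate $\|x_n-x^*\|\leq\Gamma\zeta^{n/2}$ when the exponent satisfies $\theta\leq 1/2$. Lemma~\ref{l:weakcvx}\ref{l:weakcvx_crit} again converts $x^*$ into a critical point of $f$.

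The main obstacle is bookkeeping rather than analysis. I must confirm that the Lipschitz constant $L$ appearing in the algorithm's parameters ($L_{\min}\le L\le L_{\max}$) and in \ref{ADFG_N:ii} is the one from Lemma~\ref{l:weakcvx}\ref{l:weakcvx_diff}. I must also confirm that "critical point" for the nonsmooth $f$ means $0\in\partial f$ in the limiting sense, which matches the notion used in \cite[Proposition~3.6]{AA93}. If that reference only gives the statement for Fréchet stationarity, I would note that for weakly convex $f$ the Fréchet and limiting subdifferentials coincide, because $f+\frac{\alpha}{2}\|\cdot\|^2$ is convex.
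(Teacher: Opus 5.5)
Your proposal is correct and follows the paper's own argument. The paper likewise applies the smooth results to $f_{\xi}$, using Lemma~\ref{l:weakcvx}\ref{l:weakcvx_diff} for the Lipschitz gradient and $f_{\xi}\geq \inf f$ for boundedness below, and then transfers critical points via Lemma~\ref{l:weakcvx}\ref{l:weakcvx_crit}. You are slightly more careful than the paper: it cites only Theorem~\ref{t:fullseq}, whereas parts (i) and (ii) actually come from Theorem~\ref{t:subseq}, as you note.
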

\begin{proof}
Since $f$ is bounded below, the definition of $f_{\xi}$ implies that $f_{\xi}$ is also bounded below. The conclusion now follows from Theorem~\ref{t:fullseq} with noting from Lemma~\ref{l:weakcvx}\ref{l:weakcvx_crit} that $f$ and $f_{\xi}$ have the same critical points. 
\end{proof}

In Theorem~\ref{t:DFGp}, the KL property of $g^{(\xi)}$ and $h^{(\xi)}$ plays an important role for the convergence analysis of the whole sequence generated by AGDNL algorithm. According to \cite[Proposition~4.1]{Yan21} and \cite[Theorem~3.6]{LP18}, if $f_{\xi}$ satisfies the KL property with exponent $\theta \in [\frac{1}{2},1)$, then $g^{(\xi)}$ and $h^{(\xi)}$ satisfy the KL property with exponent $\theta$. In the sequel, we will provide a sufficient condition for a proper lower semicontinuous function $f$ such that $f_{\xi}$ satisfies the KL property. In order to do this, we first establish a result on the KL property for marginal functions, for which we recall that a set-valued mapping $P\colon \mathcal{H} \rightrightarrows \mathcal{H}$ is \emph{inner semicontinuous} at $(x,y)$ with $y \in P(x)$ if for every sequence $x_n \to  x$, there is a sequence 
$y_n \to y$ with $y_n \in P(x_n)$.

\begin{lemma}[KL property for marginal functions]
\label{l:margin}
Let $\mathcal{K}$ be a real Hilbert space, let $F: \mathcal{H} \times \mathcal{K} \to (-\infty, +\infty]$ be a proper lower semicontinuous function, and let $C$ be a nonempty set in $\mathcal{K}$. For each $x \in \mathcal{H}$, set $h(x) =\inf_{y\in C} F(x,y)$ and $P(x)= \{y \in C: F(x,y) = h(x)\}$.
\begin{enumerate}
\item\label{l:margin_incl}
Let $x \in \mathcal{H}$ and suppose that $h$ is finite at $x$, and that $P$ is inner semicontinuous at $(x,y_{x})$ for some $y_{x} \in P(x)\cap \inte C$. Then $\partial h(x) \subseteq \left\{x^*: (x^*,0) \in \partial F(x,y_x)\right\}$.
    
\item\label{l:margin_KL}
Let $\bar x \in \mathcal{H}$ and suppose that there exists $\varepsilon \in \mathbb{R}_{++}$ such that for all $x \in B(\bar x; \varepsilon)$, $h$ is finite at $x$ and $P$ is inner semicontinuous at $(x,y_{x})$ for some $y_{x} \in P(x)\cap \inte C$. Suppose further that there exist $\bar y \in P(\bar x)$, $c \in \mathbb{R}_{++}$, $\theta \in \mathbb{R}_{++}$, and $\nu \in (0,+\infty]$ such that
\begin{equation}\label{eq:KLF}
\dist(0,\partial F(x,y_x)) \geq c(F(x,y_x) -F(\bar x,\bar y))^{\theta}
\end{equation}
whenever $x \in B(\bar x; \varepsilon)$ with $F(\bar x,\bar y) < F(x,y_x) < F(\bar x,\bar y) +\nu$. Then $h$ satisfies the KL property at $\bar x$ with exponent $\theta$.
\end{enumerate}
\end{lemma}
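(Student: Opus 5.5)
For part (i), I would work at the level of Fréchet subgradients first and then pass to limits. Take $x^* \in \hat\partial h(x)$. I will show that $(x^*,0) \in \hat\partial F(x,y_x)$. By definition of $h$, for all $(x',y')$ with $y' \in C$ we have $F(x',y') \geq h(x')$, and $F(x,y_x)=h(x)$. Hence
\begin{equation*}
F(x',y') - F(x,y_x) - \langle x^*, x'-x\rangle \geq h(x') - h(x) - \langle x^*, x'-x\rangle .
\end{equation*}
Since $y_x \in \inte C$, every $(x',y')$ close to $(x,y_x)$ has $y' \in C$. Dividing by $\|(x',y')-(x,y_x)\| \geq \|x'-x\|$ and taking $\liminf$ then gives $(x^*,0)\in\hat\partial F(x,y_x)$. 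The division needs a little care: when the right-hand numerator is negative, use $\|x'-x\|$; otherwise the inequality is trivial.

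For limiting subgradients $x^*\in\partial h(x)$, take $x_n\to x$ with $h(x_n)\to h(x)$ and $x_n^*\in\hat\partial h(x_n)$, $x_n^*\to x^*$. By inner semicontinuity of $P$ at $(x,y_x)$ there are $y_n\in P(x_n)$ with $y_n\to y_x$. For large $n$ these lie in $\inte C$, so the Fréchet step applies at $(x_n,y_n)$. That step only used $y_n\in P(x_n)\cap\inte C$, not the specific choice of $y_{x_n}$. It yields $(x_n^*,0)\in\hat\partial F(x_n,y_n)$. Moreover $F(x_n,y_n)=h(x_n)\to h(x)=F(x,y_x)$, so $(x^*,0)\in\partial F(x,y_x)$ by the definition of the limiting subdifferential.

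For part (ii), I fix $x\in B(\bar x;\varepsilon)$ with $h(\bar x)<h(x)<h(\bar x)+\nu$. Because $\bar y\in P(\bar x)$ and $y_x\in P(x)$, we have $F(\bar x,\bar y)=h(\bar x)$ and $F(x,y_x)=h(x)$. The hypothesis \eqref{eq:KLF} therefore applies and gives $\dist(0,\partial F(x,y_x))\geq c(h(x)-h(\bar x))^\theta$.

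By (i), every $x^*\in\partial h(x)$ satisfies $(x^*,0)\in\partial F(x,y_x)$. With the product norm, $\|x^*\|=\|(x^*,0)\|\geq \dist(0,\partial F(x,y_x))$. Taking the infimum over $x^*$ gives $\dist(0,\partial h(x))\geq c(h(x)-h(\bar x))^\theta$. If $\partial h(x)=\varnothing$ the distance is $+\infty$, so the inequality holds trivially.

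It remains to check that $\bar x\in\dom\partial h$, as the definition of the KL property requires, and that $\theta<1$. I would handle $\theta$ by reading the exponent in the generalized sense, or by noting it is inherited from \eqref{eq:KLF}. The main obstacle I expect is the membership $\bar x\in\dom\partial h$: it is not directly given. I would either adopt the inequality form of the KL exponent as the definition, or observe that the paper's usage only needs the inequality. The core argument is the subdifferential inclusion in (i).
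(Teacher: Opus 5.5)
Your proof is correct. Part (ii) is the paper's argument, but for part (i) you take a different, more elementary route.

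The paper sets $G=F+\delta_{\mathcal{H}\times C}$. It applies the sum rule \cite[Theorem~3.36]{Mor06}, using that $\delta_{\mathcal{H}\times C}$ is locally Lipschitz with $\partial\delta_{\mathcal{H}\times C}(x,y_x)=\{0\}$, to get $\partial G(x,y_x)\subseteq\partial F(x,y_x)$. It then cites the marginal-function inclusion \cite[Theorem~1.108(i)]{Mor06} for $h=\inf_y G(\cdot,y)$.

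You instead prove the inclusion directly from the definitions, in two steps. First you show $\hat\partial h(x)\subseteq\{x^*:(x^*,0)\in\hat\partial F(x,y)\}$ for every $y\in P(x)\cap\inte C$, with no semicontinuity needed. Then you pass to limiting subgradients: inner semicontinuity supplies $y_n\in P(x_n)$ with $y_n\to y_x$, eventually in $\inte C$, and $F(x_n,y_n)=h(x_n)\to F(x,y_x)$ gives the required attentive convergence.

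Your version makes visible what the citations hide. Because $y_x\in\inte C$, $F$ and $G$ coincide near $(x,y_x)$, so the sum rule reduces to locality of the subdifferential. Inner semicontinuity is used only in the Fr\'echet-to-limiting step. Your argument also works directly with the paper's definition of $\partial$ via norm limits. Mordukhovich's results are formulated for the basic subdifferential defined through weak$^*$ sequential limits, so you avoid the (minor) question of whether they transfer verbatim. The paper's route is shorter and fits the general marginal-function calculus, which is what one would need if $y_x$ were allowed on the boundary of $C$.

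Your caveat in (ii) is fair. The paper likewise proves only the inequality and does not check $\bar x\in\dom\partial h$ or $\theta<1$. Both do hold where the lemma is applied, in Theorem~\ref{t:EnvKL}.
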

\begin{proof}
\ref{l:margin_incl}: Let $G(x,y) =F(x,y) +\delta_C(y) =F(x,y) +\delta_{\mathcal{H}\times C}(x,y)$. Since $y_x \in \inte C$, we have that $\delta_{\mathcal{H}\times C}$ is Lipschitz continuous around $(x, y_x)$ and $\partial \delta_{\mathcal{H}\times C}(x,y_x) =\{0\}$. Noting that $F(x, y_x) =h(x) < +\infty$ and using \cite[Theorem~3.36]{Mor06} along with its following remark, we obtain that $\partial G(x,y_x) \subseteq \partial F(x,y_x) +\partial \delta_{\mathcal{H}\times C}(x,y_x) =\partial F(x,y_x)$. Now, by \cite[Theorem~1.108(i)]{Mor06}, $\partial h(x) \subseteq \left\{x^*: (x^*,0) \in \partial G(x,y_x)\right\} \subseteq \left\{x^*: (x^*,0) \in \partial F(x,y_x)\right\}$.

\ref{l:margin_KL}: 
It follows from \ref{l:margin_incl} that for all $x \in B(\bar x; \varepsilon)$,
\begin{equation}\label{eq:inclusion}
\partial h(x) \subseteq \left\{x^*: (x^*,0) \in \partial F(x,y_x)\right\}. 
\end{equation}
Now, let $x\in B(\bar x;\varepsilon)$ with $h(\bar{x}) <h(x) <h(\bar{x}) +\nu$. We note from $\bar y \in C(\bar x)$ and $y_x \in P(x)$ that $h(\bar{x}) =F(\bar{x},\bar{y})$ and $h(x) =F(x,y_x)$, and so $F(\bar{x},\bar{y}) < F(x,y_x) < F(\bar{x},\bar{y}) +\nu$. Therefore, we derive from \eqref{eq:inclusion} that
\begin{align*}
\dist (0,\partial h(x)) &\geq \inf \left\{\|x^*\| :(x^*,0) \in \partial F(x,y_x) \right\} \\
&\geq \dist(0, \partial F(x,y_x)) \\
&\geq c (F(x,y_x) -F(\bar x,\bar y))^{\theta} = c (  h(x) -h(\bar x))^{\theta},
\end{align*}
where the last inequality follows from \eqref{eq:KLF}. The proof is complete.
\end{proof}

The following result can be found in \cite[Remark~3.2(b)]{ABRS10} and \cite[Lemma~2.1]{LP18} with a finite-dimensional setting. For self-containedness, we include here a proof which follows the argument in the proof of \cite[Lemma~2.1]{ABRS10}.

\begin{lemma}[KL property at noncritical points]
\label{l:noncrit}
Let $h\colon \mathcal{H}\to (-\infty,+\infty]$ be a proper lower semicontinuous function and let $\bar x \in \dom \partial h$. Suppose that $0 \notin \partial h(\bar x)$. Then $h$ satisfies the KL property at $\bar x$ with exponent $\theta \in [0,1)$.
\end{lemma}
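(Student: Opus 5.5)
The plan is to show that $\dist(0,\partial h(x))$ stays bounded away from zero on a small neighborhood of $\bar x$ intersected with a small sublevel band. Once this holds, the KL inequality with exponent $\theta$ follows immediately, because $(h(x)-h(\bar x))^{\theta}$ is small in that band. Concretely, I will look for $\varepsilon,\nu,c>0$ such that $\dist(0,\partial h(x))\ge c\,(h(x)-h(\bar x))^{\theta}$ whenever $x\in B(\bar x;\varepsilon)$ and $h(\bar x)<h(x)<h(\bar x)+\nu$. This corresponds to $\varphi(s)=\gamma s^{1-\theta}$ with $\gamma=\frac{1}{c(1-\theta)}$, which is continuous, concave, vanishes at $0$ and has $\varphi'>0$ on $(0,\nu)$.

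\textbf{Step 1 (uniform lower bound on subgradients).} I will argue by contradiction that there exist $\varepsilon>0$, $\eta>0$ and $\nu_0>0$ such that $\dist(0,\partial h(x))\ge \eta$ for all $x\in B(\bar x;\varepsilon)$ with $|h(x)-h(\bar x)|<\nu_0$. If this fails, there are sequences $x_n\to\bar x$ with $h(x_n)\to h(\bar x)$ and $u_n\in\partial h(x_n)$ with $\|u_n\|\to 0$. The closedness of the limiting subdifferential then gives $0\in\partial h(\bar x)$, contradicting the hypothesis. Here I take the convention $\dist(0,\varnothing)=+\infty$.

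This closedness is the step I expect to be the main obstacle, since $\mathcal{H}$ may be infinite dimensional. In a general Hilbert space the limiting subdifferential as defined above need not be norm-to-norm closed along $h$-attentive sequences, although it is a diagonal limit of Fr\'echet subgradients. I would handle it by a diagonal argument. For each $n$, pick $x_n'$ close to $x_n$ with $|h(x_n')-h(x_n)|<1/n$ and $u_n'\in\hat\partial h(x_n')$ with $\|u_n'-u_n\|<1/n$. This is possible directly from the definition of $\partial h(x_n)$. Then $x_n'\to\bar x$, $h(x_n')\to h(\bar x)$, and $u_n'\to 0$ strongly, so $0\in\partial h(\bar x)$ by the very definition of $\partial h$, since the definition uses strong convergence of Fr\'echet subgradients. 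Thus no weak topology is needed and the contradiction goes through.

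\textbf{Step 2 (conclusion).} Choose $\nu=\min\{\nu_0,1\}$ and $c=\eta$. For $x\in B(\bar x;\varepsilon)$ with $h(\bar x)<h(x)<h(\bar x)+\nu$, we have $0<h(x)-h(\bar x)<1$, so $(h(x)-h(\bar x))^{\theta}\le 1$ for $\theta\in[0,1)$. Hence $\dist(0,\partial h(x))\ge\eta\ge c\,(h(x)-h(\bar x))^{\theta}$. This is the KL property with exponent $\theta$; the case $\theta=0$ is included, since then the right-hand side is simply $c$.
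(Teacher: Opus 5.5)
Your proof is correct and is essentially the paper's argument: a contradiction argument gives a uniform lower bound $\dist(0,\partial h(x))\ge c>0$ for $x$ near $\bar x$ with $|h(x)-h(\bar x)|$ small, and the exponent-$\theta$ inequality then follows at once. Your diagonal argument spells out the closedness step that the paper compresses into ``letting $n\to+\infty$''. In fact it shows that this strongly-defined limiting subdifferential \emph{is} closed along $h$-attentive sequences, so your caveat that it ``need not be'' closed is unnecessary.
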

\begin{proof}
It suffices to prove that there exists $c \in \mathbb{R}_{++}$ such that $\dist(0, \partial h(x)) \geq c$ whenever $\|x -\bar x\| \leq c$ and $|h(x) -h(\bar x)| \leq c$. Suppose on the contrary that this is not true. We then find a sequence $(x_n)_{n \in \mathbb{N}}$ such that $\|x_n -\bar x\| \leq 1/n$ and $|h(x_n) -h(\bar x)| \leq 1/n$ but $\dist(0, \partial h(x_n)) < 1/n$. It follows that $x_n \to \bar x$ and $h(x_n) \to h(\bar x)$ as $n\to +\infty$ and that, for each $n\in \mathbb{N}$, there exists $u_n \in \partial h(x_n)$ with $\|u_n\| < 1/n$. Letting $n\to +\infty$, we drive that $0 \in \partial h(\bar x)$, which contradicts the assumption.
\end{proof}

\begin{theorem}[Exponent for Moreau envelope of KL functions]
\label{t:EnvKL}
Suppose that $f\colon \mathcal{H}\to (-\infty, +\infty]$ is a proper lower semicontinuous function and let $\xi \in \mathbb{R}_{++}$. 
\begin{enumerate}
\item\label{t:EnvKL_point} 
Let $\bar x \in \dom \partial f\cap \dom \partial f_\xi$. Suppose that $f$ satisfies the KL property at $\bar x$ with exponent $\theta \in [\frac{1}{2},1)$ and that if $0 \in \partial f_{\xi}(\bar x)$, then $\prox_{\xi f}(\bar x) = \bar x$ and $\prox_{\xi f}$ is single-valued and Lipschitz continuous around $\bar x$. Then $f_{\xi}$ satisfies the KL property at $\bar{x}$ with exponent $\theta$.
\item\label{t:EnvKL_full} 
Suppose that $f$ is KL function with exponent $\theta \in [\frac{1}{2},1)$ and that for every critical point $\bar x$ of $f_{\xi}$, $\prox_{\xi f}(\bar x) = \bar x$ and $\prox_{\xi f}$ is single-valued and Lipschitz continuous around $\bar x$. Then $f_{\xi}$ is a KL function with exponent $\theta$.
\end{enumerate}
\end{theorem}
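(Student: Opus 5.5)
Part (ii) follows from part (i) applied at every point of $\dom\partial f_\xi$, so the work is in (i). I split into two cases according to whether $\bar x$ is critical for $f_\xi$.

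\emph{Noncritical case.} If $0\notin\partial f_\xi(\bar x)$, then Lemma~\ref{l:noncrit} applies directly. The Moreau envelope $f_\xi$ is proper and lower semicontinuous here: $\bar x\in\dom\partial f_\xi$, and $f_\xi$ is finite, being bounded above by $f(y)+\frac{1}{2\xi}\|\cdot-y\|^2$ for any $y\in\dom f$. Hence $f_\xi$ has the KL property at $\bar x$ with any exponent in $[0,1)$, in particular with $\theta$.

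\emph{Critical case.} If $0\in\partial f_\xi(\bar x)$, the hypotheses give $\prox_{\xi f}(\bar x)=\bar x$, and $\prox_{\xi f}$ is single-valued and $\ell$-Lipschitz on some ball $B(\bar x;\varepsilon_0)$. I would apply Lemma~\ref{l:margin} with
\[
\mathcal K=\mathcal H,\quad C=\mathcal H,\quad F(x,y)=f(y)+\tfrac{1}{2\xi}\|x-y\|^2,
\]
so that $h=f_\xi$ and $P=\prox_{\xi f}$. Since $C=\mathcal H$, the interiority condition is automatic. For $x$ near $\bar x$ take $y_x=\prox_{\xi f}(x)$; single-valuedness and continuity of $P$ near $\bar x$ give inner semicontinuity at $(x,y_x)$. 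Take $\bar y=\bar x$, so $F(\bar x,\bar y)=f(\bar x)=f_\xi(\bar x)$.

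It remains to verify \eqref{eq:KLF}. By the sum rule with a smooth term,
\[
\partial F(x,y)=\Bigl\{\bigl(\tfrac1\xi(x-y),\,v+\tfrac1\xi(y-x)\bigr): v\in\partial f(y)\Bigr\}.
\]
At $y=y_x$ the optimality condition gives $\tfrac1\xi(x-y_x)\in\partial f(y_x)$. Hence $\dist(0,\partial F(x,y_x))$ is comparable (up to constants) to
\[
\|x-y_x\|+\dist\bigl(0,\partial f(y_x)\bigr).
\]
Next, expand the gap:
\[
F(x,y_x)-F(\bar x,\bar x)=\bigl(f(y_x)-f(\bar x)\bigr)+\tfrac{1}{2\xi}\|x-y_x\|^2.
\]
The quadratic piece is bounded by $\tfrac1{2\xi}\|x-y_x\|^2\le C\|x-y_x\|^{1/\theta}$ when $\|x-y_x\|\le 1$, using $1/\theta\le 2$. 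This is exactly where $\theta\ge\frac12$ is needed.

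For the $f$ piece, $\|y_x-\bar x\|=\|\prox_{\xi f}(x)-\prox_{\xi f}(\bar x)\|\le\ell\|x-\bar x\|$, so $y_x$ lies in the KL neighbourhood of $f$ at $\bar x$ once $x$ is close enough. There are three sub-cases.
\begin{itemize}
\item If $f(y_x)>f(\bar x)$, then KL of $f$ gives $(f(y_x)-f(\bar x))^\theta\le c^{-1}\dist(0,\partial f(y_x))$.
\item If $f(y_x)\le f(\bar x)$, the $f$-piece is nonpositive and can be dropped.
\item The upper bound $f(y_x)<f(\bar x)+\nu$ holds because $f(y_x)\le F(x,y_x)=f_\xi(x)<f_\xi(\bar x)+\nu=f(\bar x)+\nu$.
\end{itemize}
Combining via $(a+b)^\theta\le a^\theta+b^\theta$ yields
\[
\bigl(F(x,y_x)-F(\bar x,\bar x)\bigr)^\theta\le C'\,\dist\bigl(0,\partial F(x,y_x)\bigr),
\]
which is \eqref{eq:KLF}. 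Lemma~\ref{l:margin}\ref{l:margin_KL} then gives the KL property of $f_\xi$ at $\bar x$ with exponent $\theta$.

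\emph{Expected obstacle.} The delicate step is the comparison of $\dist(0,\partial F(x,y_x))$ with $\dist(0,\partial f(y_x))+\|x-y_x\|$. I need a lower bound of the form
\[
\|a\|+\|v-a\|\ge\tfrac12\bigl(\|a\|+\|v\|\bigr),\qquad a=\tfrac1\xi(x-y_x),
\]
which follows from the triangle inequality. This in turn requires $\partial f(y_x)\ne\varnothing$, which the optimality condition guarantees.
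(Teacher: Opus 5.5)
Your proposal is correct. Its skeleton matches the paper's: Lemma~\ref{l:noncrit} handles noncritical points, and at critical points you apply Lemma~\ref{l:margin} with $F(x,y)=f(y)+\frac{1}{2\xi}\|x-y\|^2$, $C=\mathcal H$, $P=\prox_{\xi f}$ and $\bar y=\bar x$, using Lipschitz continuity of the prox to keep $y_x$ near $\bar x$.

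The genuine difference is how you obtain \eqref{eq:KLF}. The paper cites \cite[Theorem~3.6]{LP18}, asserting that its finite-dimensional proof carries over to Hilbert spaces. That gives the full KL property of $F$ at $(\bar x,\bar x)$ with exponent $\theta$ on a whole neighbourhood, which the paper then restricts to points $(x,\prox_{\xi f}(x))$. You instead verify the inequality only along the graph of $\prox_{\xi f}$, which is all Lemma~\ref{l:margin} needs. Your steps are:
\begin{itemize}
\item compute $\partial F$ by the smooth sum rule;
\item use the triangle-inequality bound $\|a\|+\|v-a\|\ge\frac12(\|a\|+\|v\|)$;
\item split on the sign of $f(y_x)-f(\bar x)$, with $\theta\ge\frac12$ entering through $\|x-y_x\|^{2\theta}\le\|x-y_x\|$;
\item get the upper bound $f(y_x)<f(\bar x)+\nu$ from $f(y_x)\le f_\xi(x)$.
\end{itemize}
This is essentially the Li--Pong argument specialised to the prox graph. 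It makes the proof self-contained and removes any reliance on the unverified infinite-dimensional extension of an external result. The paper's route buys brevity and a stronger statement about $F$, namely KL at all nearby $(x,y)$, which is not needed.

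Two small points need tightening.

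First, (ii) does not follow from (i) verbatim, because (i) assumes $\bar x\in\dom\partial f$. At a noncritical point of $f_\xi$, invoke Lemma~\ref{l:noncrit} directly. At a critical point, you must check $\bar x\in\dom\partial f$ before you can use that $f$ is a KL function there. This follows from $\bar x=\prox_{\xi f}(\bar x)$ and the optimality condition you already use, which yields $0\in\partial f(\bar x)$; it is exactly the paper's claim \eqref{eq:ranprox}.

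Second, your justification that $f_\xi$ is proper and lower semicontinuous only shows $f_\xi<+\infty$; lower semicontinuity is never argued. This is harmless, since the proof of Lemma~\ref{l:noncrit} only uses the closedness built into the limiting subdifferential, not lower semicontinuity. The paper applies that lemma to $f_\xi$ in the same way.
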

\begin{proof}
We first claim that, for all $x \in \mathcal{H}$, 
\begin{equation}\label{eq:ranprox}
\prox_{\xi f}(x) \subseteq \dom \partial f.    
\end{equation}
Indeed, we can assume that $\prox_{\xi f}(x) \neq \varnothing$ as the claim is trivial otherwise. Let $p \in \prox_{\xi f}(x)$. Then $p$ is a minimizer of $f +\frac{1}{2\xi}\|\cdot-x\|^2$, which yields $0 \in \partial f(p) +\frac{1}{\xi}(p-x)$, and so $p \in \dom \partial f$. This confirms \eqref{eq:ranprox}.

\ref{t:EnvKL_point}: In view of Lemma~\ref{l:noncrit}, it suffices to assume that $0 \in \partial f_{\xi}(\bar x)$. Then $\prox_{\xi f}(\bar x) = \bar x$ and there exist $\bar \varepsilon \in \mathbb{R}_{++}$ and $K \in [1,+\infty)$ such that $\prox_{\xi f}$ is single-valued and $K$-Lipschitz continuous on $B(\bar x; \bar \varepsilon)$. As $f$ satisfies the KL property at $\bar{x}$ with exponent $\theta \in [\frac{1}{2},1)$, by applying \cite[Theorem 3.6]{LP18} (whose proof is still valid in Hibert spaces), $F(x,y) =f(y) +\frac{1}{2\xi}\|x-y\|^2$ satisfies the KL property at $(\bar{x},\bar{x})$ with exponent $\theta$, i.e., there exist $c \in \mathbb{R}_{++}$, $\tilde \varepsilon \in (0, \bar \varepsilon]$, and $\nu \in (0,+\infty]$ such that for all $x \in B(\bar x; \tilde \varepsilon)$, $y \in B(\bar x; \tilde \varepsilon) \cap \dom \partial f$, and $F(\bar{x},\bar{x})  \leq F(x,y) < F(\bar{x},\bar{x}) +\nu$, one has
\begin{equation}\label{eq:KLF1P}
\dist(0,\partial F(x,y)) \geq c (F(x,y) -F(\bar{x},\bar{x}))^{\theta}. 
\end{equation}
Let $\varepsilon = \frac{\tilde \varepsilon}{K} \leq \tilde \varepsilon \leq \bar \varepsilon $. Then, for all $x \in B(\bar x, \varepsilon) \subseteq B(\bar x, \tilde \varepsilon)$,
\begin{equation*}
\|\prox_{\xi f}(x) -\bar x\|=    \|\prox_{\xi f}(x) -\prox_{\xi f}(\bar x)\| \leq K\|x -\bar x\|  \leq \tilde \varepsilon, 
\end{equation*}
which together with \eqref{eq:ranprox} implies that $\prox_{\xi f} (x) \in B(\bar x; \tilde \varepsilon)\cap \dom \partial f$. Thus, \eqref{eq:KLF1P} holds for all $x \in B(\bar x; \varepsilon)$, $y = \prox_{\xi f}(x)$ with $F(\bar x, \bar x) < F(x,y) < F(\bar x, \bar x)+\nu$.
By the single-valued property and continuity of $\prox_{\xi f}$ on $B(\bar x; \varepsilon)$, for all $x \in B(\bar x; \varepsilon)$, $\prox_{\xi f}$ is inner semicontinuous at $(x,y_x)$ with $y_x = \prox_{\xi f}(x)$. The conclusion now follows by applying Lemma~\ref{l:margin}\ref{l:margin_KL} with $h =f_\xi$, $P =\prox_{\xi f}$, and $C =\mathcal{H}$. 

\ref{t:EnvKL_full}: Let $\bar x \in \dom \partial f_{\xi}$. If $0 \notin \partial f_{\xi}(\bar x)$, then by Lemma~\ref{l:noncrit}, $f_{\xi}$ satisfies the KL property at $\bar x$ with exponent $\theta$. Now, assume that $0 \in \partial f_{\xi}(\bar x)$, i.e., $\bar x$ is a critical point of $f_\xi$. Then $\prox_{\xi f}(\bar x) = \bar x$ and, by \eqref{eq:ranprox}, $\bar x \in \dom \partial f$. Since $f$ satisfies the KL property at $\bar{x}$ with exponent $\theta \in [\frac{1}{2},1)$, so does $f_\xi$ due to \ref{t:EnvKL_pointW}. The proof is complete.
\end{proof}

\begin{corollary}[Exponent for Moreau envelope of weakly convex KL functions]
\label{c:EnvKL}
Suppose that $f\colon \mathcal{H}\to (-\infty, +\infty]$ is a proper lower semicontinuous $\alpha$-weakly convex function and let $\xi \in (0, \frac{1}{\alpha})$. Then the following hold:
\begin{enumerate}
\item\label{t:EnvKL_pointW}
If $f$ satisfies the KL property at $\bar{x} \in \dom \partial f$ with exponent $\theta \in [\frac{1}{2},1)$, then $f_{\xi}$ satisfies the KL property at $\bar{x}$ with exponent $\theta$.
\item\label{t:EnvKL_fullW}
If $f$ is a KL function with exponent $\theta \in [\frac{1}{2},1)$, then $f_{\xi}$ is a KL function with exponent $\theta$.
\end{enumerate}
\end{corollary}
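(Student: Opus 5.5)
The plan is to derive Corollary~\ref{c:EnvKL} directly from Theorem~\ref{t:EnvKL}. The only thing to check is that, for a weakly convex $f$ and $\xi \in (0,\frac{1}{\alpha})$, the extra hypotheses of Theorem~\ref{t:EnvKL} hold automatically. These hypotheses are:
\begin{enumerate}
\item $\bar x \in \dom \partial f_\xi$;
\item at every critical point $\bar x$ of $f_\xi$, we have $\prox_{\xi f}(\bar x)=\bar x$ and $\prox_{\xi f}$ is single-valued and Lipschitz continuous near $\bar x$.
\end{enumerate}

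All of this comes from Lemma~\ref{l:weakcvx}. By Lemma~\ref{l:weakcvx}\ref{l:weakcvx_Lips}, $\prox_{\xi f}$ is single-valued and $\frac{1}{1-\xi\alpha}$-Lipschitz on all of $\mathcal{H}$, so the local single-valuedness and Lipschitz requirement holds everywhere. By Lemma~\ref{l:weakcvx}\ref{l:weakcvx_diff}, $f_\xi$ is continuously differentiable. Hence $\partial f_\xi(x)=\{\nabla f_\xi(x)\}$ for every $x$ (the limiting subdifferential of a $C^1$ function is its gradient), and so $\dom\partial f_\xi=\mathcal{H}$; in particular $\bar x\in\dom\partial f\cap\dom\partial f_\xi$. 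Finally, if $0\in\partial f_\xi(\bar x)$, then $\nabla f_\xi(\bar x)=\frac{1}{\xi}(\bar x-\prox_{\xi f}(\bar x))=0$, which gives $\prox_{\xi f}(\bar x)=\bar x$.

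For part \ref{t:EnvKL_pointW}, these facts verify every hypothesis of Theorem~\ref{t:EnvKL}\ref{t:EnvKL_point}, so $f_\xi$ has the KL property at $\bar x$ with exponent $\theta$. For part \ref{t:EnvKL_fullW}, the same facts hold at every critical point of $f_\xi$, and Theorem~\ref{t:EnvKL}\ref{t:EnvKL_full} applies. Alternatively, one can apply part \ref{t:EnvKL_pointW} pointwise. At a noncritical point of $f_\xi$, use Lemma~\ref{l:noncrit}, since $\dom\partial f_\xi=\mathcal{H}$. At a critical point $\bar x$, we have $\bar x=\prox_{\xi f}(\bar x)\in\dom\partial f$ by the inclusion \eqref{eq:ranprox}, so the KL hypothesis on $f$ is available there.

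The main point to get right is the identity $\partial f_\xi=\{\nabla f_\xi\}$, which rests on continuous differentiability and is standard. Everything else is bookkeeping of hypotheses, so I do not expect a genuine obstacle.
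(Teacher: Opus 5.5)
Your proposal is correct and follows the paper's proof. Both arguments use Lemma~\ref{l:weakcvx}\ref{l:weakcvx_Lips} and \ref{l:weakcvx_diff} to get that $\prox_{\xi f}$ is single-valued and Lipschitz on all of $\mathcal{H}$ and that $\nabla f_\xi = \frac{1}{\xi}(\Id - \prox_{\xi f})$, conclude that the critical points of $f_\xi$ are exactly the fixed points of $\prox_{\xi f}$, and then invoke Theorem~\ref{t:EnvKL}; your explicit check that $\partial f_\xi=\{\nabla f_\xi\}$, hence $\dom\partial f_\xi=\mathcal{H}$, fills in a hypothesis the paper leaves implicit.
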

\begin{proof}
We have from Lemma~\ref{l:weakcvx}\ref{l:weakcvx_Lips}\&\ref{l:weakcvx_diff} that $\prox_{\xi f}$ is single-valued and $\frac{1}{1 -\xi \alpha}$-Lipschitz continuous on $\mathcal{H}$ and that $f_\xi$ is differentiable on $\mathcal{H}$ with $\nabla f_{\xi} = \frac{1}{\xi}(\Id -\prox_{\xi f})$. The latter implies that $\bar x$ is a critical point of $f_\xi$ if and only if $\prox_{\xi f}(\bar x) = \bar x$. The conclusion now follows from Theorem~\ref{t:EnvKL}.
\end{proof}

\begin{remark}
It is shown in \cite[Theorem~3.4]{LP18} that if $f\colon \mathbb{R}^d \to (-\infty,+\infty]$ is a proper lower semicontinuous convex function that is continuous on $\dom \partial f$ and $f$ is a KL function with exponent $\theta \in (0, 2/3)$, then for all $\xi \in \mathbb{R}_{++}$, $f_{\xi}$ is a KL function with an exponent of $\max\{\frac{1}{2}, \frac{\theta}{2 - 2 \theta}\}$. In Corollary~\ref{c:EnvKL}, we extend this result in the case where $\theta \geq \frac{1}{2}$ to weakly convex functions $f$ in Hilbert spaces and obtain that $f_{\xi}$ is a KL function with exponent $\theta \leq \max\{\frac{1}{2}, \frac{\theta}{2 - 2 \theta}\}$ without requiring the continuity of $f$ on $\dom \partial f$.
\end{remark}

\section{Numerical simulation}
\label{s:numerical simulation}

In this section, we illustrate the efficiency of our algorithm. We first use the proposed algorithm and some existing algorithms in the literature to address the multi-class logistic regression problem. For AGDNL in all the experiments, we set $c=10^{-8}$, $N=2$, $\bar\beta =\bar\gamma =1$, $\lambda_{\min} = \frac{0.4}{L_{\max}}$, and $\kappa_{\min} = \frac{0.9}{\mu L_{\max}}$, $1/L_{\min} = L_{max} = 10^5$, $\beta = \gamma =0.9$, $\alpha = 0.01$, $\lambda_{-1} = \frac{0.4}{L_{-1}}$, $\kappa_{-1} = \frac{0.9}{L_{-1}}$, where $L_{-1} \in [L_{\min}, L_{\max}]$, and $\ell_1 = \ell_2 =\ell_3 =0.9$. 

\subsection{Multi-class logistic regression problem}

Let $\{(x_i, y_i): i = 1, \dots, n\}$ be a training set with explanatory variables $x_i \in \R^d$ and their corresponding labels $y_i \in \{1, \dots, K\}$, where $d$ is the number of features and $K$ is the number of classes. Let $W$ be the $d \times K$ matrix whose columns are denoted by $W_1, \dots, W_K$ and let $b = (b_1, \dots, b_K) \in \R^K$. 
The conditional probability of the training observations $x_i$ belonging to its correct classes $y_i$ is defined by
$p(Y = y_i|X = x_i) =\frac{\exp(b_{y_i} + W^T_{y_i }x_i)}{\sum_{j=1}^{K}\exp(b_j + W^T_j x_i)}.$ The problem is to find $(W, b)$ such that the sum of these conditional probabilities is maximized. This can be done by minimizing the negative log-likelihood function $L(W, b) = -\frac{1}{n}\sum_{i=1}^{n}\log p(Y = y_i|X = x_i)=-\frac{1}{n}\sum_{i=1}^{n}\log \left(\frac{\exp(b_{y_i} + W^T_{y_i }x_i)}{\sum_{j=1}^{K}\exp(b_j + W^T_j x_i)} \right).$
To prevent overfitting issue, we add a regularization term of squared magnitude of $W$ to the loss function. Then the regularized multi-class logistic regression problem is formulated as $\min_{W \in \mathbb{R}^{d\times K},\, b \in \mathbb{R}^K} F(W,b),$ where $F(W,b) = L(W,b) +\lambda \|W\|^2$. We note that $F$ is a differentiable function with Lipschitz continuous gradient. Moreover, $F$ is a KL function as it is definable in the log-exp structure (see \cite[Section~4.3]{ABRS10}).

\begin{figure}[!ht]
\centering
\subfloat[DNA dataset]{\label{fig:dna}\includegraphics[width=4cm]{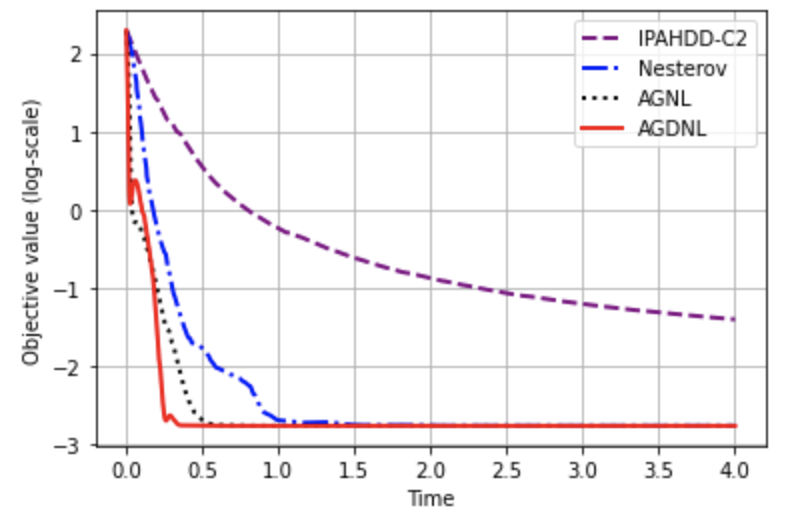}}
\subfloat[Satimage dataset]{\label{fig:satimage}\includegraphics[width=4cm]{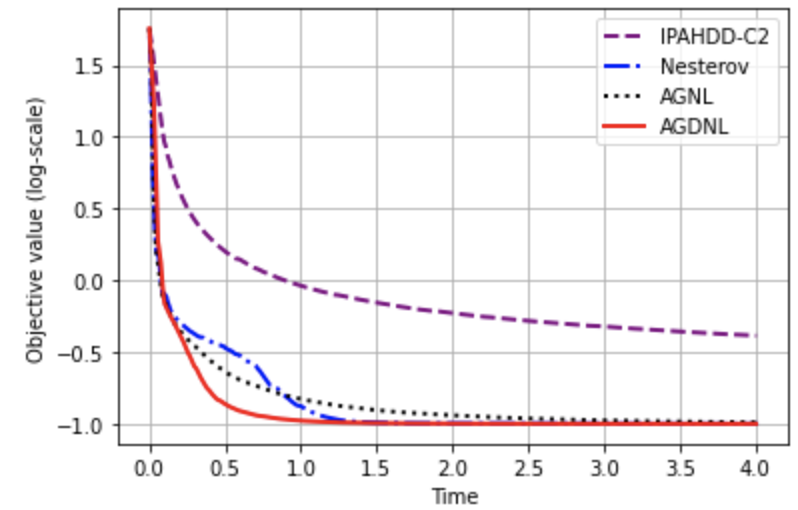}}
\subfloat[Pendigits dataset]{\label{fig:pendigits}\includegraphics[width=4cm]{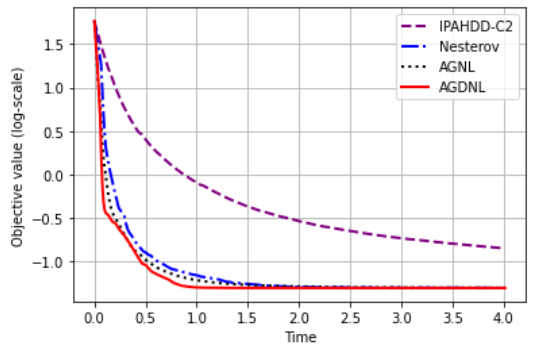}}

\subfloat[USPS dataset]{\label{fig:usps}\includegraphics[width=4cm]{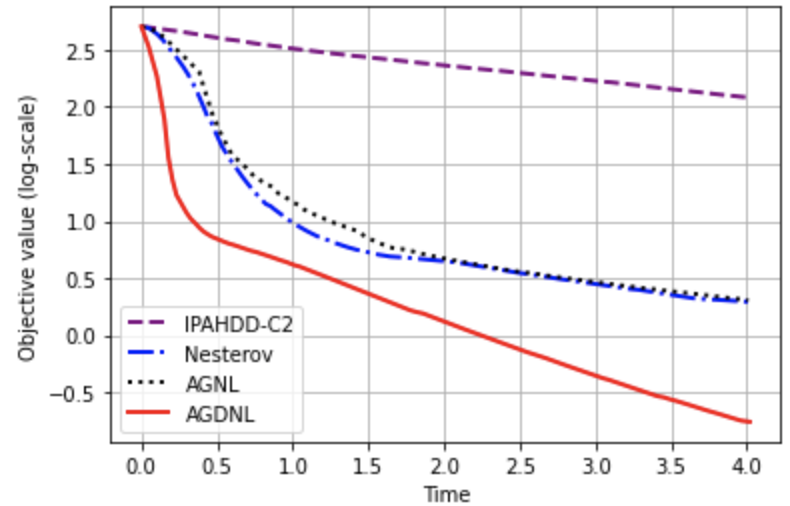}}
\subfloat[Letter dataset]{\label{fig:letter}\includegraphics[width=4cm]{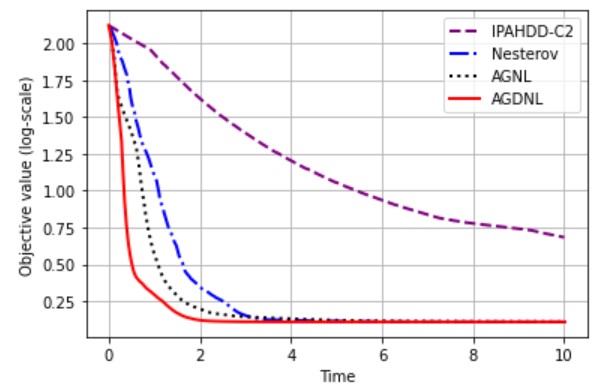}}
\subfloat[Minist dataset]{\label{fig:minist}\includegraphics[width=4cm]{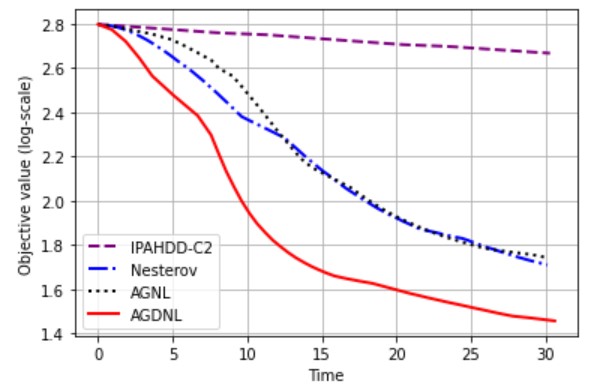}}
\caption{Results for the logistic regression problem.}
\label{fig:LR}
\end{figure}

We use six real-world datasets to compare AGDNL algorithm (Algorithm~\ref{algo:AGDNL}), AGNL algorithm (given in \eqref{AGNL}--\eqref{AGNL_ls}), Nesterov's accelerated gradient algorithm \cite{Nes83}, and IPAHDD-C2 algorithm which is the most efficient method in \cite{AAL21}. All datasets are obtained from the popular library for Support Vector Machines (LibSVM). We now recall IPAHDD-C2 algorithm
\begin{equation*}
\small
\begin{cases} 
z_n = x_n + \frac{1}{1 +\gamma h} (x_n -x_{n-1})\\ 
w_n \in \prox_{\frac{h}{1 +h\gamma}\phi}
 \left( \tfrac{1}{h(1+\gamma h)} (x_n -x_{n-1}) +\tfrac{\beta}{1+\gamma h} \nabla f(x_{n-1}) + \tfrac{\gamma \beta h}{1+\gamma h} \nabla f(x_n) - \frac{h}{1+\gamma h}\nabla f(z_n)\right)\\
x_{n+1}\!\!\!  = x_n -\beta h \nabla f(x_n) + hw_n.
\end{cases}
\end{equation*}
Set $\sigma = \frac{1}{1+\gamma h}$, $\lambda = \beta h$, $\alpha = \frac{1}{h(1+ \gamma h)}$, $\mu = h$, $\tau_n = \frac{h}{1+\gamma h}$, $\nu = \frac{h\beta \gamma}{1+\gamma h}$, and $\kappa = \frac{h}{1+\gamma h}$. Then IPAHDD-C2 algorithm becomes
\begin{equation*}
\begin{cases}
z_n &= x_n + \sigma (x_n - x_{n-1}),\\
w_n &\in \prox_{\tau_n \phi}\left(\alpha(x_n -x_{n-1}+\lambda\nabla f(x_{n-1} ))+\nu \nabla f(x_n) -\kappa \nabla f(z_n)\right ),\\
x_{n+1}\!\!\! &= x_n- \lambda\nabla f(x_n) +\mu w_n.
\end{cases}
\end{equation*}
According to Remark 2.2(iii), for $\phi =r\|\cdot\|$, we have $\prox_{\tau \phi}(x) = 0$ if and only if $\|x\| \leq r \tau$. Thus, when $\tau_n$ is set to be small, by the $w$-update, $w_n$ may not be zero when $\|\nabla f(u_n)\|$ and $\|x_n - x_{n-1}\|$ remain large, and so the dry-like friction function still impacts until $\|\nabla f(u_n)\|$ and $\|x_n - x_{n-1}\|$ are small enough. Therefore, for all algorithms, we choose $\Phi(x) =\|x\|$, $\theta_n = 10^{-8}$,  $r=1$, $\tau_n = 2 \times 10^{-8}$, $e_n =0$, and $\delta= 10^{-8}$.  Moreover, according to \eqref{eq:linesearch}, the smaller the value of $c$, the faster condition~\eqref{eq:linesearch} is satisfied, which consequently enhances the performance of AGDNL. Thus, in all experiments, $c$ is intentionally set to be small. Here, we set $c= 10^{-8}$. For IPAHDD-C2 algorithm, we choose $\lambda = \frac{0.4}{L}$, $\nu =\frac{0.5}{\mu L}$, $\kappa = \frac{1}{\mu L}$, and $\sigma =0.03$. We set the same maximum running time for all algorithms.

\begin{table}[!ht]
\centering
\caption{Numerical results of 4 comparative algorithms on real datasets}
\label{tab:results}
\begin{tabular}{l c c c l c c}
\hline
\multicolumn{1}{c}{\multirow{2}{*}{Data}} & Training & Testing & \multicolumn{1}{c}{\multirow{2}{*}{Class}} & \multicolumn{1}{c}{\multirow{2}{*}{Algorithm}} & Training & Testing \\
& data & data & & & accuracy & accuracy\\ \hline
\multirow{4}{*}{DNA} & \multirow{4}{*}{2000} & \multirow{4}{*}{1186} & \multirow{4}{*}{3} & IPAHDD-C2 & $97.75\% $ & $90.80 \%$\\
&  &  &   & Nesterov& $\bf 99.85\% $ & $\bf 94.01 \%$\\	
&  &  &   & AGNL& $\bf 99.85\% $ & $\bf 94.01 \%$\\	
&  &  &   &AGDNL&  $\bf 99.85\% $ & $\bf 94.01 \%$\\	 \hline
\multirow{4}{*}{Satimage} & \multirow{4}{*}{3104} & \multirow{4}{*}{1331} & \multirow{4}{*}{6} & IPAHDD-C2& $80.96\% $ & $80.00 \%$\\
&  &  &   & Nesterov &  $ 86.67\% $ & $\bf 83.6 \%$\\	
&  &  &   & AGNL& $86.50\% $ & $83.50 \%$\\	
&  &  &   & AGDNL& $\bf 86.78\% $ & $\bf 83.6 \%$\\	 \hline
\multirow{4}{*}{Pendigits} & \multirow{4}{*}{7494} & \multirow{4}{*}{3498} &\multirow{4}{*}{10} & IPAHDD-C2& $89.63\% $ & $84.07 \%$\\
&  &  &   & Nesterov&  $ 95.10\% $ & $ 89.87 \%$\\	
&  &  &   & AGNL& $ 94.99\% $ & $89.85 \%$\\	
&  &  &   & AGDNL& $\bf 95.20\% $ & $\bf 90.05 \%$\\	 \hline
\multirow{4}{*}{USPS} & \multirow{4}{*}{7291} & \multirow{4}{*}{2078} & \multirow{4}{*}{10} & IPAHDD-C2 &  $35.08\% $ & $32.93 \%$\\
&  &  &   & Nesterov&  $93.89\% $ & $88.24 \%$\\	
&  &  &   & AGNL& $94.32\% $ & $ 88.39 \%$\\	
&  &  &   & AGDNL&  $\bf 96.95\% $ & $\bf 90.23\%$	\\ \hline
\multirow{4}{*}{Letter} & \multirow{4}{*}{15000} & \multirow{4}{*}{5000} & \multirow{4}{*}{26}& IPAHDD-C2 &  $50.60\% $ & $49.36\%$\\
&  &  &   & Nesterov&  $76.30\% $ & $\bf 76.29\%$\\	
&  &  &   & AGNL& $76.44\% $ & $ 76.22 \%$\\	
&  &  &   & AGDNL&  $\bf 76.48\% $ & $ 76.27\%$	\\ \hline
\multirow{4}{*}{Minist} & \multirow{4}{*}{60000} & \multirow{4}{*}{10000} & \multirow{4}{*}{10}& IPAHDD-C2 &  $20.80\% $ & $20.78 \%$\\
&  &  &   & Nesterov&  $79.02\% $ & $79.0 \%$\\	
&  &  &   & AGNL& $77.96\% $ & $ 77.95 \%$\\	
&  &  &   & AGDNL&  $\bf 86.80\% $ & $\bf 86.80\%$	\\ \hline	\end{tabular}
\end{table}
Figure~\ref{fig:LR} shows that AGDNL takes less time to return a lower objective value for all datasets. We also observe from Table~\ref{tab:results} that after the same time, AGDNL algorithm and Nesterov's algorithm give better training accuracy and testing accuracy than IPAHDD-C2 on all datasets. Moreover, AGDNL algorithm is the most efficient method out of the four algorithms.

Still considering the multi-class logistic regression problem, we use three datasets (letter, seismic, usps) to compare two variants of our algorithm: AGDNL-M (monotone line search with $N=0$) and AGDNL (nonmonotone line search with $N=2$) with GIST algorithm in \cite{GZLHY13} which also uses nonmonotone line search and cGIPGM algorithm in \cite{WL19}. The parameters for GIST algorithm are chosen as in the \cite[Section 4.1]{GZLHY13} and for cGIPGM, we set $\beta := 0.98L/(L +\ell), \alpha := \alpha_{*}(\beta)$ and $\lambda := \min \{\alpha/\beta, (1.99 - 2\alpha)/(1 - \beta)\}/L$ as that in \cite{WL19}. Figure~\ref{fig:LR2} shows that AGDNL outperforms other algorithms. We also note that when $N=0$, the potential function $V$ in the criterion of our proposed algorithm is nonincreasing while in the case $N>0$, $V$ can be increasing at some points. However, $V$ is different from the objective function $F$, so in both cases, the objective function $F$ can also be increasing at some points but finally converges by Theorem~\ref{t:fullseq}, see Figure~\ref{fig:seismic_MN}. It follows from Figure~\ref{fig:LR2} that AGDNL has a faster convergence speed than AGDNL-M in most cases. Therefore, the nonmonotone line search criterion can further accelerate the convergence speed.  
\begin{figure}[!ht]
\centering
\subfloat[Letter dataset]{\label{fig:letter_MN}\includegraphics[width=4cm]{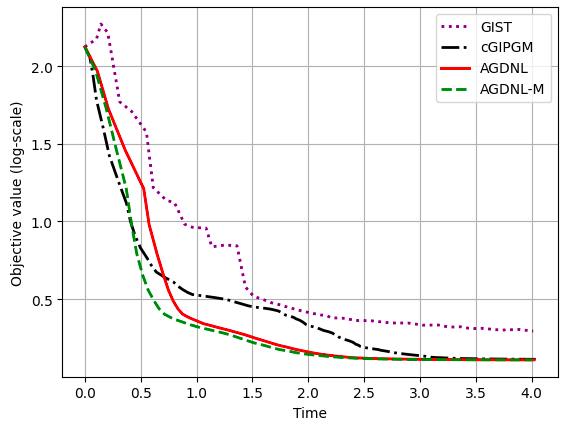}}
\subfloat[Seismic dataset]{\label{fig:seismic_MN}\includegraphics[width=4.2cm]{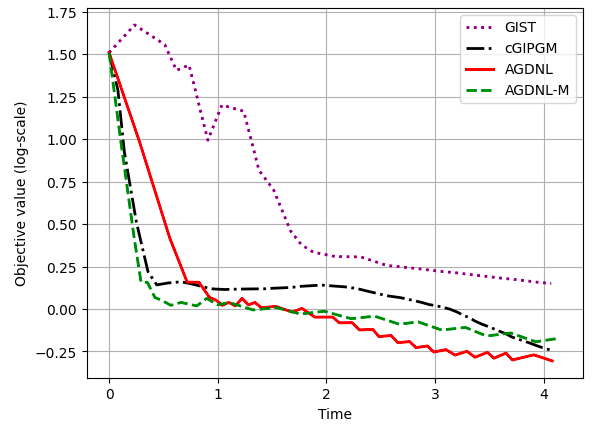}}
\subfloat[USPS dataset]{\label{fig:usps_MN}\includegraphics[width=4cm]{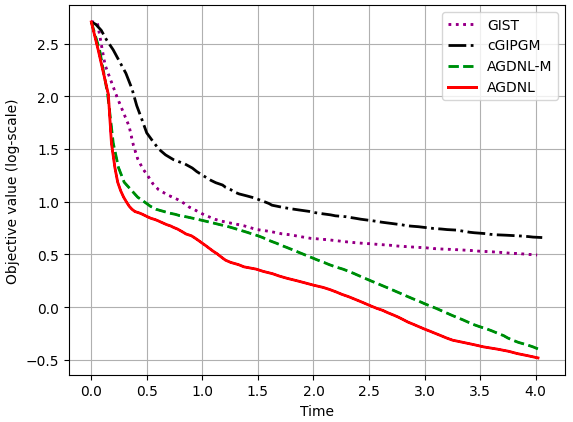}}
\caption{Logistic regression problem.}
\label{fig:LR2}
\end{figure}

We now present the performance of AGDNL with various choices of $\tau_n$. For all algorithms, we set $\tau_n = |\theta_n| + \delta$, where $\delta = 10^{-8}$. Additionally, we specify different values for $\theta_n$ in the following manner: $\theta_n = 10^{-8}$ for AGDNL-1, $\theta_n = 10^{-6}$ for AGDNL-2, $\theta_n = 10^{-4}$ for AGDNL-3, $\theta_n = 1/L_n -  10^{-8}$ for AGDNL-4, and $\theta_n = 2/L_n-10^{-8}$ for AGDNL-5. It is noteworthy that the parameter choice $\tau_n = 1/L_n $ noticeably enhances the performance of AGDNL, as illustrated in Figure~\ref{fig:LR3}.

\begin{figure}[!ht]
\centering
\subfloat[Letter dataset]{\label{fig:letter_tau}\includegraphics[width=4cm]{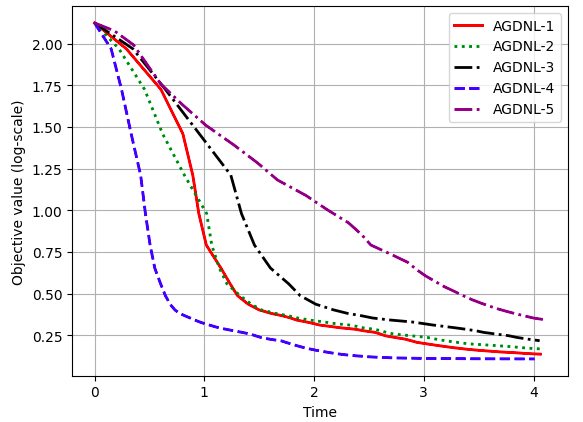}}
\subfloat[Pendigits dataset]{\label{fig:pendigits_tau}\includegraphics[width=4cm]{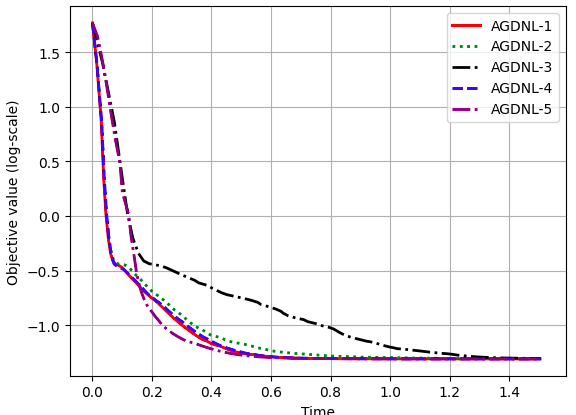}}
\subfloat[USPS dataset]{\label{fig:usps_tau}\includegraphics[width=4cm]{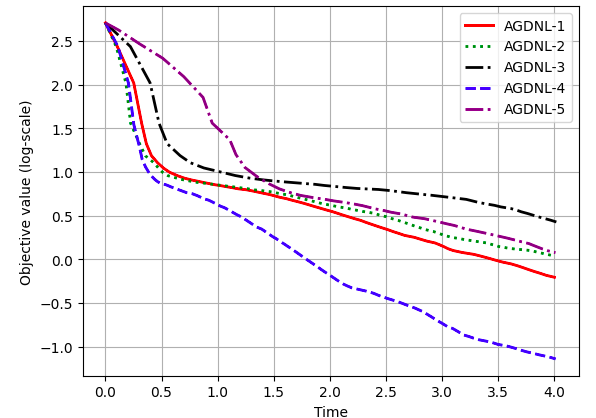}}
\caption{Logistic regression problem.}
\label{fig:LR3}
\end{figure}

\subsection{Matrix factorization in recommendation systems}

Recommendation systems have become increasingly popular and widely used in various industries and applications, including e-commerce, entertainment, and social media. In the context of video and music services such as Netflix, YouTube, and Spotify, recommendation systems analyze the user's watching or listening history and preferences to suggest new content that they are likely to enjoy. Similarly, in e-commerce platforms such as Amazon, these systems use past purchase history and user behaviour to suggest products that the user may be interested in buying. In these systems, matrix factorization is a powerful technique. 
 
Let $A$ be a feedback matrix in $\R^{m \times d}$, where $m$ is the number of users and $d$ is the number of items. We want to find a user matrix $U \in R^{m \times p}$, where row $i$ is the embedding for user $i$ and an item matrix $V \in \R^{d \times p}$, where row $j$ is the embedding for item $j$ such that the product $UV^T$
 is a good approximation of $A$. Therefore, we want to solve 
\begin{equation*}
\min_{X=[U,V]}f(X) = f(U, V ) = \frac{1}{2}\|A -UV^T\|^2_{F}   
\end{equation*}
for $U \in \R^{m \times p}$ and $V \in \R^{d \times p}$. This is a nonconvex problem. We used Movilens $100$K dataset which has $100,000$ ratings from $843$ users on $1682$ movies. Here, we choose several values of $p = 8, 10, 15, 20$. All algorithms have the same starting point that is chosen randomly. Figure \ref{fig:MF} shows the performance of five algorithms, which are IPAHDD-C2 algorithm, Nesterov's accelerated gradient method, AGDNL algorithm, IPAHDD-N-var algorithm \cite{AA20} and AGDNL-A algorithm which is the proposed algorithm where the limit point is an approached critical point. For all algorithms, the maximum running time is set as $T_{\max} =150 s$ in Figure \ref{Matrix_r8}, $T_{\max} =400 s$ in Figure \ref{Matrix_r10}, and $T_{\max} =1300 s$ in Figure \ref{Matrix_r15} and Figure \ref{Matrix_r20}. We see that AGDNL algorithm has a better performance than the other algorithms. It is also known that the classical framework of Nesterov’s accelerated gradient method is for convex functions possessing a Lipschitz continuous gradient.
Our proposed method does not require the convexity of the objective function. This example allows us to
investigate this potential advantage of our method.

\begin{figure}[!ht]
\centering
\subfloat[$p=8$]{\label{Matrix_r8}\includegraphics[width=4.5cm]{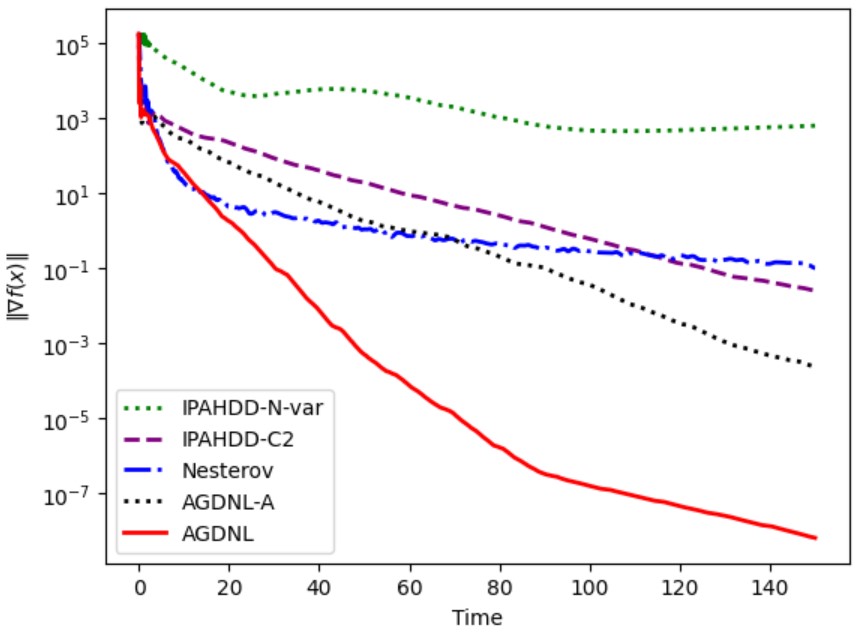}}
\subfloat[$p=10$]{\label{Matrix_r10}\includegraphics[width=4.5cm]{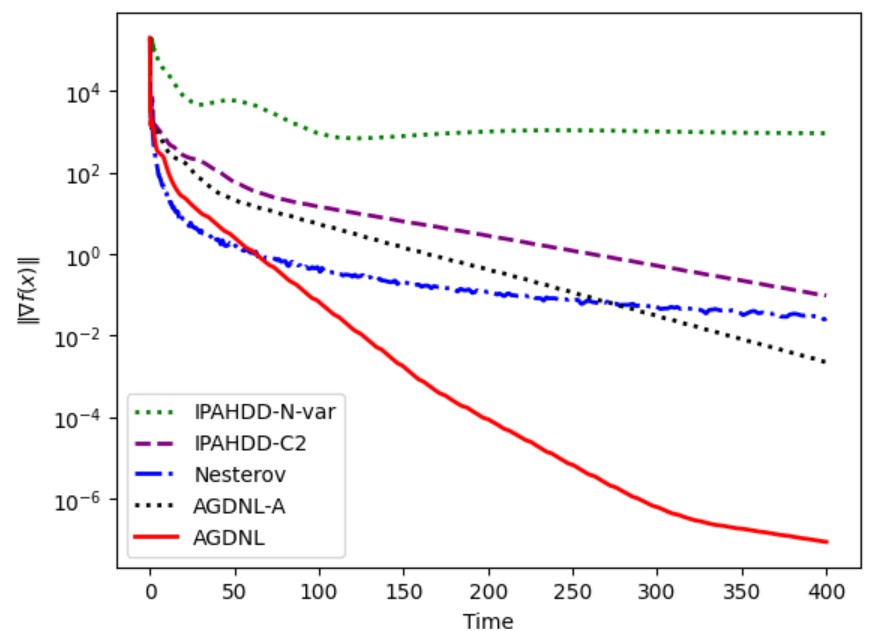}}

\subfloat[$p=15$]{\label{Matrix_r15}\includegraphics[width=4.5cm]{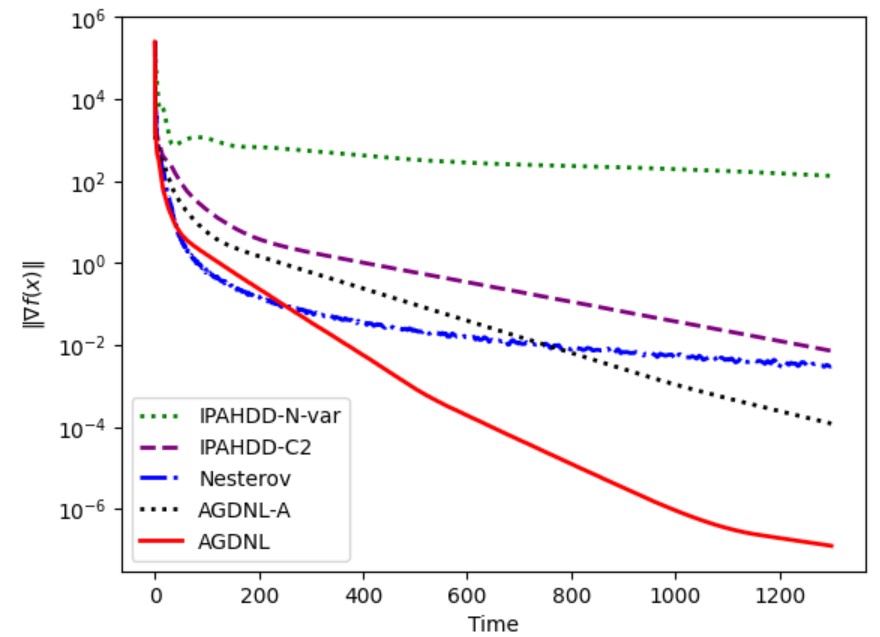}}
\subfloat[$p=20$]{\label{Matrix_r20}\includegraphics[width=4.5cm]{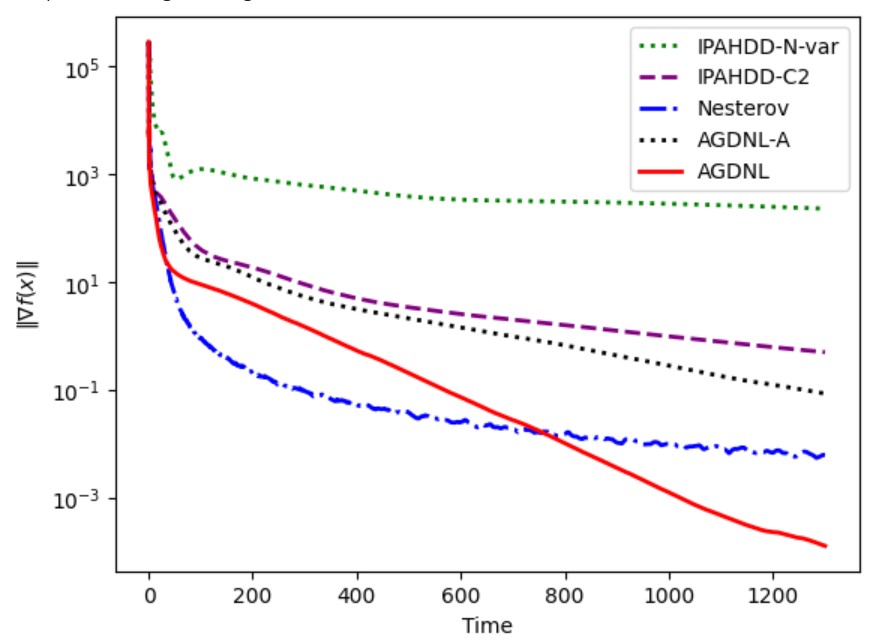}}
\caption{Results for matrix factorization.}
\label{fig:MF}
\end{figure}

\subsection{Ridge regression problem}

Consider the ridge regression problem
\begin{equation*} 
\min_{x \in \R^{d}} f(x):=\tfrac{1}{2}\|Ax-b\|^2 + \lambda\|x\|^2,
\end{equation*}
where $A \in \R^{m \times d}$ is a random sparse matrix and $b \in \R^{m}$ is randomly chosen. Here, in Figure \ref{fig:R}, $A \in \R^{1200 \times 5000}$, $b \in \R^{1200}$, and $\lambda =0.01$, while in Figure \ref{fig:RE}, $A \in \R^{1000 \times 10000}$, $b \in \R^{1000}$. AGDNL-N is AGDNL algorithm using nonconvex dry-like friction $\phi(x) = \max\{\frac{\alpha}{2}\|x\|^2 +r\|x\|, -\frac{\alpha}{2}\|x\|^2+2r\|x\|\}$ whose proximal operator is given in Remark~\ref{r:DLF}\ref{r:DLF_weaklycvxfun}. We choose $\alpha =0.01$ and $r=1$. Meanwhile, AGDNL-E is AGDNL algorithm using the error term $e_i$. In Figure~\ref{fig:R} and Figure~\ref{fig:RE}, we set $e_n = \frac{1}{n L} \frac{\xi}{\|\xi\|}$, where $\xi$ is a random vector in $\R^d$ with the uniform distribution on $(0, 1)^d$, $\tau_n =10^{-8}$, while in Figure~\ref{fig:E}, $e_n = \left(\frac{1}{L_n}-10^{-8}\right)\frac{\nabla f(x_n}{\|\nabla f(x_n)\|}$ and $\tau_n =\frac{1}{L_n}$.  All algorithms have the same initial point. In general, AGDNL algorithm has similar performance to AGDNL-E algorithm and AGDNL-N algorithm. However, there are instances, as illustrated in Figure \ref{fig:RR}, where the AGDNL-N and AGDNL-E demonstrate slightly superior performance compared to AGDNL and Nesterov's accelerated algorithm. Moreover, it also shows the robustness of our proposed algorithm.

\begin{figure}[!ht]
\centering
\subfloat[$A \in \R^{1200 \times 5000}$]{\label{fig:R}\includegraphics[width=3.5cm]{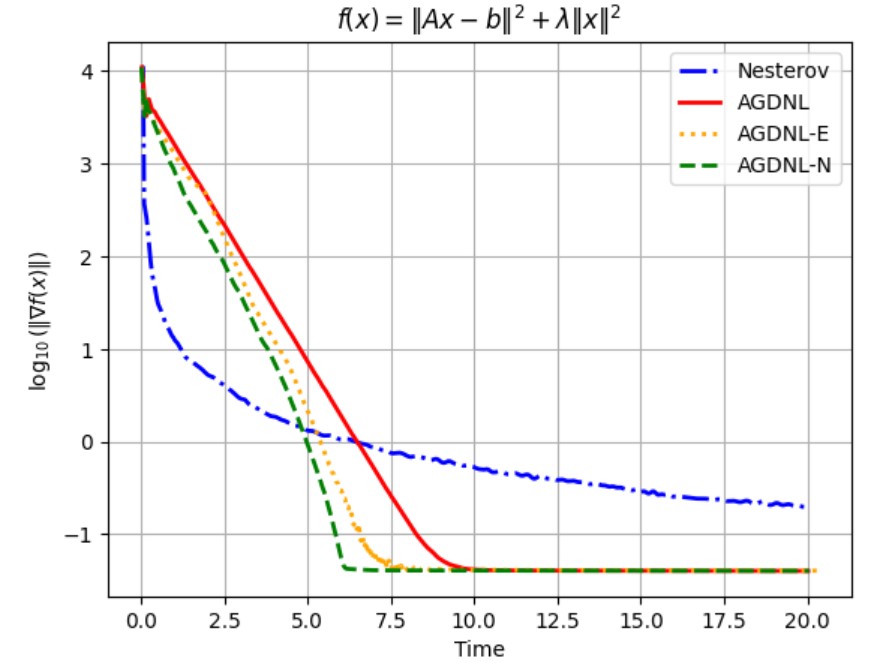}}
\subfloat[$A \in \R^{1000 \times 10000}$]{\label{fig:RE}\includegraphics[width=4cm]{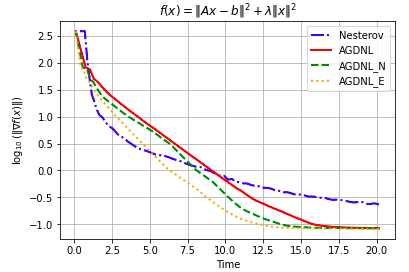}}
\subfloat[$A \in \R^{2000 \times 7000}$]{\label{fig:E}\includegraphics[width=3.5cm]{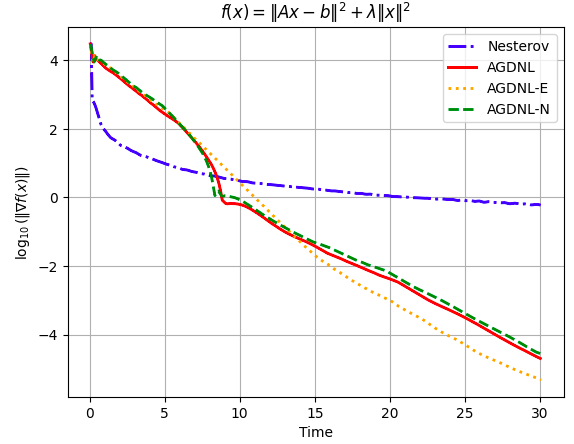}}
\caption{Ridge regression problem}
\label{fig:RR}
\end{figure}

\subsection{Quadratic problem with Dolan--Mor\'e performance profile}

We consider the quadratic problem $\min_{x \in \R^d}f(x) =\|Ax-b\|^2$, where $A \in \R^{m \times d}$ ($m \leq d$) and $b \in \R^m$ are chosen randomly. To compare the algorithms, we use the performance profiles developed by Dolan and Mor\'e \cite{DM02}. Let $P$ be a set of $50$ different problems with $50$ sparse matrices $A$ which are generated randomly in Python with size ranging from $m = 24$ to $m = 1300$ and from $d = 1300$ to $d = 20000$. Let $S$ be the set of the four solvers that are IPAHDD-C2,  Nesterov's accelerated gradient algorithm, AGDNL and AGDNL-N. For AGDNL algorithm, we use convex dry friction $\phi(x) = r\|x\|$ (see Remark~\ref{r:DLF}\ref{r:DLF_cvx}), while for AGDNL-N, we use nonconvex dry-like friction $\phi(x) = \max\{\frac{\alpha}{2}\|x\|^2 +r\|x\|, -\frac{\alpha}{2}\|x\|^2+2r\|x\|\}$ (see Remark~\ref{r:DLF}\ref{r:DLF_weaklycvxfun}). For each $p \in P$ and $s \in S$, the performance ratio is defined by
 \begin{equation*}
r_{p,s} = \frac{t_{p,s}}{\min\{t_{p,s} : s\in S\}},
 \end{equation*}
 where $t_{p,s}$ is the computing time for solver $s$ to solve problem $p$. The performance of the solver $s \in S$ is given by $\rho_s(t) = \frac{1}{n_p} \text{size}\{p \in P : r_{p,s} \leq  t\}$,
where $n_p$ is the number of problems. Thus, $\rho_s(t)$ is the probability for solver $s \in S$ that a performance ratio $r_{p,s}$ is within a factor $t \in \R$ of the best possible ratio. For all algorithms, we choose the same initial points and the same stopping criterion, i.e.,
either the computing time exceeds 1000s or $\|\nabla f(x)\| \leq 0.1$.
It can be observed from Figure \ref{fig:Q} that AGDNL and AGDNL-N are the most efficient algorithms, followed by Nesterov's accelerated gradient method.
\begin{figure}
\centering
\includegraphics[width=4.5cm]{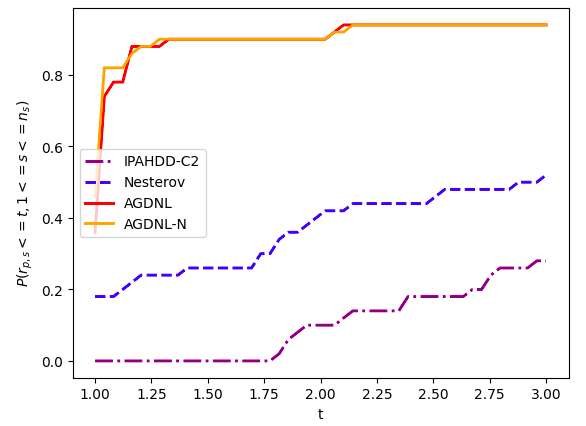} 
\caption{Quadratic problems}
\label{fig:Q}
\end{figure}

\section{Conclusion}
\label{s:conclusion}

We have proposed a fast gradient algorithm with dry-like friction and nonmonotone line search (AGDNL), which is more general and more efficient than inertial gradient algorithms with Hessian damping and dry friction in the literature. The iterative sequence generated by AGDNL algorithm is bounded (with all strong cluster points being critical points) or strongly convergent to an ``approximate'' critical point of the objective function. The convergence of the whole sequence to a critical point is also guaranteed in both smooth and nonsmooth settings under KL assumptions. We have derived the KL exponent for the Moreau envelope of KL functions in Hilbert spaces that are not necessarily convex nor continuous. The efficiency of the proposed algorithm has been illustrated by numerical simulations.

\appendix
\section{Proof of a closed form}
\label{s:appendix}

\begin{proof}[Proof of the closed form in Remark~\ref{r:DLF}\ref{r:DLF_weaklycvxfun}]
By Cauchy--Schwarz inequality, for all $x, y\in \mathcal{H}$ and all $\tau \in \mathbb{R}_{++}$,
\begin{align} 
\phi(y) + \frac{1}{2 \tau}\|x-y\|^2 &\geq \phi(y) + \frac{1}{2 \tau}\|x\|^2 - \frac{1}{\tau}\|x\|\|y\| + \frac{1}{2 \tau} \|y\|^2 \notag\\
&=\begin{cases}
\varphi_1(\|y\|) &\text{if~} \|y\| >\frac{r}{\alpha},\\
\varphi_2(\|y\|) &\text{if~} \|y\| \leq \frac{r}{\alpha},
\end{cases}
\label{eq:M}
\end{align}
where $\varphi_1(t):= \frac{1}{2}(\alpha + \frac{1}{\tau})t^2 + (r -\frac{1}{\tau}\|x\|)t+ \frac{1}{2\tau}\|x\|^2$ and $\varphi_2(t): = \frac{1}{2}(-\alpha + \frac{1}{\tau})t^2 + (2r -\frac{1}{\tau}\|x\|)t+ \frac{1}{2\tau}\|x\|^2$.  Let $\tau \in (0,\frac{1}{\alpha})$. Then $\alpha + \frac{1}{\tau} > -\alpha + \frac{1}{\tau} >0$. It can be seen that 
\begin{equation}\label{e:Phi1}
\text{$\varphi_1$ is decreasing on $\left(-\infty, \frac{\|x\| -r\tau}{ 1 +\alpha \tau}\right]$ and increasing on $\left[\frac{\|x\| -r\tau}{ 1 +\alpha \tau}, +\infty\right)$} 
\end{equation}
and that
\begin{equation}\label{e:Phi2}
\text{$\varphi_2$ is decreasing on $\left(-\infty, \frac{\|x\| -2r\tau}{ 1 -\alpha \tau}\right]$ and increasing on $\left[\frac{\|x\| -2r\tau}{ 1 -\alpha \tau}, +\infty\right)$}.    
\end{equation}
We now distinguish the following cases.

\emph{Case 1:} $\|x\| \leq 2r\tau$. In view of \eqref{e:Phi2}, $\min_{\|y\| \leq \frac{r}{\alpha}} \varphi_2(\|y\|) =\varphi_2(0) =\frac{1}{2 \tau}\|x\|^2$. Since $\alpha < \frac{1}{\tau}$ and $\frac{1}{\tau}\|x\| \leq 2r$, we have that, for all $\|y\| \geq \frac{r}{\alpha}$,
\begin{equation*}
\varphi_1(\|y\|) > \alpha \|y\|^2 - r\|y\| + \frac{1}{2\tau}\|x\|^2 = \|y\| (\alpha \|y\| -r) + \frac{1}{2\tau}\|x\|^2 \geq \frac{1}{2\tau}\|x\|^2.
\end{equation*}
By combining with \eqref{eq:M}, we derive that, for all $y\in \mathcal{H}$, $\phi(y) + \frac{1}{2 \tau}\|x-y\|^2 \geq \frac{1}{2\tau}\|x\|^2$, where the equality holds if and only if $y =0$. This means $\prox_{\tau \phi}(x) =0$.

\emph{Case 2:} $2 r \tau < \|x\| \leq r \tau +\frac{r}{\alpha}$. Then $\frac{\|x\| -r\tau}{ 1 +\alpha \tau} \leq \frac{r}{\alpha (1 +\alpha \tau)} <\frac{r}{\alpha}$ and  $\frac{\|x\| - 2 r\tau}{1 -\alpha \tau} \leq \frac{\frac{r}{\alpha} - r\tau}{ 1 -\alpha \tau} =\frac{r}{\alpha}$. It follows from \eqref{e:Phi1} and \eqref{e:Phi2} that $\varphi_1$ is increasing on $[\frac{r}{\alpha}, +\infty)$ and $\min_{\|y\| \leq \frac{r}{\alpha}} \varphi_2(\|y\|) =\varphi_2\left(\frac{\|x\| -2 r\tau}{1 -\alpha \tau}\right)$. Therefore, $\min_{\|y\| >\frac{r}{\alpha}} \varphi_1(\|y\|) > \varphi_1\left(\frac{r}{\alpha}\right) = \varphi_2\left(\frac{r}{\alpha}\right) \geq \varphi_2\left(\frac{\|x\| -2 r\tau}{1 -\alpha \tau}\right),$ and so, for all $y\in \mathcal{H}$, $\phi(y) + \frac{1}{2\tau}\|x-y\|^2 \geq \varphi_2\left(\frac{\|x\|-2r\tau}{1 -\alpha \tau}\right),$ where the equality holds if and only if $\langle x, y\rangle =\|x\|\|y\|$ and $\|y\| =\frac{\|x\|-2r\tau}{1 -\alpha \tau}$. As a result, $\prox_{\tau \phi}(x) =\frac{\|x\| -2 r\tau}{1 -\alpha \tau} \frac{x}{\|x\|}$.

\emph{Case 3:} $r\tau + \frac{r}{\alpha} < \|x\| \leq 2 r\tau + \frac{r}{\alpha}$. Then $\frac{\|x\| -r\tau}{ 1 +\alpha \tau} \leq \frac{r \tau +\frac{r}{\alpha}}{1 +\alpha \tau} =\frac{r}{\alpha}$ and $\frac{\|x\| - 2 r\tau}{1 -\alpha \tau} \geq \frac{\frac{r}{\alpha} - r\tau}{ 1 -\alpha \tau} =\frac{r}{\alpha}$. We derive from \eqref{e:Phi1} and \eqref{e:Phi2} that $\min_{\|y\| >\frac{r}{\alpha}} \varphi_1(\|y\|) > \varphi_1\left(\frac{r}{\alpha}\right) = \varphi_2\left(\frac{r}{\alpha}\right) \text{~~and~~}
\min_{\|y\| \leq \frac{r}{\alpha}}\varphi_2(\|y\|) = \varphi_2\left(\frac{r}{\alpha}\right).$ This together with \eqref{eq:M} implies that, for all $y\in \mathcal{H}$, $\phi(y) + \frac{1}{2\tau}\|x-y\|^2) \geq \varphi_2\left(\frac{r}{\alpha}\right)$, where the equality holds if and only if $\langle x, y\rangle =\|x\|\|y\|$ and $\| y\| = \frac{r}{\alpha}$. Therefore, $\prox_{\tau \phi}(x) = \frac{r}{\alpha} \frac{x}{\|x\|}$.

\emph{Case 4:} $\|x\| > 2 r\tau +\frac{r}{\alpha}$. Then $\frac{\|x\| -r\tau}{1 + \alpha \tau} > \frac{r}{\alpha}$ and $\frac{\|x\| - 2 r\tau}{1 -\alpha \tau} > \frac{r}{\alpha}$. In view of \eqref{e:Phi1} and \eqref{e:Phi2}, $\min_{\|y\| > \frac{r}{\alpha}} \varphi_1(\|y\|) = \varphi_1\left(\frac{\|x\| -r\tau}{1 + \alpha \tau}\right)$ and $\min_{\|y\| \leq \frac{r}{\alpha}} \varphi_2(\|y\|) = \varphi_2\left(\frac{r}{\alpha}\right) = \varphi_1\left(\frac{r}{\alpha}\right) \geq \varphi_1\left(\frac{\|x\| -r\tau}{1 + \alpha \tau}\right),$ which implies that $\prox_{\tau \phi}(x) = \frac{\|x\| - r\tau}{1+\alpha \tau} \frac{x}{\|x\|}$.
\end{proof}

\bibliographystyle{siamplain}
\bibliography{references}

\end{document}